\documentclass[a4paper]{amsart}

\usepackage{a4wide}
\usepackage{bbm}
\usepackage{amssymb,amscd}
\usepackage{MnSymbol}

\usepackage{pstricks,epsfig}
\usepackage[totalwidth=14cm,totalheight=24cm]{geometry}
\usepackage{graphicx}
\usepackage{psfrag}
\usepackage{colortbl}
\usepackage[normalem]{ulem}

\usepackage[all]{xy}

\newcounter{notes}


\usepackage{mathrsfs,bbm}

\DeclareFontFamily{OT1}{pzc}{}
\DeclareFontShape{OT1}{pzc}{m}{it}{<-> s * [1.100] pzcmi7t}{}
\DeclareMathAlphabet{\mathpzc}{OT1}{pzc}{m}{it}

\newtheorem{cor}{Corollary}[section]
\newtheorem{theorem}[cor]{Theorem}
\newtheorem{prop}[cor]{Proposition}
\newtheorem{lemma}[cor]{Lemma}
\newtheorem{sublemma}[cor]{Sublemma}

\newtheorem{maintheorem}{Theorem}
\newenvironment{mainthm}[1]{\begin{maintheorem}}{\end{maintheorem}}

\theoremstyle{definition}
\newtheorem{defi}[cor]{Definition}
\theoremstyle{remark}
\newtheorem{remark}[cor]{Remark}

\newtheorem{question}[cor]{Question}

\newtheorem*{notation}{Notation}

\def\IM{\mathfrak{I}}
\def\Hom{\mathrm{Hom}}
\def\End{\mathrm{End}}

\newcommand{\coh}{\mathcal{H}}
\newcommand{\cob}{\mathcal{B}}

\newcommand{\coc}{\mathcal{Z}}
\newcommand{\e}{\varepsilon}

\newcommand{\cT}{{\mathcal T}}
\newcommand{\cQ}{{\mathpzc Q}}

\newcommand{\cX}{{\mathpzc X}}
\def\bm#1{\text{\boldmath$#1$}}
\newcommand{\da}{\omega_h}

\def\hra{\hookrightarrow}
\def\Hod{\mathpzc{H}}

\newcommand{\F}{\mathsf{F}}
\newcommand{\Ae}{\mathsf{A}}
\newcommand{\Se}{\mathsf{S}}

\newcommand{\Mon}{\mathsf{Mon}}
\newcommand{\Data}{\mathpzc{D}}

\newcommand{\cod}{\mathrm{cod}}
\newcommand{\Cod}{\mathpzc{Cod}}
\newcommand{\cQC}{\mathpzc{Q}_{\,\CC}}
\newcommand{\Codtr}{\mathpzc{Cod}_{\!\! tr}}
\newcommand{\Id}{\mathbbm{1}}

\newcommand{\Imm}{\mathcal{I}}
\newcommand{\MImm}{\mathpzc{M}\mathcal{I}}
\newcommand{\AImm}{\widetilde{\Imm}}
\newcommand{\AMImm}{\widetilde{\MImm}}

\newcommand{\cML}{{\mathcal M\mathcal L}}

\newcommand{\cQF}{{\mathcal Q\mathcal F}}

\newcommand{\C}{{\mathbb C}}
\newcommand{\CC}{{\mathbb{C}}}

\newcommand{\HH}{{\mathbb H}}

\newcommand{\N}{{\mathbb N}}
\newcommand{\R}{{\mathbb R}}

\newcommand{\psl}{\mathfrak{sl}}
\newcommand{\SL}{\mathrm{SL}}
\newcommand{\PSL}{\mathrm{PSL}}

\newcommand{\Map}{\mathcal{C}}
\newcommand{\AMap}{\widetilde{\Map}}

\newcommand{\tr}{\mbox{\rm tr}}

\newcommand{\grad}{\operatorname{grad}}

\newcommand{\rar}{\rightarrow}
\newcommand{\lra}{\longrightarrow}

\def\rar{\rightarrow}
\def\pa{\partial}
\def\RR{\mathbb{R}}

\def\Hess{\mathrm{Hess}}
\def\Div{\mathrm{div}}

\def\ol#1{\overline{#1}}

\def\En{\mathbb{I}}

\def\Htype#1{\check{#1}}
\def\kill{\xi}
\def\Hkill{{\Htype{\kill}}}
\def\HX{{\Htype{X}}}
\def\HY{{\Htype{Y}}}
\def\dotr{\varsigma}

\newcommand{\EE}{\mathbb{E}}
\newcommand{\DD}{\mathbb{D}}

\setlength{\parindent}{0cm}


\makeatletter

\makeatother


\sloppy

\begin{document}

\title{Minimizing immersions of a hyperbolic surface\\ in a hyperbolic $3$-manifold}
\author{Francesco Bonsante}
\address{Universit\`a degli Studi di Pavia\\
Via Ferrata, 1\\
27100 Pavia, Italy}
\email{francesco.bonsante@unipv.it}
\thanks{F.B.~was partially supported by the
FIRB 2010 project ``Low-dimensional geometry and topology'' (RBFR10GHHH 003) granted by MIUR}
\author{Gabriele Mondello}
\address{``Sapienza'' Universit\`a di Roma - Dipartimento di Matematica
``Guido Castelnuovo'' \\
piazzale Aldo Moro 5 \\
00185 Roma, Italy}
\email{mondello@mat.uniroma1.it}
\thanks{G.M.~was partially supported by the
FIRB 2010 project ``Low-dimensional geometry and topology'' (RBFR10GHHH 003) granted by MIUR and by GNSAGA 
national research group at INdAM}
\author{Jean-Marc Schlenker}
\thanks{JMS is partially supported by UL IRP grant NeoGeo and FNR grants INTER/ANR/15/11211745 and OPEN/16/11405402. JMS also acknowledge support from U.S. National Science Foundation grants DMS-1107452, 1107263, 1107367 ``RNMS: GEometric structures And Representation varieties'' (the GEAR Network).}
\address{Mathematics Research Unit, University of Luxembourg,
Maison du nombre, 6 avenue de la Fonte, L-4364 Esch-sur-Alzette, Luxembourg}
\email{jean-marc.schlenker@uni.lu}

\date{\today (v1.1)}

\begin{abstract}
Let $(S,h)$ be a closed hyperbolic surface and $M$ be a quasi-Fuchsian 3-manifold. We consider incompressible maps from $S$ to $M$ that are critical points of  an energy
functional $F$
 which is homogeneous of degree $1$. These ``minimizing'' maps are solutions of a non-linear elliptic equation, and reminiscent of harmonic maps -- but when the target is Fuchsian, minimizing maps are minimal Lagrangian diffeomorphisms to the totally geodesic surface in $M$. We prove the uniqueness of smooth minimizing maps 
 from $(S,h)$ to $M$
 in a given homotopy class. When $(S,h)$ is fixed, smooth minimizing maps from $(S,h)$ are described by a simple holomorphic data on $S$: a complex self-adjoint Codazzi tensor of determinant $1$. The space of admissible data is smooth and naturally equipped with a complex structure, for which the monodromy map taking a data to the holonomy representation of the image is holomorphic. Minimizing maps are in this way reminiscent of shear-bend coordinates, with the complexification of $F$ analoguous to the complex length.
\end{abstract}

\maketitle

\tableofcontents

\section{Introduction and results}

The aim of this paper is to study ``minimizing immersions'' of a compact hyperbolic
surface inside germs of hyperbolic 3-manifolds, which are defined as critical points
of a suitable $1$-homogeneous functional $F$.

On one hand, such immersions generalize the notion of minimal Lagrangian maps
between hyperbolic surfaces (which correspond to the target being a Fuchsian hyperbolic
3-manifold). On the other hand, as the target varies in the quasi-Fuchsian space,
a very natural complexification of $F$ can be used to define a holomorphic function
that looks like a ``smooth version'' of the complex length of a measured lamination.

\subsection{Minimal Lagrangian diffeomorphisms between hyperbolic surfaces}

Consider a compact, connected, oriented surface $S$ of genus at least two.

Given two hyperbolic metrics on $S$, 
a central problem in Teichm\"uller theory is to find the ``best''
diffeomorphism of $S$ isotopic to the identity. 
Usually such a diffeomorphism is the unique map
that minimizes a suitable functional defined in terms of
the two hyperbolic metrics.

One remarkable example is that of harmonic maps, which
play an important role in Teichm\"uller theory (see 
\cite{eells-sampson:harmonic} and \cite{wolf:teichmuller}).

Another example is given by minimal Lagrangian maps,
which are quite relevant both for Teichm\"uller theory and for 3-dimensional manifolds of constant curvature, see e.g. \cite{L5,L6,cyclic,cyclic2}.

\begin{defi}[Minimal Lagrangian maps]\label{def:minimal-lag}
A {\it minimal Lagrangian} map $m:(S,h)\to (S,h^\star)$
between hyperbolic surfaces is an
area-preserving diffeomorphism such that
its graph in $(S\times S, h\oplus h^\star)$ is minimal.
\end{defi}


Here we present another variational characterization of minimal Lagrangian maps between hyperbolic surfaces.

Let $f:(S,h)\to (S,h^\star)$ 
be a smooth\footnote{In this paper we use the word ``smooth'' to mean ``$C^\infty$''.} map between hyperbolic surfaces. 
There is a unique non-negative $h$-self-adjoint operator $b:TS\to TS$
such that $f^*h^\star(\bullet,\bullet)=h(b\bullet,b\bullet)$.
We define the functional $F:C^\infty(S,S)\to \RR$ 
on the space of $C^\infty$ maps from $S$ to $S$ as
\[
F(f):=\int_S \tr(b)\,\da
\]
where $\da$ is the area form on $S$ associated to $h$.

The following statement is almost implicit in some variational formulas in \cite{cyclic,cyclic2}, and also similar to results of Trapani and Valli \cite{trapani-valli}.

\begin{lemma}[Variational characterization of minimal Lagrangian maps]\label{lm:F}
Let $f:(S,h)\to (S,h^*)$ be a smooth diffeomorphism between
hyperbolic surfaces. 
Then $f$ is minimal Lagrangian if and only if it is a critical point of $F$. 
\end{lemma}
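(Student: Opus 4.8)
\emph{Strategy.} Since $f$ is a diffeomorphism, the pullback metric $h':=f^*h^*$ has constant curvature $-1$ and $f$ is an isometry $(S,h')\to(S,h^*)$. Composing with a target isometry carries minimal Lagrangian maps to minimal Lagrangian maps (it preserves the area-preserving condition and the minimality of the graph in the product), so $f$ is minimal Lagrangian if and only if $\mathrm{id}\colon(S,h)\to(S,h')$ is; moreover $F(f)$ depends only on $h'$. I would therefore study critical points of $F$ along variations by diffeomorphisms, which amount to variations $h'_t=f_t^*h^*$ with $\dot h'=\mathcal L_V h'$ for $V$ an arbitrary vector field on $S$, and compare the resulting Euler--Lagrange equation with the standard criterion (cf.\ \cite{L5,L6,cyclic,cyclic2}) for $\mathrm{id}\colon(S,h)\to(S,h')$ to be minimal Lagrangian: the positive $h$-self-adjoint $b$ with $h'=h(b\cdot,b\cdot)$ should satisfy $d^{\nabla^h}b=0$ (Codazzi) and $\det b=1$.

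\emph{First variation.} Set $a:=b^2$, so $h'=h(a\cdot,\cdot)$ and $\dot a$ is determined by $h(\dot a\cdot,\cdot)=\mathcal L_V h'=\nabla_V h'+h'(\nabla_\cdot V,\cdot)+h'(\cdot,\nabla_\cdot V)$, all connections being $\nabla=\nabla^h$. Differentiating $b^2=a$ and using that $b$ is invertible yields the pointwise identity $\tr\dot b=\tfrac12\tr(b^{-1}\dot a)$. Substituting the formula for $\dot a$ and simplifying with $b^{-1}a=b$, the cyclicity of the trace, and the $h$-self-adjointness of $b$ and of $\nabla_X b$, one gets $\tr\dot b=d(\tr b)(V)+\tr(b\circ\nabla V)$. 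Integrating over the closed surface $S$ against $\da$ and integrating the last term by parts (no boundary terms) gives
\[
\frac{d}{dt}\Big|_{t=0}F(f_t)=\int_S\big\langle\,d(\tr b)-\Div\beta\,,\,V\,\big\rangle\,\da,
\]
where $\beta:=h(b\cdot,\cdot)$ and $\Div\beta$ is the $h$-divergence of the symmetric $2$-tensor $\beta$. Hence $f$ is a critical point of $F$ if and only if $\Div\beta=d(\tr b)$ as $1$-forms on $S$.

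\emph{The Euler--Lagrange equation is the Codazzi equation.} On a surface, for a self-adjoint $b$, a short computation in an $h$-orthonormal frame $(e_1,e_2)$ shows that the $1$-form $\Div\beta-d(\tr b)$ is, up to the Hodge star and sign, dual to the vector $(d^{\nabla}b)(e_1,e_2)$; since a $TS$-valued $2$-form on a surface is determined by its value on $(e_1,e_2)$, this gives $\Div\beta=d(\tr b)\Longleftrightarrow d^{\nabla}b=0$. So $f$ is a critical point of $F$ if and only if $b$ is Codazzi.

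\emph{Recovering $\det b=1$, and conclusion.} It remains to see that, in the present situation, being Codazzi forces $\det b=1$; this is where hyperbolicity enters and is the delicate point. I would invoke the Gauss-type identity (implicit in the variational formulas of \cite{cyclic,cyclic2}, and checkable directly in a conformal coordinate for $h$ once the Codazzi equation is written out): for a positive $h$-self-adjoint Codazzi tensor $b$ on a surface, the metric $h(b\cdot,b\cdot)$ has curvature $K_h/\det b$. Since $h'=f^*h^*$ and $h$ both have curvature $-1$, this reads $-1=-1/\det b$, i.e.\ $\det b\equiv1$. Putting the pieces together: if $f$ is minimal Lagrangian then $b$ is Codazzi (with $\det b=1$), so $f$ is a critical point of $F$ by the first-variation formula; conversely, if $f$ is a critical point then $b$ is Codazzi, hence $\det b=1$, hence $\mathrm{id}\colon(S,h)\to(S,h')$, and therefore $f$ itself, is minimal Lagrangian. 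I expect the two points needing the most care to be the bookkeeping in the first variation (the nonlinearity $b=\sqrt a$ forces one to solve the Sylvester equation $\dot b\,b+b\,\dot b=\dot a$ rather than differentiate $b$ directly) and the observation that the Euler--Lagrange equation by itself only produces the Codazzi equation, the constraint $\det b=1$ being supplied separately by the curvature identity above.
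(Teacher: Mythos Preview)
Your argument is correct and uses the same two ingredients as the paper: (1) the first-variation computation shows that criticality of $F$ is equivalent to $b$ being Codazzi, and (2) the curvature identity $K_{h(b\cdot,b\cdot)}=K_h/\det b$ for a positive self-adjoint Codazzi $b$, together with the hyperbolicity of both metrics, then forces $\det b=1$. The only packaging difference is that the paper derives the result as the specialization $a=0$ of its first-variation formula for immersions into $\HH^3$ (Proposition~\ref{pr:extr} and Corollary~\ref{cr:extr}, applied in the proof of Lemma~\ref{lm:basic}), whereas you carry out the surface-to-surface computation directly; your identity $\Div\beta=d(\tr b)\Leftrightarrow d^\nabla b=0$ is exactly the tangential part of the paper's Euler--Lagrange equations, and your curvature step is precisely the Gauss-type identity invoked in the proof of Corollary~\ref{cr:extr}.
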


One of the key motivations here is to extend the notion of minimal Lagrangian diffeomorphism to smooth maps from a hyperbolic surface to a hyperbolic 3-manifold.

\subsection{Minimizing immersions of surfaces in 3-manifolds}
\label{ssc:minimizing}
Suppose now that the target surface is replaced by a hyperbolic 3-manifold $M$, which we assume to be complete and with
injectivity radius positively bounded from below.

In a given homotopy class $[f]$ of embeddings of $S$ into $M$
that induce an injective homomorphism $f_*:\pi_1(S)\rar\pi_1(M)$,
there is still a unique harmonic map from $(S,h)$ to $M$, but its relation to the moduli space of hyperbolic structures on $M$ is not direct --- for instance, the complex structure on this moduli is more readily visible if one fixes on $S$ a {\em measured lamination} rather than a metric, and considers shear-bend coordinates associated to it, see \cite{bonahon-toulouse}.

The analog of minimal Lagrangian maps for embeddings of a hyperbolic surface in a hyperbolic 3-manifold is not clear, if one follows 
Definition \ref{def:minimal-lag}. On the other hand, it is possible
to adapt the variational approach
suggested by Lemma \ref{lm:F}.\\

Let $(S,h)$ be a hyperbolic surface and $(M,h_M)$ be a hyperbolic 3-manifold
and let $f:S\to M$ be a smooth map.
Again, there exists a unique non-negative $h$-self-adjoint operator
$b:TS\to TS$ such that $f^*h_M(\bullet,\bullet)=h(b\bullet,b\bullet)$.
We define the functional $F:C^\infty(S,M)\to\RR$ as
\[
F(f):=\int_S \tr(b)\,\da~.
\]
Lemma \ref{lm:F} then suggests the following definition.

\begin{defi}[Minimizing maps]\label{df:minimizing}
A smooth map $f:S\to M$ from a hyperbolic surface
to a hyperbolic 3-manifold
is {\em minimizing} 
if $F$ achieves a local minimum at $f$.
\end{defi}

We will see that minimizing immersions have a number of pleasant properties. Given $(S,h)$, $M$ and the homotopy class 
$[f]$ of a map $f:S\to M$, there is at most one minimizing immersion from $(S,h)$ to $M$ in $[f]$. Moreover, the moduli space of minimizing immersions of $(S,h)$ in hyperbolic 3-manifolds has a complex structure, for which the map sending a minimizing immersion to the holonomy representation of the target manifold is holomorphic. 

\subsection{Definition and notations}

We now fix the background hyperbolic metric $h$ on $S$
and let $\tilde{h}$ be the pull-back of $h$ to the universal
cover $\widetilde{S}\rightarrow S$.

Rather than considering immersions of $S$ into a hyperbolic 3-dimensional manifold $M$, it is often more convenient to consider equivariant immersions of $\widetilde{S}$ into $\HH^3$, so that deformations of $M$ correspond to deformations of the representation.

\begin{defi}[Equivariant immersions]
An {\it immersion of $S$ in a germ of a hyperbolic $3$-manifold} is a couple $(\tilde{f},\rho)$, where  $\rho:\pi_1(S)\rightarrow \PSL_2(\CC)$ is a representation and $\tilde{f}:\widetilde{S}\rightarrow\HH^3$ is a $\rho$-equivariant smooth immersion.
The representation $\rho$ is called the {\it{monodromy}} of the map $\tilde{f}$.
\end{defi}

The set $\AImm$ of smooth immersions of $S$ in a germ of hyperbolic $3$-manifold is a subset of $\mathrm{Hom}(\pi_1(S),\PSL_2(\CC))\times C^\infty(\widetilde{S},\HH^3)$ and so it inherits a subspace topology.

Note that $\PSL_2(\CC)$ acts on $\AImm$ as $g\cdot (\tilde{f},\rho):=(g\circ\tilde{f},g\rho g^{-1})$.
We denote by $[\tilde{f},\rho]$ the orbit of $(\tilde{f},\rho)$ under this action and by $\Imm$ the quotient $\PSL_2(\CC)\backslash\AImm$, which is thus endowed with the quotient topology.

We also say that  the family of equivalence classes $[\tilde{f}_t,\rho_t]_{t\in I}$ is smooth if it can be represented by a smooth family of immersions.

\begin{remark}[Equivariant immersions and classes of immersions]
In order to explain the above definition, consider two immersions
$f_1:S\rar M_1$ and $f_2:S\rar M_2$ inside two (not necessarily compact) hyperbolic $3$-manifolds $M_1$ and $M_2$. 
We declare the two immersions equivalent if there exists a third immersion $f_3:S\rar M_3$ into a hyperbolic $3$-manifold $M_3$ and local isometries $i_1:M_3\rar M_1$ and $i_2:M_3\rar M_2$
such that $f_1=f_3\circ i_1$ and $f_2=f_3\circ i_2$.
Lifting such an  immersion $f:S\rar M$ to the universal covering $\widetilde{M}$ and composing with a developing map $\mathrm{dev}:\widetilde{M}\to\HH^3$, one gets an immersion
$\tilde{f}:\widetilde{S}\rar \HH^3$, which is equivariant under $\pi_1(S)$ that acts on $\widetilde{S}$ by deck transformations and on $\HH^3$ via the 
representation $\rho:\pi_1(S)\rar\PSL_2(\CC)$ obtained by composing the holonomy of $M$ with the homomorphism $f_*:\pi_1(S)\to\pi_1(M)$.
It is easy to see that equivalent immersions give rise
to couples $(\tilde{f},\rho)$ in the same $\PSL_2(\CC)$-orbit.

Vice versa, given a couple $(\tilde{f},\rho)$,
there exists $\epsilon>0$ such that the $\rho$-equivariant 
immersion $\tilde{f}:\widetilde{S}\times\{0\}\cong\widetilde{S}\rar\HH^3$ can be extended to an immersion
$\hat{f}:\widetilde{U}=\widetilde{S}\times (-\epsilon,\epsilon)\rar\HH^3$
in such a way that the restriction to each segment $\{\tilde{p}\}\times (-\epsilon,\epsilon)$ is a geodesic of unit speed normal to
$d\tilde{f}_{\tilde{p}}(T_{\tilde{p}}\widetilde{S})$ at $\tilde{f}(\tilde{p})$.
Pulling back the metric of $\HH^3$ via $\hat{f}$, we obtain
a hyperbolic structure on $\widetilde{U}$. By $\rho$-equivariance,
such $\hat{f}$ descends to an immersion of $S$
inside a hyperbolic $3$-manifold $U=S\times(-\epsilon,\epsilon)$.
Clearly, if $(\tilde{f}_1,\rho_1)$ and $(\tilde{f}_2,\rho_2)$
are in the same $\PSL_2(\CC)$-orbit, then they give rise
to the same equivalence class of immersions.
\end{remark}



Given the class $[\tilde{f},\rho]\in \AImm$ of an immersion of $S$ in a germ of hyperbolic manifolds. We denote by $\tilde{a}:T\widetilde{S}\to T\widetilde{S}$ the shape operator of $\tilde{f}$, which is
then self-adjoint with respect to the first fundamental
form of the immersion. It is immediate
to see by $\rho$-equivariance of $\tilde{f}$ that $\tilde{a}$
descends to an operator $a:TS\to TS$.
Thus, to every $[\tilde{f},\rho]$ we can associate a pair $(b,a)$
of bundle morphisms $b,a:TS\to TS$, where $b$ is the operator
defined in Section \ref{ssc:minimizing}.\\

Consider now the locus $\AMImm$ the locus of minimizing immersions inside $\AImm$ and let $\MImm$ be its quotient by $\PSL_2(\CC)$.

\begin{defi}[Immersion datum associated to a minimizing immersion]
For every $[\tilde{f},\rho]$ in $\MImm$ we
define the  $1$-form on $S$ with values on the bundle $T_{\C}S:=\C\otimes_{\R}TS$
$$ \Phi(\tilde{f},\rho) := b-iJba~, $$
where $J$ is the almost-complex structure on $S$ associated to $h$.

  Such $\Phi(\tilde{f},\rho)$ is independent of the chosen representative
  in $[\tilde{f},\rho]$. 
\end{defi}

Notice that $\Phi(\tilde{f},\rho)$ can be uniquely extended to a complex-linear endomorphism of the bundle $T_\C S$. We will often consider such an extension and we denote it by the same symbol.

We will show in Section \ref{sec:euler-lagrange} that minimizing immersions of $(S,h)$ into a (germ of a) hyperbolic manifold can be described in terms of their immersion data.
The equations satisfied by
such minimizing immersion data describe a complex space
$\Data$ defined below.

\begin{defi}[Space of minimizing immersion data]\label{def:D}
Fix a hyperbolic surface $(S,h)$.
The {\it{space of minimizing immersion data}} $\Data$ is the space of smooth $h$-self-adjoint complex-linear operators $\phi:T_\C S\to T_\CC S$
whose real part $\Re(\phi)$ is positive and that
satisfy $d^\nabla \phi=0$ and $\det\phi=1$.
\end{defi}

Here $\nabla$ is the Levi-Civita connection of $h$, which can be extended as a connection on the complex bundle $T_{\C}S$ by $\C$-linearity.
The operator $d^\nabla$ is the exterior derivative on $TS$-valued $1$-forms defined through $\nabla$, so that $d^\nabla\phi$ is a $2$-form on $S$ with values on $T_{\C}S$:
more explicitly, if $v,w$ are two vector fields on $S$
we have
$$ (d^\nabla\phi)(v,w) = \nabla_v (\phi(w)) - \nabla_w(\phi(v)) - \phi([v,w])~. $$

Finally, $h$ can be extended to a complex bilinear form on $T_{\C} S$, still denoted by $h$, so that $\phi$ is $h$-self-adjoint, i.e.~
$h(\phi(v), w)=h(v,\phi(w))$ for all vector fields $v,w$ on $S$.

\subsection{Main results}

%
%
The first main result of this paper consists of a characterization
of the immersion data corresponding to minimizing immersions.

\begin{mainthm}{A}[Immersion data of minimizing immersions]\label{main:immersion-data}
Let $(S,h)$ be a hyperbolic surface.
  \begin{itemize}
  \item[(i)] 
  For every class $[\tilde f,\rho]\in \MImm$ of minimizing immersions, 
  the immersion datum $\Phi([\tilde f,\rho])$ belongs to $\Data$.
  Moreover, each $\phi\in \Data$ is obtained from a unique 
  minimizing class $[\tilde f,\rho]\in \MImm$.
  \item[(ii)]
  The map $\Phi:\MImm=\{[\tilde{f},\rho]\}\rightarrow \Data$ that associates to a minimizing immersion $[\tilde{f},\rho]$ its immersion datum is a homeomorphism.
  \end{itemize}
\end{mainthm}

The correspondence established in Theorem \ref{main:immersion-data} is in fact smooth in the following sense:
\begin{itemize}
\item if a family $(\tilde{f}_t,\rho_t)_{t\in I}$ in $\AMImm$ depends $C^\infty$ on $t$, then the corresponding embedding data $(\phi_t)_{t\in I}$ depends $C^\infty$ on $t$ too;
\item if $(\phi_t)_{t\in I}$ is a $C^\infty$ family of embedding data in $\Data$ and $(\tilde{f}_0,\rho_0)$ corresponds to $\phi_0$, then $(\tilde{f}_0,\rho_0)$ can be  deformed  to a $C^\infty$ family $(\tilde{f}_t,\rho_t)_{t\in I}$ in $\AMImm$ with embedding data $(\phi_t)_{t\in I}$.
\end{itemize}

%

In our second main result we show that the space of immersion data of minimizing maps has a natural structure of complex manifold.\\

Denote by $\cQ$ the space of $J$-holomorphic quadratic differentials on $S$, viewed as a real vector space, and by $\cQ_{\CC}:=\C\otimes_{\R}\cQ$ the complexification of $\cQ$.

We now consider more closely the space of immersion data on $(S,h)$. To do this, we use the decomposition given in Proposition \ref{pr:decomposition}: 
for every smooth $\phi:T_\C S\to T_\C S$ which is self-adjoint and Codazzi, there exists a unique triple $(u,q,q')$, where $u:S\to \C$ is a smooth function and $q,q'\in \cQ$ such that
$$ \phi = (u\Id-\Hess(u))+(b_q+ib_{q'})\,, $$
where
\begin{itemize}
\item 
$\Id$ is the identity operator;
\item 
$\Hess(u)=\nabla(\grad u):TS\to TS$ is the bundle morphism associated (through the Riemannian metric $h$) to the covariant Hessian of $u$;
\item 
$b_q:TS\to TS$ is the bundle morphism associated (through $h$) to the bilinear form $\Re(q)$ on $TS$.
\end{itemize}
In other words, the complex vector space  $\Cod$ of smooth
$d^\nabla$-closed $h$-self-adjoint $1$-forms with values in $T_{\C}S$ 
(endowed with the smooth topology) splits as
\[
   \Cod=C^\infty(S,\C)\oplus\cQ_{\CC}
\]
We will denote by $Q:\Cod\to\cQ_{\CC}$ the projection to $\cQ_{\CC}$ induced by this splitting.

\begin{mainthm}{B}[Manifold structure on the space of minimizing maps]\label{main:manifold-structure}
Let $(S,h)$ be a hyperbolic surface.
The space $\Data$ of immersion data is a complex submanifold of $\Cod$ of complex dimension $6g-6$.
Moreover, the restriction of $Q$ over $\Data$ is a local biholomorphism.
\end{mainthm}
%
%
%

In our third result we show that the monodromy map
that sends a minimizing immersion datum $\phi$ to the
conjugacy class $[\rho_\phi]$ of the monodromy
of the corresponding germ of hyperbolic 3-manifold is a biholomorphism
onto its open image.

\begin{defi}[Space of non-elementary $\SL_2(\CC)$-representations]
The space $\cX$ of non-elementary representation
is the locus in $\Hom(\pi_1(S),\PSL_2(\CC))/\PSL_2(\CC)$ of conjugacy classes of representations without fixed points in $\ol{\HH}^3$.
\end{defi}


\begin{mainthm}{C}[Monodromy map is holomorphic]\label{main:monodromy-holomorphic}
Let $(S,h)$ be a hyperbolic surface.
For every $\phi\in\Data$, the conjugacy class $[\rho_\phi]$ is non-elementary.
Moreover, the map $\Mon:\Data\rightarrow\cX$ that sends $\phi$ to $[\rho_\phi]$ is a biholomorphism onto an open subset of $\cX$ that contains the Fuchsian locus.
\end{mainthm}

In view of Theorem \ref{main:monodromy-holomorphic}, we
define a functional $\F:\cX\rar\RR_{\geq 0}$ on the representation space $\cX$ as
\[
\F([\rho]):=\inf \{F(\tilde{f})\,|\,[\tilde{f},\rho]\in\Imm\}.
\]

In our fourth result we show that,
in view of Theorem \ref{main:monodromy-holomorphic},
for every hyperbolic metric $h$ on $S$
there exists a suitable open subset of $\cX$
that contains the Fuchsian locus and on which
the functional $\F$
is the real part of a holomorphic function.

\begin{mainthm}{D}[Complexification of the functional]\label{main:complexified-F}
Let $(S,h)$ be a hyperbolic surface.
For every $\phi\in \Data$, we have $\F([\rho_\phi])=\Re\int_S \tr(\phi)\da$.
As a consequence, the restriction of $\F$ to the open subset $\Mon(\Data)$ is the real part of the function $\F_{\CC}:\Mon(\Data)\rightarrow\CC$ defined as
$$ \F_{\CC}([\rho]):=\int_S \tr(\Mon^{-1}(\rho))\da\,,$$
which is holomorphic.
\end{mainthm}

The conjectural link between the holomorphic function $\F_\CC$
associated to a hyperbolic metric $h$ and the complex length
associated to a measured lamination $\lambda$ on $S$
is discussed in Section \ref{ssc:complex-length}.

\subsection{Structure of the paper}
In Section \ref{sec:functional}
we define the main objects of investigation, such as
equivariant maps, immersion data and
the $1$-energy functional $F$. Then we discuss
first-order deformations of equivariant maps,
we prove convexity of $F$ along geodesic displacements
and from that we deduce
uniqueness of smooth minimizing immersions.

In Section \ref{sec:euler-lagrange} we compute
the first-order variation of $F$ and we deduce
Euler-Lagrange equations for minimizing immersions,
thus proving Theorem \ref{main:immersion-data}.
Then
we obtain Theorem \ref{main:manifold-structure} through an
implicit function theorem argument.

In Section \ref{sec:monodromy} we resume the
deformation theory of equivariant immersions
developed in Section \ref{ssc:infdef}, and we rephrase
it in terms of the bundle of local Killing vector fields.
Using such rephrasing, we prove Theorem \ref{main:monodromy-holomorphic} and its immediate consequence, namely
Theorem \ref{main:complexified-F}.

In Section \ref{sec:problems} we list some open problems
and perspectives that came up naturally when working at the present
article. 

Finally, we collect in Appendix \ref{sec:app} some facts
on $1$-Schatten norms of matrices and of families of matrices,
that are used in Section \ref{sec:functional}.




\section{A $1$-energy functional}\label{sec:functional}

In this section we introduce certain the space of equivariant
maps and immersions from the universal cover of the surface $S$
to $\HH^3$ and we define the $1$-energy of such maps.
Then we study the deformation theory of such equivariant maps
and we show that the $1$-energy has a convexity property
with respect to geodesic displacements.
We conclude by showing that equivariant immersions (whenever they exist)
are the unique critical point for the $1$-energy functional among maps with the same
monodromy, and in fact
they are a point of absolute minimum with non-elementary reductive monodromy.
We also show that in the Fuchsian case minimizing maps are exactly
minimal Lagrangian maps between hyperbolic surfaces.

\subsection{Setting}
Let $S$ the a compact, connected, oriented surface with $\chi(S)<0$.
Fix a universal cover $\pi:\widetilde{S}\rightarrow S$ and an identification
between $\pi_1(S)$ and the group $\mathrm{Aut}(\pi)$ of the deck transformations of $\pi$.

\begin{notation}
We will use the symbol $\gamma\in \mathrm{Aut}(\pi)\cong\pi_1(S)$ to denote
an automorphism $\gamma:\widetilde{S}\rar\widetilde{S}$ over the covering space $\pi$,
and by $\gamma_*$ the push-forward operator on vector fields
or other tensors on $\widetilde{S}$ induced by the diffeomorphism $\gamma$.
\end{notation}

Fix also a hyperbolic metric $h$ on $S$ and let $\tilde{h}$
be its pull-back on $\widetilde{S}$, so that $(\widetilde{S},\tilde{h})$ is isometric
to the hyperbolic plane $\HH^2$ and
$\pi_1(S)$ acts on
$(\widetilde{S},\tilde{h})$ via hyperbolic isometries.

\subsection{Maps and immersions}

Given a complete hyperbolic $3$-manifold $M$, we can identify its universal cover $\widetilde{M}$
to $\HH^3$ and the group of orientation-preserving isometries of $\widetilde{M}$ to $\PSL_2(\CC)$.
Note that a continuous map $f:S\rightarrow M$ can be lifted to a 
$\pi_1(S)$-equivariant continuous 
map $\tilde{f}:\widetilde{S}\rar\widetilde{M}=\HH^3$, where $\pi_1(S)$
acts on $\widetilde{S}$ by deck-transformations and on $\HH^3$
through a representation $\rho:\pi_1(S)\rightarrow\PSL_2(\CC)$.

The advantage of the following definition is in its flexibility, since
looking at equivariant maps ``allows $M$ to vary''.

\begin{defi}[Equivariant maps]
An {\it equivariant map} from $\widetilde{S}$ to $\HH^3$
is a couple $(\tilde{f},\rho)$, 
where $\rho:\pi_1(S)\rightarrow\PSL_2(\CC)$
is a representation and
$\tilde{f}:\widetilde{S}\rightarrow\HH^3$
is a $\pi_1(S)$-equivariant smooth map.
\end{defi}


If $(\tilde{f}, \rho)$ is an equivariant map, the representation $\rho$ is called the {\it{monodromy}} of $\tilde{f}$. 
Notice that $\rho$ is determined by $\tilde{f}$ provided that the rank of $d\tilde{f}$ is $2$ at least at one point $\tilde{p}\in\widetilde{S}$.

The space $\AMap$ of smooth equivariant maps is a subset
of $\Hom(\pi_1(S),\PSL_2(\CC))\times C^\infty(\widetilde{S},\HH^3)$, and so it inherits a natural topology.
The group $\PSL_2(\CC)$ of orientation-preserving isometries of $\HH^3$
acts on $\AMap$ as
$g\cdot(\tilde{f},\rho):=(g\circ\tilde{f},g\rho g^{-1})$.
Let $\Map:=\PSL_2(\CC)\backslash\AMap$ and
denote by $[\tilde{f},\rho]$ the class of smooth equivariant maps
up to this $\PSL_2(\CC)$-action.\\


We recall from the introduction that $\cX$ denotes 
the space of $\PSL_2(\CC)$-conjugacy classes of non-elementary
representations $\pi_1(S)\rar\PSL_2(\CC)$.
The following fact is well-known.

\begin{lemma}[Smoothness of the representation space]\label{lemma:rep-smooth}
The space $\cX$ is a complex manifold of dimension $-3\chi(S)$.
\end{lemma}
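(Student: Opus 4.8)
\textbf{Proof plan for Lemma \ref{lemma:rep-smooth}.}
The plan is to use the standard deformation theory of the character variety via group cohomology. Let me set $G = \PSL_2(\CC)$, which is a connected complex Lie group of complex dimension $3$, and write $\Gamma = \pi_1(S)$, a finitely generated group with $\chi(S) = 2 - 2g < 0$, so that $\Gamma$ is free if $S$ has boundary and, in the closed case, is a surface group of genus $g \geq 2$. The representation scheme $R(\Gamma, G) = \Hom(\Gamma, G)$ sits inside $G^{n}$ (for a chosen generating set of $n$ elements) cut out by the relations, and the tangent space to $R(\Gamma, G)$ at $\rho$ is the space of $1$-cocycles $Z^1(\Gamma; \mathfrak{g}_{\mathrm{Ad}\rho})$, where $\mathfrak{g} = \mathfrak{sl}_2(\CC)$ with the adjoint action twisted by $\rho$. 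The Zariski tangent space to the $G$-orbit of $\rho$ is the space of coboundaries $B^1(\Gamma; \mathfrak{g}_{\mathrm{Ad}\rho})$.

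The key step is to show that for non-elementary $\rho$, the representation $\rho$ is a \emph{smooth point} of $R(\Gamma, G)$ and the $G$-action near $\rho$ is \emph{free modulo center} with closed orbits, so the quotient $\cX$ is a manifold near $[\rho]$ of dimension $\dim_{\CC} Z^1 - \dim_{\CC} B^1 = \dim_{\CC} H^1(\Gamma; \mathfrak{g}_{\mathrm{Ad}\rho})$. First I would recall that a non-elementary representation (no fixed point in $\ol{\HH}^3$, equivalently the image is not virtually abelian and does not fix a point at infinity) is irreducible as an $\SL_2(\CC)$-representation after lifting, hence $H^0(\Gamma; \mathfrak{g}_{\mathrm{Ad}\rho}) = \mathfrak{g}^{\mathrm{Ad}\rho} = 0$; this gives that the orbit map $G \to R(\Gamma,G)$ has discrete (in fact trivial, since $G$ is centerless here) stabilizer, so $\dim_\CC B^1 = \dim_\CC \mathfrak{g} = 3$. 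Then I would invoke Weil's rigidity/smoothness criterion: the obstruction to smoothness of $R(\Gamma, G)$ at $\rho$ lies in $H^2(\Gamma; \mathfrak{g}_{\mathrm{Ad}\rho})$, and for a closed surface group Poincaré duality gives $H^2(\Gamma; \mathfrak{g}_{\mathrm{Ad}\rho}) \cong H^0(\Gamma; \mathfrak{g}_{\mathrm{Ad}\rho})^* = 0$ (the adjoint representation being self-dual via the Killing form), while for a free group $H^2$ vanishes trivially. Hence $R(\Gamma, G)$ is smooth at $\rho$ of complex dimension $\dim_\CC Z^1 = \dim_\CC H^1 + \dim_\CC B^1$.

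To finish, I would compute $\dim_\CC H^1(\Gamma; \mathfrak{g}_{\mathrm{Ad}\rho})$ by an Euler characteristic argument: $\dim H^0 - \dim H^1 + \dim H^2 = \chi(\Gamma) \cdot \dim_\CC \mathfrak{g} = 3\chi(S)$ in the closed case (and the analogous count for free groups). With $H^0 = H^2 = 0$ this yields $\dim_\CC H^1 = -3\chi(S)$, hence the quotient $\cX$ near $[\rho]$ is a complex manifold of dimension $-3\chi(S)$. The remaining point is to check that the $G$-action on the open set of non-elementary representations is proper with closed orbits so that the quotient is Hausdorff and the local slices glue to a genuine complex manifold structure; this is classical (see e.g.\ the work of Goldman on the symplectic nature of character varieties, or Gunning), and relies precisely on non-elementariness to rule out the pathological identifications that occur at reducible or elementary representations. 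The main obstacle is really bookkeeping rather than conceptual: one must be careful to state the cohomological dimension count uniformly for the closed and bounded cases and to cite the correct properness statement for the $\PSL_2(\CC)$-action restricted to $\cX$; since the lemma is flagged as well-known, I would keep this brief and point to the standard references.
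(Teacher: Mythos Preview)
The paper does not prove Lemma~\ref{lemma:rep-smooth} at all: it is stated as ``well-known'' and no argument is given. Your proposal supplies exactly the standard proof via Weil's infinitesimal deformation theory (tangent space $\cong Z^1$, orbit directions $\cong B^1$, obstructions in $H^2$), together with the vanishing $H^0=H^2=0$ for non-elementary $\rho$ via Poincar\'e duality and self-duality of the adjoint representation, and the Euler-characteristic count $\dim_\CC H^1 = -3\chi(S)$. This is correct and is precisely the argument the authors are implicitly invoking; your remarks on properness of the $\PSL_2(\CC)$-action to ensure Hausdorffness are also the right ingredient. One small comment: in this paper $S$ is taken to be closed of genus at least two, so the free-group case you include is not needed, and you should be mildly careful that the paper's definition of ``non-elementary'' (no global fixed point in $\ol{\HH}^3$) suffices to give $H^0(\Gamma;\mathfrak{g}_{\mathrm{Ad}\rho})=0$; it does, but the verification requires ruling out a nontrivial centralizer in $\mathfrak{sl}_2(\CC)$ rather than literal irreducibility of the lifted $\SL_2(\CC)$-representation.
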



Lemma \ref{lemma:rep-smooth} allows us to define smooth families
of equivariant maps.

\begin{defi}[Paths of equivariant maps]
A {\it{(germ of a) path of equivariant maps}} 
is a couple $(\boldsymbol{\tilde f}, \boldsymbol{\rho})$, where $\boldsymbol{\tilde f}:\widetilde{S}\times (-\epsilon, \epsilon)\to\HH^3$ and 
$\boldsymbol{\rho}:(-\epsilon, \epsilon)\to\cX$ are smooth maps 
for some $\epsilon>0$
such that
for any $t\in(-\epsilon, \epsilon)$ the restriction   $\tilde{f}_t:=\boldsymbol{\tilde{f}}(\bullet, t)$ and  the representation $\rho_t:=\boldsymbol{\rho}(t)$ form a
smooth equivariant map $(\tilde{f}_t,\rho_t)$.
Such a path $(\boldsymbol{\tilde{f}},\boldsymbol{\rho})$ is
a {\it{deformation}} of the equivariant map $(\tilde{f},\rho)$
if $\tilde{f}=\tilde{f}_0$ and $\rho=\rho_0$; moreover, it is an
{\it{isomonodromic deformation}} if $\rho_t=\rho$
for all $t\in(-\epsilon,\epsilon)$.
\end{defi}

In this paper we will be mainly interested in equivariant immersions, that is, equivariant maps $(\tilde{f},\rho)$ such that $d\tilde{f}$ has rank $2$ at every point. As in the introduction,
we will denote by $\AImm$ the space of equivariant smooth immersions,
which is an open subset of $\AMap$ for the smooth topology.
This in particular implies that if $(\boldsymbol{\tilde{f}},\boldsymbol{\rho})$ is a path of equivariant maps and $f_0$ is an immersion, then $f_t$ is an immersion for $t$ sufficiently small.

Clearly $\AImm$ is preserved by the action of $\PSL_2(\CC)$.
Equivariant immersions that differ by post-composition with an isometry of $\HH^3$ are geometrically equivalent. For this reason, we introduce the quotient space $\Imm= \PSL_2(\CC)\backslash\AImm$.

\subsection{Geometry of equivariant immersions}\label{sec:geometry}

Given an equivariant immersion $(\tilde{f}, \rho)$, the pullback 
$\tilde{I}=\tilde{f}^*h_{\HH^3}$ on $\widetilde{S}$
of the hyperbolic metric $h_{\HH^3}$ of $\HH^3$ is a Riemannian metric on $\widetilde{S}$, which is invariant under the action of $\pi_1(S)$. So $\tilde{I}$ is the lift of a Riemannian metric $I$ on $S$. This metric $I$ is called the {\it{first fundamental form}} of the immersion.


On the other hand, associated to $\tilde{f}$ there is a normal field $\tilde{N}$
on $\widetilde{S}$, which is defined by the conditions that at every $\tilde{p}\in\widetilde{S}$
\begin{itemize}
\item 
$\tilde{N}(\tilde{p})\in T_{\tilde{f}(\tilde{p})}\tilde{f}(\widetilde{S})$ is a unitary vector orthogonal to the image of $d\tilde{f}_{\tilde{p}}$.
\item 
if $\left(e_1(\tilde{p}),e_2(\tilde{p})\right)$ is a positively oriented basis
of $T_{\tilde{p}}\widetilde{S}$, then
$\left(\tilde{N}(\tilde{p}),d\tilde{f}_{\tilde{p}}(e_1(\tilde{p})),d\tilde{f}_{\tilde{p}}(e_2(\tilde{p}))\right)$
is a positively oriented basis of $T_{\tilde{f}(\tilde{p})}\HH^3$
\end{itemize}
Formally, $\tilde{N}$ is a section of the bundle $\Theta_{\tilde{f}}:=\tilde{f}^*T\HH^3$,
which comes endowed with
the pull-back $\nabla^{\HH^3}$ of the Levi-Civita connection of $\HH^3$.
Thus $\nabla^{\HH^3}\tilde{N}$ is a $1$-form on $\tilde S$ with values in $\Theta_{\tilde{f}}$.
Standard arguments show that $\nabla^{\HH^3}_{\tilde{w}}\tilde{N}$ is tangent to the immersion for every $\tilde{w}\in T\widetilde{S}$.
Thus an endomorphism $\tilde a: T\widetilde{S}\to T\widetilde{S}$
is defined by requiring that
\[
\nabla^{\HH^3}_{\tilde{w}}\tilde{N}=d\tilde{f}(\tilde a(\tilde{w}))
\]
for every $\tilde{w}\in T\widetilde{S}$. It is a classical fact that $\tilde a$ is $\tilde I$-self-adjoint.
Moreover the equivariance of $\tilde{N}$ with respect to the action of $\pi_1(S)$ implies that $\tilde a$ is $\pi_1(S)$-invariant, so that
it is the lift of an endomorphism $a:TS\to TS$ called the {\it{shape operator}} of the immersion.

The pair $(I, a)$ are called the {\it{immersion datum}} of $\tilde{f}$.
Since $\HH^3$ has curvature $-1$, the couple $(I,a)$ obeys a system of integrability conditions called the Gauss-Codazzi equations:
\begin{equation}\label{eq:gauss-cod}
\begin{cases}
K_I=\det (a)-1\,,\\
d^{\nabla^I}\!a=0\,,\,
\end{cases}
\end{equation}
where $K_I$ is the intrinsic curvature of the metric $I$ and $d^{\nabla^I}$ is the exterior differential associated to the Levi-Civita connection of $I$.
Namely, $d^{\nabla^I}\!a$ is the $2$-form with values in $TS$ defined by $(d^{\nabla^I}\!a)(v,w)=(\nabla^I_v a)(w)-(\nabla^I_w a)(v)$.


The following classical result states that the space $\Imm$ of equivariant immersions up to the action of $\PSL_2(\CC)$ is naturally identified to the space of solutions of \eqref{eq:gauss-cod} through the correspondence that sends $(\tilde{f},\rho)$ to its immersion datum.

\begin{prop}[Immersions and immersion data]
Two equivariant immersions correspond to the same immersion datum $(I,a)$ if and only if they differ by post-composition by an element of $\PSL_2(\CC)$.

Moreover, if $(I, a)$ is a solution of the Gauss-Codazzi equations \eqref{eq:gauss-cod}, where $I$ is a Riemannian metric and $a$ is an $I$-self-adjoint endomorphism of $TS$, then $(I, a)$ is the immersion datum of some equivariant immersion.
\end{prop}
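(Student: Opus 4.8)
The statement is the $\pi_1(S)$-equivariant version of the classical \emph{fundamental theorem of surface theory} in the space form $\HH^3$. The plan is: (1) pass to the simply connected cover $\widetilde{S}$, where the Gauss--Codazzi equations \eqref{eq:gauss-cod} for the lifted datum $(\tilde I,\tilde a)$ still hold because they are local; (2) on $\widetilde{S}$ prove rigidity (uniqueness up to post-composition with an element of $\PSL_2(\CC)$) by a moving-frame ODE argument; (3) on $\widetilde{S}$ prove existence by interpreting the Gauss--Weingarten equations as a flat $\PSL_2(\CC)$-connection and integrating it; (4) recover the monodromy $\rho$ from uniqueness, using that $\tilde I$ and $\tilde a$ are $\pi_1(S)$-invariant, and check the cocycle identity to see that $\rho$ is a homomorphism. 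The ``if'' direction of the first assertion is immediate, since post-composition with an isometry of $\HH^3$ preserves both fundamental forms; everything below concerns the ``only if'' direction and the existence statement.

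\textbf{Uniqueness on $\widetilde{S}$.} Suppose $(\tilde f_1,\rho_1)$ and $(\tilde f_2,\rho_2)$ have the same immersion datum $(I,a)$, with lift $(\tilde I,\tilde a)$. Fix $\tilde p_0\in\widetilde{S}$ and a positively oriented $\tilde I$-orthonormal frame of $T_{\tilde p_0}\widetilde{S}$. For each $j$ the triple $\bigl(\tilde N_j(\tilde p_0),\,d(\tilde f_j)_{\tilde p_0}(e_1),\,d(\tilde f_j)_{\tilde p_0}(e_2)\bigr)$ is a positively oriented orthonormal frame of $T_{\tilde f_j(\tilde p_0)}\HH^3$, so there is a unique $g\in\PSL_2(\CC)$ taking the frame of $\tilde f_1$ to that of $\tilde f_2$; after replacing $(\tilde f_1,\rho_1)$ by $(g\circ\tilde f_1,\,g\rho_1 g^{-1})$ the two adapted frames agree at $\tilde p_0$. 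Along any smooth path in $\widetilde{S}$ the adapted frame $\bigl(\tilde N,\,d\tilde f(e_1),\,d\tilde f(e_2)\bigr)$, viewed as a curve in the orthonormal frame bundle $\mathrm{Fr}^+(\HH^3)$, satisfies a first-order linear ODE (the Gauss--Weingarten system) whose coefficients depend only on $\tilde I$, $\tilde a$ and the path. By uniqueness of solutions of linear ODEs the adapted frames of $\tilde f_1$ and $\tilde f_2$ agree along every path from $\tilde p_0$, hence on all of $\widetilde{S}$ by connectedness; in particular $\tilde f_1=\tilde f_2$. Finally, for $\gamma\in\pi_1(S)$ one has $\rho_1(\gamma)\tilde f_1=\tilde f_1\circ\gamma=\tilde f_2\circ\gamma=\rho_2(\gamma)\tilde f_1$, and since $d\tilde f_1$ has rank $2$ somewhere, $\rho_1=\rho_2$. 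Thus the two original immersions differ exactly by the element $g$.

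\textbf{Existence on $\widetilde{S}$ and the monodromy.} Given a solution $(I,a)$ of \eqref{eq:gauss-cod}, lift to $(\tilde I,\tilde a)$ and assemble the Levi-Civita connection of $\tilde I$, the shape operator $\tilde a$, and the ambient curvature $-1$ of $\HH^3$ into a connection $1$-form $\omega$ on the trivial $\PSL_2(\CC)$-bundle over $\widetilde{S}$ (equivalently, a $1$-form with values in the Lie algebra $\psl_2(\CC)\cong\mathfrak{so}(3,1)$ encoding Gauss--Weingarten). A direct computation identifies the vanishing of the curvature $d\omega+\frac12[\omega,\omega]$ with the Gauss--Codazzi equations \eqref{eq:gauss-cod}, so $\omega$ is flat; since $\widetilde{S}$ is simply connected it integrates to a smooth map $F:\widetilde{S}\to \mathrm{Fr}^+(\HH^3)\cong\PSL_2(\CC)$, and composing with the basepoint projection $\mathrm{Fr}^+(\HH^3)\to\HH^3$ produces $\tilde f:\widetilde{S}\to\HH^3$, which by construction is an immersion with first fundamental form $\tilde I$ and shape operator $\tilde a$. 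For $\gamma\in\pi_1(S)$, the immersion $\tilde f\circ\gamma$ has first fundamental form $\gamma^*\tilde I=\tilde I$ and, by $\pi_1(S)$-invariance of $\tilde a$, shape operator $\tilde a$; the uniqueness step yields a unique $\rho(\gamma)\in\PSL_2(\CC)$ with $\tilde f\circ\gamma=\rho(\gamma)\circ\tilde f$, and applying uniqueness to $\tilde f\circ(\gamma_1\gamma_2)$ gives $\rho(\gamma_1\gamma_2)=\rho(\gamma_1)\rho(\gamma_2)$. Hence $\rho$ is a representation and $(\tilde f,\rho)$ is an equivariant immersion with immersion datum $(I,a)$.

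\textbf{Expected main obstacle.} The substantive points are the curvature computation showing $d\omega+\frac12[\omega,\omega]=0 \iff \eqref{eq:gauss-cod}$ and the bookkeeping that pins down the ambient group as the \emph{orientation-preserving} isometry group $\PSL_2(\CC)$ rather than the full isometry group of $\HH^3$ — this is where the orientation of $S$ and the normalization of the normal vector $\tilde N$ in Section~\ref{sec:geometry} are used. Both are routine but need the model of $\HH^3$ with its frame bundle to be set up carefully; alternatively one may simply invoke the standard fundamental theorem of hypersurfaces in space forms and only carry out the equivariance/monodromy argument in detail.
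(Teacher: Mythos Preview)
The paper does not prove this proposition: it is introduced as ``the following classical result'' and stated without proof, as the equivariant version of the fundamental theorem of hypersurfaces in a space form. Your proposal supplies exactly the standard argument one would give for this classical fact, and it is correct.

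A small comment on the uniqueness step: from $\rho_1(\gamma)\circ\tilde f_1=\rho_2(\gamma)\circ\tilde f_1$ you conclude $\rho_1(\gamma)=\rho_2(\gamma)$ ``since $d\tilde f_1$ has rank $2$ somewhere''. Strictly speaking, agreement on a $2$-dimensional immersed surface pins down an isometry of $\HH^3$ only up to the reflection across the tangent plane; what finishes the argument is that both $\rho_j(\gamma)$ are \emph{orientation-preserving} (they lie in $\PSL_2(\CC)$), or equivalently that your moving-frame argument already shows the adapted frames --- including the normals --- coincide, so $\rho_1(\gamma)$ and $\rho_2(\gamma)$ agree on a full orthonormal frame at one point. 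You flag this orientation issue in your ``expected main obstacle'', so you are aware of it; just make sure the sentence in the uniqueness paragraph actually uses it.
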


The above correspondence between immersions and immersion data
can be promoted to a correpondence between paths of immersions.

\begin{prop}[Paths of immersions and of immersion data]
If $(\boldsymbol{\tilde f}, \boldsymbol{\rho})$ is a smooth path of equivariant immersions, then the corresponding family of immersion data $(I_t, a_t)_{t\in(-\epsilon, \epsilon)}$ is smooth, 
i.e.~for any couple of vector fields $X,Y$ over $S$ the functions $(t,p)\mapsto I_t(X(p), Y(p))$ and $(t,p)\to a_t(X(p), Y(p))$ defined on $(-\epsilon, \epsilon)\times S$ are smooth.

Conversely, if $(I_t, a_t)_{t\in(-\epsilon, \epsilon)}$ is a smooth family of immersion data, there is a smooth path $(\boldsymbol{\tilde f}, \boldsymbol{\rho})$ of equivariant immersions such that
$(I_t, a_t)$ are the embedding data of $f_t$ for every $t\in(-\epsilon, \epsilon)$.
\end{prop}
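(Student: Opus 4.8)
The plan is to reduce the statement to the classical fact (the first \textbf{Prop} quoted just above) that the immersion datum determines an equivariant immersion up to the $\PSL_2(\CC)$-action, by making the construction ``with parameter'' and checking smoothness in the parameter. First I would recall the ODE construction underlying the static correspondence: given a solution $(I,a)$ of the Gauss–Codazzi equations \eqref{eq:gauss-cod}, one builds the $\HH^3$-valued frame field by integrating the Cartan connection $\omega$ (equivalently, the $\psl_2(\CC)$-valued connection form encoding $I$, $a$ and the induced Levi-Civita connection $\nabla^I$) over $\widetilde S$. Concretely, one fixes a basepoint $\tilde p_0\in\widetilde S$ and an initial frame at a point of $\HH^3$, and obtains a developing map $\tilde f=\tilde f_{(I,a)}$ and monodromy $\rho=\rho_{(I,a)}$ by parallel transport; the Gauss–Codazzi equations are exactly the flatness (zero-curvature) condition that guarantees this is well defined and $\rho$-equivariant. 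I would write this connection form explicitly as depending, at each point $p\in S$, polynomially (or at least smoothly) on the $2$-jet of $I$ and the $1$-jet of $a$ at $p$.

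Next, for a smooth family $(I_t,a_t)_{t\in(-\epsilon,\epsilon)}$ I would assemble the corresponding family of connection forms $\omega_t$, which then depends smoothly on $(t,p)$ together with all spatial derivatives, i.e.~$\omega$ is a smooth $\psl_2(\CC)$-valued $1$-form on $\widetilde S\times(-\epsilon,\epsilon)$ (in the $\widetilde S$-directions only, pulled back via $\pi$). The key step is then \emph{smooth dependence of solutions of linear ODEs on parameters}: integrating $\omega_t$ from the fixed basepoint gives $\tilde f_t(\tilde p)$, and since the right-hand side of the transport equation is smooth jointly in $(\tilde p, t)$, the solution is smooth jointly in $(\tilde p,t)$ — so $\boldsymbol{\tilde f}:\widetilde S\times(-\epsilon,\epsilon)\to\HH^3$ is smooth. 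The monodromy is recovered as $\rho_t(\gamma)=$ (holonomy of $\omega_t$ around a loop representing $\gamma$), which is again smooth in $t$, and lands in $\cX$ for $\epsilon$ small since being non-elementary is an open condition and the initial $\rho_0$ is the monodromy of an immersion (hence non-elementary, as established in Section \ref{sec:functional}). Finally, uniqueness of the ODE solution, together with the static \textbf{Prop}, shows that $(I_t,a_t)$ is indeed the immersion datum of $\tilde f_t$ for every $t$, so $(\boldsymbol{\tilde f},\boldsymbol{\rho})$ is the desired smooth path.

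The main obstacle I anticipate is purely bookkeeping rather than conceptual: one must choose the frame/normalization in the ODE construction so that it genuinely depends smoothly on $(I,a)$ and their derivatives — e.g.~build an $I_t$-orthonormal frame on $\widetilde S$ from a fixed background frame by Gram–Schmidt (smooth because $I_t$ is a positive-definite family) and transport the initial condition along a fixed reference path — and then verify that the resulting $\psl_2(\CC)$-valued $1$-form is smooth in all variables. Once the connection form is known to be smooth in $(t,p)$, the rest is a direct appeal to the smooth-parameter-dependence theorem for linear ODEs and to the already-quoted static correspondence. I would also remark that, conversely, the forward direction (smooth path of immersions $\Rightarrow$ smooth family of data) is immediate, since $I_t=\tilde f_t^*h_{\HH^3}$ and $a_t$ are obtained from $\tilde f_t$ and its first two derivatives by algebraic operations, all smooth in $t$.
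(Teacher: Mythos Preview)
The paper states this proposition without proof, treating it as a standard parametric refinement of the classical correspondence between immersion data and equivariant immersions (the preceding proposition). Your outline is the natural and correct way to supply such a proof: build the flat $\psl_2(\CC)$-connection out of $(I_t,a_t)$, observe that its coefficients depend smoothly on $(t,p)$, and invoke smooth dependence on parameters for the parallel-transport ODE to get a smooth developing map and holonomy. The forward direction is, as you say, immediate.

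One small correction: your justification that $\rho_0$ is non-elementary ``as established in Section~\ref{sec:functional}'' is not quite right. What is proved there (Lemma~\ref{lemma:minimizing-group}) is that the monodromy of a \emph{critical} immersion is non-elementary; the proposition under discussion concerns arbitrary equivariant immersions, for which no such conclusion has been drawn. If you want the resulting $\boldsymbol{\rho}$ to take values in $\cX$ as the paper's definition of ``path'' formally requires, you should either add this as a hypothesis on the family $(I_t,a_t)$ (e.g.\ that $(I_0,a_0)$ already arises from an immersion with non-elementary monodromy, which is the only case used later) or simply note that the construction produces a smooth family in $\Hom(\pi_1(S),\PSL_2(\CC))$ and that landing in $\cX$ is then an open condition once it holds at $t=0$. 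This is a packaging issue rather than a flaw in the argument.
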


\begin{remark}
As we have fixed a reference metric $h$ over $S$, any other Riemannian metric $I$ over $S$ can be described by an $h$-self-adjoint positive operator $b$ by requiring
that $I(v,w)=h(bv, bw)$ for every $v,w\in TS$. Notice that $b$ is the square root of the operator obtained by ``rising'' an index of $I$ with respect to the background metric $h$. 
So, instead of considering couples $(I, a)$, we can look at pairs $(b,a)$, where $b$ is $h$-self-adjoint and $a$ is $I$-self-adjoint, where $I(v,w):=h(bv,bw)$.
\end{remark}

Finally, we will associate to any section of $\Theta_{\tilde{f}}$ a self-adjoint 
and a skew-self-adjoint endomorphism of $\Theta_{\tilde{f}}$, using the following simple linear algebra lemma (see \cite[Section 5.2]{cyclic}).

\begin{lemma}[A-S decomposition of a linear immersion]\label{lm:L}
Let $V$ be an Euclidean vector space of dimension $3$ and let 
$W$ be a 2-dimensional linear subspace. Then any linear map $L:W\to V$ can be uniquely decomposed as $L=\Ae^L+\Se^L$,
where $\Ae^L:W\to W$ is self-adjoint and $\Se^L$ can be written as
$\Se^L(\bullet):=v\times\bullet$ for some fixed $v\in V$.
\end{lemma}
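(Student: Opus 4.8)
The plan is to set up the decomposition by a dimension count and then verify uniqueness and existence directly. First I would fix an orientation and the cross product $\times$ on $V$ coming from the Euclidean structure, and identify $W$ with an oriented $2$-plane, so that the unit normal $N\in V$ to $W$ is determined. The target is to write any linear $L:W\to V$ as $L=\Ae^L+\Se^L$ with $\Ae^L:W\to W\subset V$ self-adjoint and $\Se^L=v\times(\bullet)$ for some $v\in V$. I would begin by checking that the two summand-spaces have the right dimensions and intersect trivially: the space of self-adjoint endomorphisms of $W$ has dimension $3$, the space of maps of the form $v\times(\bullet)|_W$ has dimension $3$ (the assignment $v\mapsto v\times(\bullet)|_W$ is injective since $v\times(\bullet)$ vanishes on all of $W$ only if $v$ is orthogonal to $W$ \emph{and} parallel to $W$, forcing $v=0$), and $\Hom(W,V)$ has dimension $6$. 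So it suffices to show the two subspaces meet only in $0$, i.e.\ that a self-adjoint $A:W\to W$ which can also be written as $v\times(\bullet)|_W$ must be zero.

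For that intersection claim I would argue as follows. Suppose $A(w)=v\times w$ for all $w\in W$, with $A$ self-adjoint as an endomorphism of $W$. Write $v=v^\top+v^\perp$ with $v^\top\in W$ and $v^\perp=cN$. Then $v\times w=v^\top\times w+cN\times w$; the term $N\times w$ lies in $W$ and is (up to sign) the rotation of $w$ by $\pi/2$, which is \emph{skew}-adjoint on $W$; the term $v^\top\times w$ is orthogonal to $W$ (it is a multiple of $N$). Since $A(w)\in W$ for all $w$, we must have $v^\top\times w\equiv 0$, hence $v^\top=0$; and since $A$ is self-adjoint while $cN\times(\bullet)$ is skew, we get $c=0$. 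Thus $v=0$ and $A=0$, proving the sum is direct, hence (by the dimension count) equal to all of $\Hom(W,V)$. This gives both existence and uniqueness of the decomposition simultaneously.

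As an alternative, more constructive route — which also makes the formula for $v$ explicit and is probably cleaner to present — I would split $L$ into its $W$-component and its normal component. Pick a positively oriented orthonormal basis $(e_1,e_2)$ of $W$ with $(N,e_1,e_2)$ positively oriented in $V$. The normal part of $L$ is captured by the two numbers $\langle L e_1,N\rangle$ and $\langle Le_2,N\rangle$, and one checks that choosing $v^\top\in W$ appropriately (a rotation by $\pi/2$ of the vector $\langle Le_1,N\rangle e_1+\langle Le_2,N\rangle e_2$, up to sign) makes $v^\top\times(\bullet)|_W$ match the normal part of $L$ exactly. Subtracting this off, one is left with an endomorphism $L':W\to W$, which decomposes uniquely into its self-adjoint part $\Ae^L$ and skew-adjoint part; the skew-adjoint part of $L'$ is a multiple of $N\times(\bullet)|_W$, so it can be absorbed into $\Se^L$ by adding the corresponding multiple of $N$ to $v$. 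Uniqueness then follows because $\Ae^L$ is forced to be the self-adjoint part of $L'=L-\Se^L$ and the normal component of $L$ determines $v^\top$ while the skew part of $L'$ determines $v^\perp$.

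The main obstacle, such as it is, is purely bookkeeping: keeping straight which pieces of $v\times(\bullet)$ land in $W$ versus along $N$, and which are self- versus skew-adjoint, and getting the signs in the cross-product identities ($N\times e_1=e_2$, $N\times e_2=-e_1$, etc.) consistent with the chosen orientations. There is no analytic or conceptual difficulty; everything reduces to linear algebra in dimension $\le 3$. I expect the cleanest writeup to be the dimension-count argument for the structural statement, followed by the explicit formula $\Se^L(\bullet)=v\times(\bullet)$ with $v$ written in terms of $(N,e_1,e_2)$ for later use in the paper.
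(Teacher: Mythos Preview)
Your argument is correct. The paper itself does not supply a proof of this lemma; it simply states it with a reference to \cite[Section 5.2]{cyclic}. So there is no in-paper proof to compare against.

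Your dimension-count approach is clean and complete: $\dim\Hom(W,V)=6$, the self-adjoint endomorphisms of $W$ form a $3$-dimensional subspace, the maps $v\mapsto (v\times\bullet)|_W$ give another $3$-dimensional subspace (your injectivity check is fine, though the phrasing ``orthogonal to $W$ and parallel to $W$'' could be tightened to: $v\times w=0$ for all $w\in W$ forces $v$ parallel to every $w\in W$, impossible in a $2$-plane unless $v=0$), and your verification that the two subspaces meet only in $0$ via the tangential/normal split of $v$ is exactly right. The constructive alternative you sketch is also correct and, as you note, has the advantage of producing an explicit formula for $v$ in a basis.
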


Given a section $\tilde{X}$ of $\Theta_{\tilde{f}}$,
at every point $\tilde{p}$ of $\widetilde{S}$
we can view $(\nabla^{\HH^3}\tilde{X})_{\tilde{p}}$ as a linear map from $T_{\tilde{f}(\tilde{p})}\tilde{f}(\widetilde{S})$ to $T_{\tilde{f}(\tilde{p})}\HH^3$, where we have
identified $T_{\tilde{p}}\widetilde{S}$ and $T_{\tilde{f}(\tilde{p})}\tilde{f}(\widetilde{S})$
via $d\tilde{f}_{\tilde{p}}$.

\begin{defi}[Self-adjoint and skew-self-adjoint derivative of a section of 
$\Theta_{\tilde{f}}$]\label{def:AS}
Let $\tilde{f}$ be an immersion and
$\tilde{X}$ be a section of $\Theta_{\tilde{f}}$.
The {\it{self-adjoint derivative}} of $\tilde{X}$
is the endomorphism $\Ae^{\tilde{X}}_{\tilde{f}}:T\widetilde{S}\to T\widetilde{S}$ defined as
$\left(\Ae^{\tilde{X}}_{\tilde{f}}\right)_{\tilde{p}}:=\Ae^{(\nabla^{\HH^3}\tilde{X})_{\tilde{p}}}$
 for all $\tilde{p}\in\widetilde{S}$.
The {\it{skew-self-adjoint derivative}} of $\tilde{X}$
is the linear morphism $\Se^{\tilde{X}}_{\tilde{f}}:T\widetilde{S}\to \Theta_{\tilde{f}}$
defined as $\left(\Se^{\tilde{X}}_{\tilde{f}}\right)_{\tilde{p}}:=
\Se^{(\nabla^{\HH^3}\tilde{X})_{\tilde{p}}}$ for all $\tilde{p}\in\widetilde{S}$.
\end{defi}


We will usually denote by $\tilde{X}'$ the section of $\Theta_{\tilde{f}}$
such that $\Se^{\tilde{X}}_{\tilde{f}}(\bullet)=\tilde{X}'\times\bullet$.

\subsection{The $1$-Schatten energy}

In this subsection we introduce a functional on $F$ on the space 
$\Map$ of smooth equivariant maps from $\widetilde{S}$ to $\HH^3$ 
that will be a central object of our investigation in this paper.
We incidentally mention thant such functional can be defined on a space of maps of lower regularity (for example, Lipschitz maps).\\

Given a smooth equivariant map $(\tilde{f},\rho)$, the $1$-Schatten norm of $f$  is defined as the function on $\tilde S$ given by $\tilde{p}\mapsto \|d\tilde{f}_{\tilde{p}}\|_1$,
where $\|d\tilde{f}_{\tilde{p}}\|_1$ denotes the $1$-Schatten norm
of the linear map $d\tilde{f}_{\tilde{p}}$
as defined in Section \ref{sec:schatten-def}.

Clearly, this norm is unchanged if we replace $\tilde{f}$ by $g\circ\tilde{f}$ with $g\in\PSL_2(\CC)$.
Hence, the function $\|d\tilde{f}\|_1$ on $\widetilde{S}$ descends to a function on $S$,
denoted as $\|df\|_1$.

\begin{defi}[$1$-Schatten norm of an equivariant map]
The {\it{$1$-Schatten norm of the equivariant map $(\tilde{f},\rho)$}} is 
the function $\|df\|_1:S\to\RR_{\geq 0}$ defined in such a way that for every $p\in S$
the value $\|df_p\|_1$ agrees with the $1$-Schatten norm $\|d\tilde{f}_{\tilde{p}}\|_1$
of the linear map $d\tilde{f}_{\tilde{p}}:T_{\tilde{p}}\widetilde{S}\to T_{\tilde{f}(\tilde{p})}\HH^3$
where $\tilde{p}\in\widetilde{S}$ is any lift of $p$.
\end{defi}

\begin{notation}
The symbol $\|df\|_1$ associated to an equivariant map $(\tilde{f},\rho)$ 
aims at helping the reader in remembering that $\|df\|_1$ is a well-defined
function on $S$, and not just on $\widetilde{S}$. In general, though, no map $f$
is involved in its definition. 
However, if $\tilde{f}$
is a lift of a map $f:S\to M$ to a complete hyperbolic $3$-manifold $M$, then
$\|df_p\|_1$ is exactly the $1$-Schatten norm of $df_p:T_p S\to T_{f(p)}M$.
\end{notation}

The following is a direct consequence of Remark \ref{rmk:1-schatten-nonsmooth}.

\begin{lemma}[Regularity of $1$-energy density]
Let $(\boldsymbol{\tilde{f}}, \boldsymbol{\rho})$ be a path of equivariant maps. The function $S\times(-\epsilon, \epsilon)\ni(p, t)\mapsto \|(df_t)_p\|_1\in\RR_{\geq 0}$ is 
Lipschitz; moreover, it is smooth at all points $p\in S$
such that $(d\tilde{f}_t)_{\tilde{p}}$ has rank $2$.
\end{lemma}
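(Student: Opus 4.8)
The plan is to reduce this statement about families to a pointwise statement about the $1$-Schatten norm function on the space of linear maps, so that the bulk of the work is deferred to the appendix material invoked via Remark~\ref{rmk:1-schatten-nonsmooth}. Concretely, I would observe that the map $(p,t)\mapsto (d\tilde f_t)_{\tilde p}$, read in local coordinates on $S$ and in the trivialization of $\boldsymbol{\tilde f}^*T\HH^3$ along the path, is a smooth map from $S\times(-\e,\e)$ (or rather a chart thereof together with a fundamental domain of the $\pi_1(S)$-action on $\widetilde S$) into the space of $3\times 2$ real matrices, or equivalently into $\Hom(\R^2,\R^3)$ equipped with a varying pair of inner products coming from $\tilde h$ and $h_{\HH^3}$. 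The claimed regularity of $(p,t)\mapsto \|(df_t)_p\|_1$ then follows by composing this smooth matrix-valued map with the function ``$1$-Schatten norm,'' whose Lipschitz character and smoothness away from rank-drop locus is exactly the content of Remark~\ref{rmk:1-schatten-nonsmooth} (and the appendix on $1$-Schatten norms of families of matrices).

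The key steps, in order, would be: (1) Fix a point $p_0\in S$ and a lift $\tilde p_0$; choose local coordinates near $\tilde p_0$ and an orthonormal frame for $\tilde h$, and trivialize $\boldsymbol{\tilde f}^*T\HH^3$ over a neighborhood of $\{\tilde p_0\}\times[-\e',\e']$ by parallel transport (or any smooth trivialization), so that $d\tilde f_t$ becomes a smooth matrix-valued function $A(p,t)$ of $(p,t)$. (2) Note that the $1$-Schatten norm $\|d\tilde f_t\|_1 = \|d(f_t)\|_1$ is independent of the choice of orthonormal frame and of the isometric trivialization, hence is exactly $\sigma_1(A(p,t))+\sigma_2(A(p,t))$, the sum of singular values of $A$; descend this to $S$ using $\rho$-equivariance as already recorded. (3) Invoke Remark~\ref{rmk:1-schatten-nonsmooth}: the function $A\mapsto \|A\|_1$ on matrices is globally Lipschitz (it is a norm, in fact, after identifying the Schatten-$1$ norm with a genuine matrix norm), hence $(p,t)\mapsto\|(df_t)_p\|_1$ is Lipschitz as a composition of a smooth and a Lipschitz map. (4) At points where $(d\tilde f_t)_{\tilde p}$ has rank $2$, i.e. $\sigma_2(A)>0$, the two singular values are smooth functions of $A$ (the eigenvalues of $A^\top A$ are then distinct from $0$, and if in addition $\sigma_1\neq\sigma_2$ they are simple eigenvalues; the degenerate case $\sigma_1=\sigma_2>0$ still gives $\sigma_1+\sigma_2 = \sqrt{2\,\tr(A^\top A) } $-type smoothness because $\|A\|_1^2$ can be expressed smoothly there), so $\|A\|_1$ is a smooth function of $A$ near such a point, and hence $(p,t)\mapsto\|(df_t)_p\|_1$ is smooth there.

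I expect the main obstacle to be step (4), specifically the smoothness of the sum of singular values at points where the \emph{two} singular values coincide while both being nonzero: individually the singular values are then only Lipschitz, and one must check that the particular symmetric function $\sigma_1+\sigma_2$ (equivalently $\tr\sqrt{A^\top A}$) is nonetheless smooth there. The way around this is to write $\|A\|_1 = \tr|A| = \tr\sqrt{A^\top A}$ on $\R^{3\times 2}$ and observe that on the open set $\{A : \det(A^\top A)\neq 0\}$ the positive-definite matrix $A^\top A$ avoids the boundary of the cone of positive matrices, so its (matrix) square root depends smoothly on it, and the trace of that square root is then manifestly smooth; this is precisely the kind of statement collected in Appendix~\ref{sec:app}, so in the write-up I would simply cite it. The remaining points (reduction to matrices, frame-independence, descent to $S$) are routine and need only a sentence or two each.
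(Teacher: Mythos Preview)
Your proposal is correct and follows exactly the approach intended by the paper: the lemma is stated there as a direct consequence of Remark~\ref{rmk:1-schatten-nonsmooth}, with no further proof given, and your argument is precisely the natural unpacking of that remark (smooth matrix-valued map composed with the Lipschitz, rank-2-smooth $1$-Schatten norm). Your extra care in step~(4) about the smoothness of $\tr\sqrt{A^\top A}$ on the locus $\det(A^\top A)>0$ via the smooth dependence of the positive square root is exactly the right justification and matches what the appendix records.
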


\begin{remark}[$1$-energy density and $b$-operator]
If $(\tilde{f},\rho)$ is an equivariant immersion with first fundamental form $I$
and let $b$ be the $h$-self-adjoint operator on $TS$ such that $I(v,w)=h(bv, bw)$.
Then its pull-back $\tilde{b}$ to $T\widetilde{S}$ is
the $h$-self-adjoint component in the polar decomposition of $d\tilde{f}$.
Thus $\|df\|_1=\tr(b)$.
\end{remark}

\begin{defi}[$1$-Schatten energy]
The {\it{$1$-Schatten energy}} of a smooth equivariant map $(\tilde{f},\rho)$ in $\HH^3$
is defined as 
\[
F(\tilde{f}):=\int_S \|df\|_1\, \da.
\] 
\end{defi}

\begin{remark}\label{rmk:F-Lipschitz}
The $1$-Schatten energy $F(\tilde{f})$ can be defined
for equivariant maps $\tilde{f}$ of lower regularity, such as
Lipschitz maps (in this case $\|df\|_1$ is bounded measurable).
\end{remark}

Clearly, $F(g\circ \tilde{f})=F(\tilde{f})$ for every $g\in\PSL_2(\CC)$.
We also note that,
for a path of equivariant maps  $(\boldsymbol{\tilde{f}}, \boldsymbol{\rho})$,  the function $t\mapsto F(\tilde{f}_t)$ is smooth at $t_0$ provided that the map $\tilde{f}_{t_0}$ is an immersion.

The following simple and important property is a consequence of Remark 
\ref{rmk:1-schatten-lipschitz}.

\begin{lemma}[$1$-Schatten energy and Lipschitz maps]\label{lemma:C-lipschitz}
If $\tilde{f}:\widetilde{S}\rar\HH^3$ is a smooth equivariant immersion
and $g:\HH^3\to \HH^3$ is $C$-Lipschitz, then 
the $1$-Schatten energy (in the sense of Remark \ref{rmk:F-Lipschitz})
of the Lipschitz map $g\circ\tilde{f}$ satisfies
$\|d(g\circ\tilde{f})\|_{1}\leq C\cdot \|d\tilde{f}\|_{1}$
at almost every point of $\widetilde{S}$.
Hence, $F(g\circ \tilde{f})\leq C\cdot F(\tilde{f})$.
\end{lemma}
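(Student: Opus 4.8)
The plan is to prove the pointwise bound $\|d(g\circ\tilde{f})_{\tilde{p}}\|_1\le C\,\|d\tilde{f}_{\tilde{p}}\|_1$ at almost every $\tilde{p}\in\widetilde{S}$ and then integrate. The composition $g\circ\tilde{f}$ is Lipschitz, so by Rademacher's theorem it is differentiable almost everywhere on the smooth surface $\widetilde{S}$; I would work at such a point $\tilde{p}$, where $d(g\circ\tilde{f})_{\tilde{p}}:T_{\tilde{p}}\widetilde{S}\to T_{g(\tilde{f}(\tilde{p}))}\HH^3$ is an honest linear map.

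The key elementary estimate is that for every $v\in T_{\tilde{p}}\widetilde{S}$ one has $\|d(g\circ\tilde{f})_{\tilde{p}}(v)\|\le C\,\|d\tilde{f}_{\tilde{p}}(v)\|$. To see this, pick a smooth curve $\gamma$ in $\widetilde{S}$ with $\gamma(0)=\tilde{p}$ and $\dot{\gamma}(0)=v$; since the Riemannian norm of the velocity of a curve is the infinitesimal growth rate of distance, and since $g$ is $C$-Lipschitz for the metric of $\HH^3$,
\[
\|d(g\circ\tilde{f})_{\tilde{p}}(v)\|=\lim_{t\to0}\frac{d_{\HH^3}\!\big(g(\tilde{f}(\gamma(t))),g(\tilde{f}(\gamma(0)))\big)}{|t|}\le C\lim_{t\to0}\frac{d_{\HH^3}\!\big(\tilde{f}(\gamma(t)),\tilde{f}(\gamma(0))\big)}{|t|}=C\,\|d\tilde{f}_{\tilde{p}}(v)\|~,
\]
the last equality using the smoothness of $\tilde{f}$. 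This argument only uses the metric form of the Lipschitz condition, not differentiability of $g$, and this is exactly the point of the proof: the image $\tilde{f}(\widetilde{S})$ has measure zero in $\HH^3$ and could a priori be entirely contained in the non-differentiability locus of $g$, so one cannot naively write a chain rule $d(g\circ\tilde{f})_{\tilde{p}}=dg_{\tilde{f}(\tilde{p})}\circ d\tilde{f}_{\tilde{p}}$. I expect this to be the only genuinely subtle point; routing through Rademacher applied to $g\circ\tilde{f}$ on $\widetilde{S}$ plus the curve estimate sidesteps it.

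Since $\tilde{f}$ is an immersion, $d\tilde{f}_{\tilde{p}}$ is injective, hence $d(g\circ\tilde{f})_{\tilde{p}}$ factors as $L\circ d\tilde{f}_{\tilde{p}}$ for a well-defined linear map $L:\mathrm{im}(d\tilde{f}_{\tilde{p}})\to T_{g(\tilde{f}(\tilde{p}))}\HH^3$, and the estimate above says precisely that $\|L\|_{\mathrm{op}}\le C$. The ideal property of the $1$-Schatten norm under composition with a bounded operator (Remark \ref{rmk:1-schatten-lipschitz}) then gives
\[
\|d(g\circ\tilde{f})_{\tilde{p}}\|_1=\|L\circ d\tilde{f}_{\tilde{p}}\|_1\le\|L\|_{\mathrm{op}}\,\|d\tilde{f}_{\tilde{p}}\|_1\le C\,\|d\tilde{f}_{\tilde{p}}\|_1~,
\]
which is the claimed pointwise inequality. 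Finally, integrating this inequality against $\da$ over $S$ — legitimate since the $1$-Schatten energies are defined as in Remark \ref{rmk:F-Lipschitz} — yields $F(g\circ\tilde{f})\le C\,F(\tilde{f})$.
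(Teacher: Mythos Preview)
Your proof is correct and is essentially a careful elaboration of what the paper only asserts: the paper does not give a proof of this lemma, merely citing Remark~\ref{rmk:1-schatten-lipschitz} as the reason. Your argument invokes exactly that remark at the key step, and your care about applying Rademacher to $g\circ\tilde{f}$ on $\widetilde{S}$ rather than to $g$ on $\HH^3$ (to avoid the measure-zero issue with the image) is a genuine and necessary refinement that the paper leaves implicit.
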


\subsection{Minimizing maps and critical points of $F$}

Fix a representation $\rho:\pi_1(S)\rightarrow \PSL_2(\CC)$ throughout the whole section.

Let us denote by $\AMap_\rho$ the space of smooth equivariant maps of $\widetilde S$ into $\HH^3$ with monodromy $\rho$,
equipped with the $C^\infty$ topology, and by $\AImm_\rho$  the subspace of smooth equivariant immersions with monodromy $\rho$. 

\begin{defi}[Minimizing $\rho$-equivariant maps]
A $\rho$-equivariant map $\tilde{f}\in \AMap_\rho$ is {\it{minimizing}} if it realizes the minimum of the functional $F$ on $\AMap_\rho$.
\end{defi}

On $\AImm_\rho$ the functional $F$ is smooth
in the following sense: if $\boldsymbol{\tilde{f}}:(-\epsilon,\epsilon)\times\widetilde{S}\to\HH^3$ is a smooth path of equivariant immersions with constant monodromy $\rho$, then the function $t\mapsto F(f_t)$ is smooth. The following is then immediate.

\begin{lemma}[Minimizing immersions are critical points of $F$]
If $(\tilde{f},\rho)\in\AImm_\rho$ is a $\rho$-equivariant minimizing immersion, then
it is a critical point of $F$, i.e.~
for any isomonodromic deformation $\boldsymbol{\tilde{f}}:(-\epsilon,\epsilon)\times\widetilde{S}\to\HH^3$ of $\tilde{f}$
we have
\[
    \left.\frac{d}{dt}F(\tilde{f}_t)\right|_{t=0}=0~.
\]
\end{lemma}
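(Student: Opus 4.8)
The statement is essentially the elementary fact that a smooth function on an interval attaining an interior minimum has vanishing derivative there; the only point requiring care is the regularity of $t\mapsto F(\tilde f_t)$, which is not automatic because $F$ is merely Lipschitz in general.

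First I would unwind the definitions. Since $(\tilde f,\rho)$ is a minimizing $\rho$-equivariant map, by definition $F(\tilde f)\le F(\tilde g)$ for every $\tilde g\in\AMap_\rho$. Now let $\boldsymbol{\tilde f}\colon(-\epsilon,\epsilon)\times\widetilde S\to\HH^3$ be any isomonodromic deformation of $\tilde f$, so that $\tilde f_0=\tilde f$ and every $\tilde f_t$ is a $\rho$-equivariant smooth map, i.e.\ $\tilde f_t\in\AMap_\rho$ for all $t\in(-\epsilon,\epsilon)$. Applying the minimizing property to each $\tilde g=\tilde f_t$ gives $F(\tilde f_t)\ge F(\tilde f_0)$ for all $t$, so the one-variable function $\varphi(t):=F(\tilde f_t)$ attains a global minimum at the interior point $t=0$.

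Next I would invoke regularity to conclude that $\varphi$ is differentiable at $0$. Since $\tilde f_0=\tilde f$ is an immersion, $d\tilde f_{\tilde p}$ has rank $2$ at every $\tilde p\in\widetilde S$; by the lemma on the regularity of the $1$-energy density, the density $(p,t)\mapsto\|(df_t)_p\|_1$ is then smooth in a neighbourhood of $S\times\{0\}$ (after shrinking $\epsilon$, using compactness of $S$ and the fact that $\AImm$ is open in $\AMap$ for the $C^\infty$ topology, so $\tilde f_t$ remains an immersion for small $t$). Integrating over the compact surface $S$ against the fixed area form $\da$, the function $\varphi(t)=\int_S\|df_t\|_1\,\da$ is smooth near $t=0$ — this is exactly the assertion, already recorded in the excerpt, that $F$ is smooth along paths of equivariant immersions. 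Hence $\varphi$ is in particular differentiable at $0$, and Fermat's theorem for a differentiable function with an interior minimum yields $\varphi'(0)=0$, which is the claimed equality $\left.\frac{d}{dt}F(\tilde f_t)\right|_{t=0}=0$.

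\textbf{Main obstacle.} There is no serious difficulty: the one subtlety is that $F$ is not globally smooth on $\AMap_\rho$ (the $1$-Schatten norm fails to be smooth at points where $d\tilde f$ is rank-deficient), so one genuinely needs the immersion hypothesis on $\tilde f_0$ to guarantee that $\varphi$ is differentiable at $t=0$; once that regularity is in place, the conclusion is immediate from the minimizing property.
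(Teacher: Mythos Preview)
Your proof is correct and matches the paper's approach exactly: the paper states immediately before the lemma that $t\mapsto F(\tilde f_t)$ is smooth whenever $\boldsymbol{\tilde f}$ is a path of equivariant immersions, and then declares the lemma ``immediate'' without further proof. You have simply spelled out that immediate step (Fermat's theorem) together with the reason the smoothness holds, which is precisely the regularity content the paper records just before the statement.
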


In Section \ref{ssc:convexity} we will discuss the convexity of $F$ showing that any critical immersion is in fact minimizing (Corollary \ref{cor:uniqueness-minima}).


\subsection{Minimizing maps with Fuchsian monodromy}

In this section we consider the case where the representation
$\rho$ is {\it{Fuchsian}}, that is, $\rho$ is a discrete and faithful representation of $\pi_1(S)$ into $\PSL_2(\RR)\subset\PSL_2(\CC)$.
We identify $\HH^2$ to the totally geodesic plane of $\HH^3$ stabilised by $\PSL_2(\RR)$.

We being with a more general remark (see \cite[Section II.1.3]{notesnotesThurston} for more details on the nearest point retraction).

\begin{lemma}[Nearest point retraction]
Let $K$ be a closed convex subset of $\HH^3$, invariant under the action of $\pi_1(S)$ induced by 
a representation $\pi_1(S)\rar\PSL_2(\CC)$.
Then the nearest point retraction
$r:\HH^3\to K$  is $\pi_1(S)$-equivariant and $1$-Lipschitz.
Moreover, if the representation takes values in $\PSL_2(\RR)$ and $K=\HH^2$, then
$r$ is smooth.
\end{lemma}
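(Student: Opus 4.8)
The plan is to treat the three assertions separately, since they are of rather different natures. For $\pi_1(S)$-equivariance: the nearest point retraction $r:\HH^3\to K$ is characterized by the metric property that $d(x,r(x))=d(x,K)$ and $r(x)$ is the unique minimizer. If $\gamma\in\pi_1(S)$ acts by an isometry $g=\rho(\gamma)$ of $\HH^3$ preserving $K$, then for any $x$ we have $d(g\cdot x, K)=d(g\cdot x, g\cdot K)=d(x,K)$, and $g\cdot r(x)\in K$ realizes this distance; by uniqueness of the closest point, $r(g\cdot x)=g\cdot r(x)$. This is the easy part and is essentially formal.

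For the $1$-Lipschitz property: the standard argument is that in $\HH^3$ (a CAT($-1$), hence CAT($0$), space), the nearest point projection onto a closed convex set is $1$-Lipschitz. I would recall the convexity-of-distance argument: given $x,y$, let $x'=r(x)$, $y'=r(y)$; the geodesic from $x'$ to $y'$ lies in $K$, and by the first-variation/angle comparison in nonpositive curvature, the angle at $x'$ between $[x',x]$ and $[x',y']$ is at least $\pi/2$ (otherwise one could move along $[x',y']$ and decrease the distance to $x$, contradicting minimality), and similarly at $y'$. Then the CAT($0$) comparison inequality applied to the geodesic quadrilateral $x,x',y',y$ — or more simply the fact that a geodesic with two angles $\geq\pi/2$ to a transversal segment forces the endpoints of that transversal to be at least as far apart as the feet — gives $d(x',y')\leq d(x,y)$. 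Alternatively one can cite Bridson–Haefliger directly. I would keep this short and reference the literature, e.g.\ \cite{notesnotesThurston}.

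For smoothness in the Fuchsian case with $K=\HH^2$: here there is an explicit formula. Pick Fermi coordinates around $\HH^2\subset\HH^3$, i.e.\ write $\HH^3$ as $\HH^2\times\RR$ with metric $\cosh^2(s)\,g_{\HH^2}+ds^2$ where $s$ is signed distance to $\HH^2$. In these coordinates the nearest point retraction is simply $(p,s)\mapsto(p,0)$, which is manifestly smooth (indeed real-analytic). The one thing to verify is that these Fermi coordinates are globally defined and smooth on all of $\HH^3$, which holds because $\HH^2$ is a complete totally geodesic submanifold with trivial normal bundle, so the normal exponential map $\HH^2\times\RR\to\HH^3$ is a global diffeomorphism; this is classical. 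Then $r$ equals the composition $\HH^3\xrightarrow{\sim}\HH^2\times\RR\xrightarrow{\mathrm{pr}_1}\HH^2$, smooth as claimed.

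The main obstacle is the $1$-Lipschitz claim if one wants a fully self-contained argument, since it genuinely uses nonpositive curvature (the statement is false in positive curvature). But as the lemma itself points to \cite{notesnotesThurston} for background on the nearest point retraction, I would lean on that reference and present only the short CAT($0$) comparison argument sketched above rather than reproving it from scratch; the equivariance and the smoothness parts are then straightforward.
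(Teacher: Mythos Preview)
Your proposal is correct and follows the standard arguments. Note that the paper does not actually prove this lemma: it is stated without proof, with a reference to \cite[Section II.1.3]{notesnotesThurston} for background on the nearest point retraction, so your sketch is already more detailed than what the paper provides.
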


Thus for any $\rho$-equivariant map $\tilde{f}$, the composition $r\circ \tilde{f}$ is $\rho$-equivariant and $F(r\circ \tilde{f})\leq F(\tilde{f})$ by Lemma \ref{lemma:C-lipschitz}.
This implies that, if $(\tilde{f}_n)$ is any minimizing sequence in
$\AMap_\rho$,
then $(r\circ \tilde{f}_n)$ is still a minimizing sequence in the space of
$\rho$-equivariant Lipschitz maps from $\widetilde{S}$ to $K$.

Now let $\rho$ be a Fuchsian representation, so that $K=\HH^2$
is a $\rho$-invariant closed convex set.
If $(\tilde{f}_n)$ is any minimizing sequence in $\AMap_\rho$,
then $(r\circ\tilde{f}_n)$ is still a minimizing sequence in $\AMap_\rho$
consisting of maps that take values in $\HH^2$.
Hence, minimizers of $F$ among $\rho$-equivariant maps with values into $\HH^2$ are indeed minimizers of $F$.

We will prove in Section \ref{ssc:equations} the following characterization of $\rho$-equivariant local diffeomorphisms $\widetilde{S}\to\HH^2$ that are $F$-minimizers.

\begin{lemma}[Fuchsian minimizers are minimal Lagrangian]\label{lm:basic}
Let $\rho$ be a Fuchsian representation.
A $\rho$-equivariant local diffeomorphism $\tilde{f}:\widetilde{S}\to\HH^2$ is minimizing if and only it is minimal Lagrangian.
\end{lemma}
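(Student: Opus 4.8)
The plan is to set up a direct variational computation for maps $\tilde f:\widetilde S\to\HH^2$ and then match the resulting Euler--Lagrange equation with the standard characterization of minimal Lagrangian diffeomorphisms. Because $\rho$ is Fuchsian, a $\rho$-equivariant local diffeomorphism $\tilde f:\widetilde S\to\HH^2$ descends (in the appropriate sense) to a map between hyperbolic surfaces whose target is the hyperbolic structure $(S,h^\star)$ with holonomy $\rho$; for such a map $f:(S,h)\to(S,h^\star)$, writing $b$ for the $h$-self-adjoint positive operator with $f^*h^\star(\bullet,\bullet)=h(b\bullet,b\bullet)$, we have $\|df\|_1=\tr(b)$ by the remark relating the $1$-energy density to the $b$-operator, so $F(\tilde f)=\int_S\tr(b)\,\da$ coincides exactly with the functional $F$ from Lemma \ref{lm:F}. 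Thus the statement reduces to: \emph{a $\rho$-equivariant local diffeomorphism is an $F$-minimizer among $\rho$-equivariant maps into $\HH^2$ if and only if it is a critical point of $F$ in the sense of Lemma \ref{lm:F}}, after which Lemma \ref{lm:F} identifies critical points with minimal Lagrangian maps.

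The nontrivial direction is that a \emph{critical point} of $F$ is actually a \emph{minimizer}. First I would observe that by the nearest-point retraction discussion immediately preceding the statement, minimizing $\rho$-equivariant maps into $\HH^2$ are the same as minimizing maps in $\AMap_\rho$, so it suffices to work entirely inside $\HH^2$. The key input is convexity of $F$ along geodesic displacements, which the paper establishes in Section \ref{ssc:convexity} (and which gives Corollary \ref{cor:uniqueness-minima}): since $F$ restricted to $\rho$-equivariant maps into $\HH^2$ is convex along the natural geodesic homotopies of maps into $\HH^2$, any critical point is a global minimum. I would also need to know a minimizer exists and is a (local) diffeomorphism: existence follows from taking a minimizing sequence and using the nearest-point retraction to stay in $\HH^2$ together with the lower semicontinuity/compactness coming from the $1$-Schatten estimates in the Appendix; regularity (being a diffeomorphism isotopic to the identity) is the standard fact that an area-minimizing-type object in the given homotopy class with both metrics hyperbolic is an immersion, hence a covering, hence a diffeomorphism by degree one.

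The last step is to invoke Lemma \ref{lm:F}: a smooth diffeomorphism $f:(S,h)\to(S,h^\star)$ is minimal Lagrangian if and only if it is a critical point of $F$. Combining this with the previous paragraph — critical point $\iff$ minimizer for this convex functional — yields that a $\rho$-equivariant local diffeomorphism $\tilde f:\widetilde S\to\HH^2$ is minimizing if and only if the induced $f:(S,h)\to(S,h^\star)$ is minimal Lagrangian, which is the assertion. One should check that ``minimal Lagrangian between $(S,h)$ and $(S,h^\star)$'' is the intrinsic notion meant in the statement, i.e.\ that it does not depend on the chosen lift/normalization; this is automatic since $b$, hence $F(f)$ and the minimal Lagrangian condition, are unchanged under post-composition by isometries of $\HH^2$.

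\textbf{Main obstacle.} The delicate point is not the equivalence ``critical $\iff$ minimal Lagrangian'' (that is Lemma \ref{lm:F}, assumed) but rather ensuring the variational problem is well posed: namely that an $F$-minimizer over $\rho$-equivariant maps into $\HH^2$ exists and is genuinely a local diffeomorphism (so that Lemma \ref{lm:F} applies and so that ``critical point'' in the smooth sense makes sense), and that the convexity of $F$ along geodesic displacements — proved in Section \ref{ssc:convexity} for maps into $\HH^3$ — restricts correctly to maps into the totally geodesic $\HH^2$. Both of these are where the real work sits; the rest is bookkeeping matching the two incarnations of $F$.
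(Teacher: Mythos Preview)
Your outline is essentially correct and follows the same skeleton as the paper: convexity (Corollary~\ref{cor:uniqueness-minima}) upgrades ``critical'' to ``minimizing'', and a variational characterization identifies ``critical'' with ``minimal Lagrangian''. The difference is in how that last identification is obtained. You invoke Lemma~\ref{lm:F} as a black box. The paper instead proves Lemma~\ref{lm:basic} directly from the general Euler--Lagrange equations for immersions into $\HH^3$ (Corollary~\ref{cr:extr}): since $\tilde f$ lands in the totally geodesic $\HH^2$, the shape operator $a$ vanishes, and the system~\eqref{eq:str1}--\eqref{eq:str3} collapses to $d^\nabla b=0$, $b$ $h$-self-adjoint, $\det b=1$, which are precisely the minimal Lagrangian conditions. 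This is more self-contained (it simultaneously proves Lemma~\ref{lm:F}) and avoids any appeal to the 2D literature; your route is shorter if one is willing to import Lemma~\ref{lm:F}.

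One point to flag: your ``main obstacle'' paragraph is a red herring for this particular lemma. The statement concerns a \emph{given} local diffeomorphism $\tilde f$ and asks whether it is minimizing; you do not need existence or regularity of an abstract minimizer to prove the equivalence. Likewise, you do not need to verify that convexity restricts to $\HH^2$: ``minimizing'' means minimizing over all of $\AMap_\rho$, and Corollary~\ref{cor:uniqueness-minima} is stated for immersions into $\HH^3$, which is exactly what is required. The nearest-point retraction remark you cite already handles the reduction to $\HH^2$-valued competitors in the other direction.
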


In this setting we can interpret the result proved by Schoen in \cite{schoen:role}  in terms of the existence result of good minimizers for $F$ when $\rho$ is Fuchsian.

\begin{theorem}[Fuchsian minimizers]\label{thm:2d}
Let $\rho:\pi_1(S)\to \PSL_2(\R)$ be a Fuchsian representation.
There exists a unique smooth $\rho$-equivariant map $\tilde{f}$ which is a critical point of $F$.
Such $\tilde{f}$ takes values in $\HH^2$ and it
is the lift of the unique minimal Lagrangian map $f:S\rightarrow S^\star:=\HH^2/\rho$ between hyperbolic surfaces. As a consequence, $\tilde{f}$ is a real-analytic diffeomorphism.
\end{theorem}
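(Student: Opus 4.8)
The plan is to combine the convexity of $F$ (which gives uniqueness) with Schoen's existence result for minimal Lagrangian maps, using the reductions already established in this subsection. First I would invoke Lemma~\ref{lm:basic}: a $\rho$-equivariant local diffeomorphism $\tilde f:\widetilde S\to\HH^2$ is minimizing if and only if it is minimal Lagrangian. Then I would recall that by Schoen's theorem \cite{schoen:role} (together with the subsequent work identifying his minimizer with the minimal Lagrangian map, e.g. \cite{L5,L6}), for the pair of hyperbolic surfaces $(S,h)$ and $S^\star=\HH^2/\rho$ there is a unique minimal Lagrangian diffeomorphism $f:(S,h)\to S^\star$ isotopic to the map induced by the isomorphism $\pi_1(S)\cong\rho(\pi_1(S))$, and it is a real-analytic diffeomorphism (real-analyticity following from elliptic regularity applied to the minimal surface system, the graph of $f$ being a minimal surface in the real-analytic Riemannian manifold $S\times S^\star$). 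Its lift $\tilde f:\widetilde S\to\HH^2$ is then a smooth $\rho$-equivariant critical point of $F$.

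Next I would establish that \emph{every} smooth $\rho$-equivariant critical point of $F$ arises this way. Here is where the reductions of this subsection enter: given any such critical point $\tilde f$, by Corollary~\ref{cor:uniqueness-minima} (the convexity statement promised in Section~\ref{ssc:convexity}) $\tilde f$ is in fact a minimizer of $F$ over $\AMap_\rho$. Since $\rho$ is Fuchsian, the nearest-point retraction $r:\HH^3\to\HH^2$ is $\rho$-equivariant, smooth, and $1$-Lipschitz, so $F(r\circ\tilde f)\le F(\tilde f)$ with equality only if $\tilde f$ already takes values in $\HH^2$ — indeed $\|d(r\circ\tilde f)\|_1\le\|d\tilde f\|_1$ pointwise by Lemma~\ref{lemma:C-lipschitz}, and if $\tilde f(\widetilde S)\not\subset\HH^2$ the inequality is strict on an open set, contradicting minimality. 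Hence $\tilde f$ maps into $\HH^2$, i.e. it is a $\rho$-equivariant map $\widetilde S\to\HH^2$ which is $F$-minimizing among such maps. A minimizer of $F$ among $\rho$-equivariant maps into $\HH^2$ must be a local diffeomorphism: if $d\tilde f$ were degenerate on an open set the image would be (locally) contained in a geodesic and one could strictly decrease $F$ by a small perturbation, while $d\tilde f$ cannot degenerate on the nowhere-dense complement of the immersion locus for a critical point in the relevant sense — this point I would spell out using the Euler--Lagrange analysis of Section~\ref{ssc:equations}. Once $\tilde f$ is a $\rho$-equivariant local diffeomorphism, Lemma~\ref{lm:basic} identifies it as minimal Lagrangian, and by the uniqueness part of Schoen's theorem it coincides with the $\tilde f$ constructed above. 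This proves existence and uniqueness of the smooth $\rho$-equivariant critical point, and real-analyticity of $\tilde f$ follows from that of the minimal Lagrangian map.

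The main obstacle I anticipate is the step showing that an $F$-minimizing $\rho$-equivariant map into $\HH^2$ is automatically a \emph{local diffeomorphism} rather than merely a map of generically rank $2$: the functional $F=\int_S\|df\|_1\,\da$ is only $1$-homogeneous, hence degenerate in null directions of $df$, so a priori a minimizer could have a nontrivial degeneracy locus. Ruling this out requires either a direct perturbation argument exploiting the strict convexity of the Schatten $1$-norm away from rank-deficient matrices (as developed in Appendix~\ref{sec:app}), or appealing to the regularity theory for Schoen's variational problem which produces a genuine diffeomorphism. I would lean on the latter: identify the minimizing sequence's limit with Schoen's minimizer directly, so that smoothness and the diffeomorphism property come packaged together, and then use convexity (Corollary~\ref{cor:uniqueness-minima}) only to upgrade ``critical point'' to ``minimizer'' so that this identification applies to an arbitrary critical point. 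A secondary technical point is confirming that Schoen's harmonic-map-theoretic minimizer really is the minimal Lagrangian map in our normalization and that its real-analyticity is available in the literature in the form needed; this is standard but should be cited carefully.
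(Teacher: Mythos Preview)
Your approach is essentially the paper's own: the paper does not give a formal proof of this theorem but simply remarks that existence comes from Schoen's result and uniqueness from the convexity established in Section~\ref{ssc:convexity} (i.e.\ Corollary~\ref{cor:uniqueness-minima}). Your proposal fills in these steps correctly.

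That said, your second paragraph takes an unnecessary detour, and the obstacle you flag is not a real one. Once the Schoen lift $\tilde f_0$ is in hand as a $\rho$-equivariant critical \emph{immersion}, Corollary~\ref{cor:uniqueness-minima} directly asserts that $\tilde f_0$ is the unique critical point of $F$ in $\AMap_\rho$. You do not need to separately argue that an arbitrary critical point retracts into $\HH^2$ and is then a local diffeomorphism: the conclusions that the critical point takes values in $\HH^2$, is minimal Lagrangian, and is real-analytic follow simply because the unique critical point \emph{is} the Schoen lift, which has these properties by construction. Moreover, even along your longer route the rank-deficiency worry dissolves: in this paper ``critical point of $F$'' is only meaningful at immersions (since $F$ is only differentiable on $\AImm_\rho$), and an immersion of $\widetilde S$ into $\HH^3$ whose image lies in $\HH^2$ is automatically a local diffeomorphism onto $\HH^2$. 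So no appeal to regularity theory for degenerate minimizers is needed.
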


Uniqueness is in fact a consequence the convexity of the functional proved in Section
\ref{ssc:convexity}.

\subsection{Infinitesimal deformations}\label{ssc:infdef}

In this section we study first-order deformations of a 
(not necessarily equivariant) smooth map $\tilde{f}:\tilde S\to\HH^3$,
namely smooth paths of maps $\boldsymbol{\tilde{f}}$ such that $\tilde{f}_0=\tilde{f}$.

\subsubsection{Deformations of maps}

We recall that $\tilde{f}^*T\HH^3$ is the vector bundle on $\widetilde{S}$
consisting of pairs
$(\tilde{p},v)$, where $\tilde{p}\in\widetilde{S}$ and $v\in T_{\tilde{f}(\tilde{p})}\HH^3$.
As in Section \ref{sec:geometry}, we also denote such vector bundle by
$\Theta_{\tilde{f}}$.

\begin{defi}[Geodesic displacement]
Let $\tilde{X}$ be a smooth section of $\Theta_{\tilde{f}}$.
The {\it{geodesic displacement}} of $\tilde{f}$ along $\tilde{X}$
is the path of smooth maps
$\boldsymbol{\tilde{f}}_{\tilde{X}}:\mathbb R\times\widetilde{S}\to\HH^3$ defined as $\boldsymbol{\tilde{f}}_{\tilde{X}}(t,\tilde p):=\exp_{\tilde{f}(\tilde{p})}(t\cdot\tilde{X}(\tilde p))$.
\end{defi}

Consider any smooth deformation $\boldsymbol{\tilde{f}}=(\tilde{f}_t)_{t\in(-\e,\e)}$ of $\tilde{f}$.
The time-derivative of $\tilde{f}_t$ at $t=0$ determines a smooth section $\tilde{X}$ of $\Theta_{\tilde{f}}$, called the {\it{variational field}} of $\boldsymbol{\tilde{f}}$. 
Conversely, any smooth section $\tilde{X}$ of $\Theta_{\tilde{f}}$ is the variational field
of a smooth deformation of $\tilde{f}$: for example,
$\tilde{X}$ is the variational field of the geodesic displacement $\boldsymbol{\tilde{f}}_{\tilde{X}}$.

We say that two deformations of $\tilde{f}$ agree to first-order if they have the same
variational fields.
Thus, we identify first-order deformations of $\tilde{f}$ with their variational fields.

\begin{remark}
Even if we will not formalise this aspect, we can roughly say that the space of smooth sections
$\Gamma(\Theta_{\tilde{f}})$ can be identified to the tangent space at $\tilde{f}$ of the space of smooth maps $\widetilde{S}\to\HH^3$.
\end{remark}


Since the exponential map of the hyperbolic space induces a diffeomorphism between 
$\HH^3$ and $T_x\HH^3$ for any $x\in\HH^3$, we get the following property.

\begin{lemma}[Uniqueness of geodesic displacements between given maps]
If $\tilde{f}_0, \tilde{f}_1$ are $\rho$-equivariant smooth maps, then there is a unique first-order 
smooth deformation $\tilde{X}$ of $\tilde{f}_0$ such that
$\boldsymbol{\tilde{f}}_{\tilde{X}}(1, \tilde{p})=\tilde{f}_1(p)$.
\end{lemma}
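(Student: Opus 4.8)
The plan is to use the fact that $\HH^3$ is a Cartan--Hadamard manifold, so that for each point $x\in\HH^3$ the exponential map $\exp_x\colon T_x\HH^3\to\HH^3$ is a diffeomorphism; let $\log_x\colon\HH^3\to T_x\HH^3$ denote its inverse. First I would \emph{define} the candidate first-order deformation pointwise: for each $\tilde p\in\widetilde S$, set
\[
\tilde X(\tilde p):=\log_{\tilde f_0(\tilde p)}\bigl(\tilde f_1(\tilde p)\bigr)\in T_{\tilde f_0(\tilde p)}\HH^3=\Theta_{\tilde f_0,\tilde p}.
\]
This is exactly the unique tangent vector at $\tilde f_0(\tilde p)$ whose time-$1$ geodesic displacement lands at $\tilde f_1(\tilde p)$, which is what the conclusion $\boldsymbol{\tilde f}_{\tilde X}(1,\tilde p)=\tilde f_1(\tilde p)$ demands; uniqueness of $\tilde X$ at each point is immediate from injectivity of $\exp_x$. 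So the content of the lemma is really smoothness and equivariance.

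Next I would check smoothness: the map $(\tilde p)\mapsto\tilde X(\tilde p)$ is the composition of $\tilde p\mapsto(\tilde f_0(\tilde p),\tilde f_1(\tilde p))$, which is smooth as a map $\widetilde S\to\HH^3\times\HH^3$, with the map $(x,y)\mapsto\log_x(y)\in T_x\HH^3$, which is a smooth map on $\HH^3\times\HH^3$ (the ``difference'' map associated to the geodesic flow on a Cartan--Hadamard manifold, smooth because $\exp$ is a diffeomorphism depending smoothly on the base point). Phrasing this in terms of the bundle: $\tilde X$ is a smooth section of $\Theta_{\tilde f_0}=\tilde f_0^*T\HH^3$. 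Then equivariance: since $\tilde f_0$ and $\tilde f_1$ are both $\rho$-equivariant, for $\gamma\in\pi_1(S)$ we have $\tilde f_i(\gamma\tilde p)=\rho(\gamma)\tilde f_i(\tilde p)$, and since $\rho(\gamma)$ is an isometry of $\HH^3$ it commutes with $\exp$ and hence with $\log$: $\log_{\rho(\gamma)x}(\rho(\gamma)y)=d\rho(\gamma)\bigl(\log_x(y)\bigr)$. Applying this with $x=\tilde f_0(\tilde p)$, $y=\tilde f_1(\tilde p)$ gives $\tilde X(\gamma\tilde p)=d\rho(\gamma)\tilde X(\tilde p)$, i.e.\ $\tilde X$ is equivariant in the appropriate sense for a section of $\Theta_{\tilde f_0}$; thus the geodesic displacement $\boldsymbol{\tilde f}_{\tilde X}(t,\cdot)$ is $\rho$-equivariant for every $t$, and in particular $\boldsymbol{\tilde f}_{\tilde X}$ is a path of $\rho$-equivariant maps. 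Finally, by construction $\boldsymbol{\tilde f}_{\tilde X}(0,\tilde p)=\exp_{\tilde f_0(\tilde p)}(0)=\tilde f_0(\tilde p)$ and $\boldsymbol{\tilde f}_{\tilde X}(1,\tilde p)=\exp_{\tilde f_0(\tilde p)}(\log_{\tilde f_0(\tilde p)}\tilde f_1(\tilde p))=\tilde f_1(\tilde p)$.

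There is no serious obstacle here; the only point requiring a word of care is that ``first-order deformation'' has been defined as an equivalence class of paths sharing a variational field, so uniqueness must be read at the level of variational fields: if $\tilde Y$ is another section of $\Theta_{\tilde f_0}$ with $\boldsymbol{\tilde f}_{\tilde Y}(1,\tilde p)=\tilde f_1(\tilde p)$ for all $\tilde p$, then $\exp_{\tilde f_0(\tilde p)}(\tilde Y(\tilde p))=\exp_{\tilde f_0(\tilde p)}(\tilde X(\tilde p))$, and injectivity of $\exp_{\tilde f_0(\tilde p)}$ forces $\tilde Y=\tilde X$ pointwise, hence the two geodesic displacements coincide and \emph{a fortiori} have the same variational field. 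So the statement follows from the diffeomorphism property of the hyperbolic exponential map together with the naturality of $\exp$ under isometries, exactly as the sentence preceding the lemma in the text anticipates.
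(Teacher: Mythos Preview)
Your proof is correct and follows exactly the same approach as the paper: define $\tilde X(\tilde p)=(\exp_{\tilde f_0(\tilde p)})^{-1}(\tilde f_1(\tilde p))$ and invoke the diffeomorphism property of the hyperbolic exponential map. The paper's proof is a one-liner giving just this formula, whereas you additionally spell out smoothness, equivariance, and the uniqueness argument explicitly---all reasonable elaborations of what the paper leaves as ``immediate.''
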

\begin{proof}
It is immediate to check that the only infinitesimal deformation of $\tilde{f}_0$ with the stated properties is defined by the formula
 $\tilde{X}(\tilde{p}):=(\exp_{\tilde{f}_0(\tilde{p})})^{-1}(\tilde{f}_1(\tilde{p}))$.
\end{proof}

\subsubsection{Deformation of representations}

Let  $\boldsymbol{\rho}$ is a smooth deformation of the representation
$\rho\in\cX$, namely a smooth path $\boldsymbol{\rho}:(-\epsilon,\epsilon)\to\cX$
such that $\rho_0=\rho$.

For every $\gamma\in\pi_1(S)$, let $\dotr_\gamma\in\mathfrak{sl}_2(\CC)$ be defined as
\[
\dotr_\gamma:= \frac{d}{dt}\rho_t(\gamma)\rho(\gamma)^{-1}\Big|_{t=0}
\]
We recall that $\mathfrak{sl}(2,\CC)$ can be identified to the Lie algebra of Killing vector fields on $\HH^3$.
Under this identification,
$\dotr_\gamma$ can be regarded as the Killing vector field over $\HH^3$
whose value at $x\in\HH^3$ is the velocity of the curve 
$t\mapsto\rho_t(\gamma)\rho(\gamma)^{-1}(x)$ at time $t=0$.

To the given deformation $\boldsymbol{\rho}$ of $\rho$
we can associate the $\mathfrak{sl}(2,\CC)$-valued function
\[
    \dotr:
    \xymatrix@R=0in{
    \pi_1(S)\ar[r] &\mathfrak{sl}(2,\CC) \\
    \gamma\ar@{|->}[r] & \dotr_\gamma
    }
    \]

It is easy to check that, since $\rho_t$ is a representation for all $t$,
the function $\dotr$ satisfies 
the condition
\begin{equation}\label{eq:cocycle}
\dotr_{\gamma\eta}=\dotr_\gamma+ \mathrm{Ad}_{\rho(\gamma)}\dotr_\eta.
\end{equation}

\begin{defi}[$\rho$-twisted $1$-cocycles]
A {\it{$\mathfrak{sl}_2(\CC)$-valued $\rho$-twisted $1$-cocycle}}
is a function $\dotr:\pi_1(S)\rar\mathfrak{sl}_2(\CC)$ that satisfies \eqref{eq:cocycle}.
The vector space of such functions is denoted by $\coc^1_\rho$.
\end{defi}

We say that two smooth deformations of $\rho$
agree to first order if their associated $1$-cocycles agree.
The following is rather classical and we state it without proof.

\begin{lemma}[Deformations of representations as cocycles]\label{lemma:def-rho}
The map
\[
\left\{\text{first-order deformations of $\rho$}\right\}\lra
\coc^1_\rho
\]
that sends a first-order deformation of $\rho$ to its associated $1$-cocycle
is a bijection.
\end{lemma}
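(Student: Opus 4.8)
The plan is as follows. Injectivity (and well-definedness on first-order deformations) is a tautology: we have \emph{defined} two smooth deformations of $\rho$ to agree to first order exactly when their associated $1$-cocycles coincide, so the only thing left to prove is surjectivity, namely that every $\dotr\in\coc^1_\rho$ arises as the $1$-cocycle of some smooth path $t\mapsto\rho_t$ of representations with $\rho_0=\rho$.

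To prove surjectivity I would realise $\coc^1_\rho$ as the tangent space of the representation variety at $\rho$. Concretely: since $S$ is compact with $\chi(S)<0$, the group $\pi_1(S)$ is finitely presented; fix generators $\gamma_1,\dots,\gamma_n$ and relators $r_1,\dots,r_m$, so that $\Hom(\pi_1(S),\PSL_2(\CC))$ is the real algebraic subset of $\PSL_2(\CC)^n$ cut out by the equations $r_j\big(\rho(\gamma_1),\dots,\rho(\gamma_n)\big)=\mathrm{id}$. Identifying $T_\rho\,\PSL_2(\CC)^n$ with $\mathfrak{sl}_2(\CC)^n$ by right translation, the differential at $\rho$ of the relator map $\big(\rho(\gamma_i)\big)_i\mapsto\big(r_j(\rho(\gamma_i))\big)_j$ is computed by the same manipulation that produced \eqref{eq:cocycle}, and its kernel is precisely the set of tuples $\big(\dotr_{\gamma_i}\big)_i$ that extend to a $\rho$-twisted $1$-cocycle; evaluating a cocycle on the generators identifies this kernel with $\coc^1_\rho$. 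Thus $\coc^1_\rho$ is the Zariski tangent space of $\Hom(\pi_1(S),\PSL_2(\CC))$ at $\rho$, and once we know that $\rho$ is a smooth point of this variety with $\dim_\RR T_\rho\Hom=\dim_\RR\coc^1_\rho$, every cocycle is the initial velocity of a smooth path $\rho_t$ through $\rho$; since being non-elementary is an open condition, $\rho_t$ stays non-elementary for small $t$, so it is a deformation of $\rho$ in the required sense, and by construction its associated cocycle is $\dotr$.

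The one non-formal ingredient — and the main point to nail down — is that $\rho$ is indeed a smooth point of $\Hom(\pi_1(S),\PSL_2(\CC))$ of the expected dimension, equivalently that there is no obstruction to integrating a first-order deformation. This is exactly where the hypothesis $\rho\in\cX$ enters: a non-elementary representation is irreducible, so its centraliser in $\mathfrak{sl}_2(\CC)$ vanishes, hence $H^0(\pi_1(S);\mathfrak{sl}_2(\CC)_{\mathrm{Ad}\,\rho})=0$ and, by Poincaré duality for the closed surface $S$, also $H^2=0$; standard deformation theory (Weil, Goldman) then yields smoothness of the representation variety at $\rho$ together with the dimension count $\dim_\CC T_\rho\Hom=3-3\chi(S)$. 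Alternatively, and more by hand, one can define $\rho_t$ directly on generators by $\rho_t(\gamma_i):=\exp(t\,\dotr_{\gamma_i})\,\rho(\gamma_i)$, check that each relator $r_j$ is satisfied to first order in $t$ (this is precisely the cocycle condition \eqref{eq:cocycle}), and then correct the higher-order error terms by an implicit-function-theorem argument resting on the surjectivity of the relator differential onto the tangent space of the relator image — a surjectivity which again follows from $H^2=0$. Either way, the only genuine work is the vanishing $H^2=0$ and its deformation-theoretic consequence; everything else is bookkeeping.
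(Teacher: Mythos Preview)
The paper does not prove this lemma: it explicitly says ``The following is rather classical and we state it without proof.'' Your proposal supplies exactly the standard argument---identifying $\coc^1_\rho$ with the Zariski tangent space of $\Hom(\pi_1(S),\PSL_2(\CC))$ at $\rho$, and using irreducibility of a non-elementary $\rho$ together with Poincar\'e duality to get $H^2(\pi_1(S);\mathfrak{sl}_2(\CC)_{\mathrm{Ad}\,\rho})=0$, hence smoothness and integrability of every cocycle---and it is correct.
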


Because of the above lemma,
we will identify the first-order deformation $\boldsymbol{\rho}$ of $\rho$
with its associated cocycle function $\dotr$.

The following fact, which we state without proof, will be useful later.

\begin{lemma}[Existence of an equivariant deformation]\label{lemma:exist-def}
Let $(\tilde{f},\rho)$ be an equivariant map and let
$\boldsymbol{\rho}$ be a deformation of $\rho$.
Then there exists a deformation $\boldsymbol{\tilde{f}}:(-\epsilon,\epsilon)\times\widetilde{S}\rar\HH^3$ of $\tilde{f}$ such that $\tilde{f}_t$ is $\rho_t$-equivariant
for all $t\in(-\epsilon,\epsilon)$.
\end{lemma}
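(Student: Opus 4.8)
The plan is to construct the equivariant deformation $\boldsymbol{\tilde{f}}$ explicitly by combining a path of equivariant maps realizing the infinitesimal behaviour of $\boldsymbol{\rho}$ with a correction that makes equivariance exact at all times. First I would fix a smooth fundamental domain type structure: choose a smooth partition of unity $\{\psi_\gamma\}_{\gamma\in\pi_1(S)}$ on $\widetilde{S}$ subordinate to the translates $\{\gamma\cdot \Omega\}$ of a relatively compact open set $\Omega$ whose translates cover $\widetilde{S}$ (so that the sum is locally finite), with $\sum_\gamma \psi_\gamma\equiv 1$ and $\psi_\gamma\circ\eta = \psi_{\eta^{-1}\gamma}$. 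This lets one average any family of ``local'' maps into a genuinely equivariant one. Using Lemma \ref{lemma:def-rho} we already identify $\boldsymbol{\rho}$ (to first order, and after shrinking $\epsilon$, as an honest smooth path into $\cX$) with the cocycle $\dotr$.

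Next I would define, for each $\gamma$, the map $\tilde{f}^\gamma_t := \rho_t(\gamma)\circ \tilde{f}\circ \gamma^{-1}:\widetilde{S}\to\HH^3$, which is the ``naive'' guess for how an equivariant deformation should restrict to the tile $\gamma\cdot\Omega$. Each $\tilde{f}^\gamma_t$ depends smoothly on $(t,\tilde{p})$, and $\tilde{f}^\gamma_0 = \rho(\gamma)\tilde{f}\gamma^{-1} = \tilde{f}$ by equivariance of $\tilde{f}$. Then set
\[
  \tilde{f}_t(\tilde{p}) := \mathrm{bar}\Big(\big\{(\psi_\gamma(\tilde{p}),\, \tilde{f}^\gamma_t(\tilde{p}))\big\}_{\gamma}\Big),
\]
the weighted Riemannian barycenter (center of mass) in $\HH^3$ of the finitely many points $\tilde{f}^\gamma_t(\tilde{p})$ with weights $\psi_\gamma(\tilde{p})$; since $\HH^3$ is Cartan--Hadamard this barycenter is well-defined and smooth in all data. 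For $t=0$ all the points coincide with $\tilde{f}(\tilde{p})$, so $\tilde{f}_0=\tilde{f}$. Equivariance $\tilde{f}_t\circ\eta = \rho_t(\eta)\circ\tilde{f}_t$ follows because $\rho_t(\eta)$ is an isometry of $\HH^3$ (so it commutes with taking barycenters), together with the identities $\tilde{f}^\gamma_t\circ\eta = \rho_t(\eta)\circ\tilde{f}^{\,\eta^{-1}\gamma}_t$ and $\psi_\gamma\circ\eta=\psi_{\eta^{-1}\gamma}$, which make the reindexing $\gamma\mapsto\eta^{-1}\gamma$ match up. Smoothness of $(t,\tilde{p})\mapsto\tilde{f}_t(\tilde{p})$ is local and follows from smooth dependence of the barycenter map on finitely many points and weights varying smoothly, using local finiteness of the cover. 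Finally, shrinking $\epsilon$ if necessary guarantees $\rho_t\in\cX$ stays non-elementary along the path, so $(\tilde{f}_t,\rho_t)$ is a genuine equivariant map for each $t$ and $\boldsymbol{\tilde{f}}$ is the desired deformation of $\tilde{f}$.

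The main obstacle I expect is the bookkeeping needed to verify exact equivariance of the barycenter construction for all $t$ simultaneously (not just to first order): one must be careful that the partition of unity transforms correctly under $\pi_1(S)$ and that the reindexing in the equivariance computation is done cleanly, and one should check that the barycenter of a finite weighted point configuration in a Cartan--Hadamard manifold depends smoothly (indeed real-analytically) on the points and weights, which is standard (it is the unique critical point of a strictly convex function, with nondegenerate Hessian, so the implicit function theorem applies) but worth citing. An alternative, slightly softer route avoiding barycenters would be: pick any smooth $\tilde{g}_t$ with $\tilde{g}_0=\tilde{f}$ whose variational field is a section of $\Theta_{\tilde{f}}$ lifting the infinitesimal motion prescribed by $\dotr$, then correct it order by order; but the barycenter approach has the advantage of producing an honestly equivariant family in one stroke, which is why I would present it as the main argument.
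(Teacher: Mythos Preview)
The paper states this lemma explicitly without proof, so there is no argument to compare against. Your barycenter averaging construction is correct and is in fact the standard way to prove such existence statements: the equivariant partition of unity $\psi_\gamma(\tilde p)=\chi(\gamma^{-1}\tilde p)/\sum_\beta\chi(\beta^{-1}\tilde p)$ has the transformation law you need, the identity $\tilde f^\gamma_t\circ\eta=\rho_t(\eta)\circ\tilde f^{\,\eta^{-1}\gamma}_t$ is immediate from $\rho_t$ being a homomorphism, and the isometry-equivariance of the Riemannian barycenter in a Cartan--Hadamard manifold then gives exact $\rho_t$-equivariance of $\tilde f_t$ for every $t$, not just to first order.

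Two cosmetic remarks. First, your appeal to Lemma~\ref{lemma:def-rho} and the cocycle $\dotr$ is not actually used anywhere in the construction: the argument works directly with the smooth path $t\mapsto\rho_t$ of honest representations (implicitly lifted from $\cX$), so you can drop that sentence. Second, the final clause about shrinking $\epsilon$ to keep $\rho_t$ non-elementary is unnecessary, since $\boldsymbol\rho$ is by hypothesis already a path in $\cX$. Neither point affects correctness.
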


\subsubsection{Deformation of equivariant maps}

If $\tilde{f}$ is an equivariant map with monodromy $\rho$, there is a natural action of $\pi_1(S)$ on $\Theta_{\tilde{f}}$, plays an important role to detect the variational fields of deformations through equivariant maps.

Namely, if $\tilde{X}\in\Gamma(\Theta_{\tilde{f}})$ and $\gamma\in\pi_1(S)$, we set
\[
  \gamma_*\tilde{X}(\tilde{p}):=d(\rho(\gamma))_{\tilde{f}(\gamma^{-1}(\tilde{p}))} \tilde{X}(\gamma^{-1}(\tilde{p}))
\]
for every $\tilde{p}\in\widetilde{S}$. 
We denote by $\Theta_f$ the vector bundle on $S$ obtained as the quotient
of $\Theta_{\tilde{f}}$ by the action of $\pi_1(S)$.
Quite similarly, given a deformation $(\boldsymbol{\tilde{f}},\boldsymbol{\rho})_{t\in(-\epsilon,\epsilon)}$ of $(\tilde{f},\rho)$, we denote by $\boldsymbol{\Theta_{f}}$ the vector bundle
on $(-\epsilon,\epsilon)\times S$ defined as the quotient of $\boldsymbol{\Theta_{\tilde{f}}}$
by the action of $\pi_1(S)$.

\begin{notation}
In general, the symbol $\Theta_f$ is defined without using an $f$. However, if
$\tilde{f}$ is the lift of a map
$f:S\rar M$ to a hyperbolic $3$-manifold $M$, then
$\Theta_f$ identifies to the pull-back of $TM$ via $f$.
The same considerations hold for a vector bundle of type $\boldsymbol{\Theta_{f}}$.
\end{notation}

The vector space $\Gamma(\Theta_f)$ can be identified
to the space $\Gamma(\Theta_{\tilde{f}})^\rho$
of $\rho$-invariant elements in $\Gamma(\Theta_{\tilde{f}})$.
Thus sections $X$ of $\Gamma(\Theta_f)$ correspond
to $\rho$-invariant sections $\tilde{X}$ of $\Theta_{\tilde{f}}$

\begin{lemma}[Isomonodromic deformations]
Let $(\tilde{f},\rho)$ be a smooth equivariant map.
Then the map 
\[
\left\{
\begin{array}{c}
\text{first-order deformations}\\
\text{of $(\tilde{f},\rho)$ inside $\AMap_\rho$}
\end{array}
\right\}
\lra
\Gamma(\Theta_{\tilde{f}})^\rho
\]
that sends
a first-order deformations of $(\tilde{f},\rho)$ inside $\AMap_\rho$
to its corresponding ($\rho$-invariant) variational field $\tilde{X}$ is a bijection.
\end{lemma}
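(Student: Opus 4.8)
The plan is to check the three things that make the displayed arrow a well-defined bijection: that the variational field of an isomonodromic path is $\rho$-invariant (so the map is well defined with the stated target), that it is injective, and that it is surjective. Injectivity will be essentially tautological given the identification of first-order deformations with variational fields recalled above, so the content is the well-definedness and the surjectivity.

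First I would treat well-definedness. Let $(\boldsymbol{\tilde f},\rho)$ be a path in $\AMap_\rho$ deforming $(\tilde f,\rho)$, so that $\tilde f_t(\gamma\tilde p)=\rho(\gamma)\,\tilde f_t(\tilde p)$ for every $t\in(-\e,\e)$, every $\gamma\in\pi_1(S)$ and every $\tilde p\in\widetilde S$. Differentiating this identity in $t$ at $t=0$, and using that $\rho(\gamma)$ does not depend on $t$, one gets $\tilde X(\gamma\tilde p)=d(\rho(\gamma))_{\tilde f(\tilde p)}\tilde X(\tilde p)$, where $\tilde X$ is the variational field. Replacing $\tilde p$ by $\gamma^{-1}\tilde p$, this is exactly the relation $\gamma_*\tilde X=\tilde X$ in the notation for the $\pi_1(S)$-action on $\Theta_{\tilde f}$ introduced above, so $\tilde X\in\Gamma(\Theta_{\tilde f})^\rho$. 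Injectivity is then immediate: by the convention recalled above, two deformations are identified as first-order deformations precisely when they have the same variational field.

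For surjectivity, given $\tilde X\in\Gamma(\Theta_{\tilde f})^\rho$ I would take the geodesic displacement $\boldsymbol{\tilde f}_{\tilde X}(t,\tilde p):=\exp_{\tilde f(\tilde p)}(t\,\tilde X(\tilde p))$, which is smooth on $\RR\times\widetilde S$ and whose variational field at $t=0$ is $\tilde X$. It remains to check that $\tilde f_t:=\boldsymbol{\tilde f}_{\tilde X}(t,\bullet)$ lies in $\AMap_\rho$, i.e.\ is $\rho$-equivariant. Since each $\rho(\gamma)$ is an isometry of $\HH^3$, it intertwines exponential maps: $\rho(\gamma)\bigl(\exp_x(v)\bigr)=\exp_{\rho(\gamma)x}\bigl(d(\rho(\gamma))_x v\bigr)$ for all $x\in\HH^3$ and $v\in T_x\HH^3$. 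Applying this with $x=\tilde f(\tilde p)$ and $v=t\,\tilde X(\tilde p)$, and using $\rho$-invariance of $\tilde X$ in the form $d(\rho(\gamma))_{\tilde f(\tilde p)}\tilde X(\tilde p)=\tilde X(\gamma\tilde p)$, we obtain
$\rho(\gamma)\,\tilde f_t(\tilde p)=\exp_{\tilde f(\gamma\tilde p)}\bigl(t\,\tilde X(\gamma\tilde p)\bigr)=\tilde f_t(\gamma\tilde p)$,
so $\tilde f_t$ is $\rho$-equivariant, as required; thus $\tilde X$ is the variational field of a deformation of $(\tilde f,\rho)$ inside $\AMap_\rho$.

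No genuine obstacle arises here: the only point requiring care is to match the bookkeeping of the $\pi_1(S)$-action on $\Theta_{\tilde f}$ (defined through $\rho$ rather than through an ambient $3$-manifold) with the naive equivariance of the maps $\tilde f_t$, and this is handled in one direction by differentiating the equivariance relation and in the other by the isometry–exponential intertwining identity. One could alternatively obtain surjectivity by invoking the lemma on uniqueness of geodesic displacements between given maps to represent an arbitrary $\rho$-equivariant path, but the direct construction via geodesic displacement is the cleanest route.
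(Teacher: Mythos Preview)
Your proof is correct and follows essentially the same approach as the paper: differentiate the equivariance relation to see that the variational field of an isomonodromic deformation is $\rho$-invariant, and use the geodesic displacement along a given $\rho$-invariant $\tilde X$ (together with the fact that isometries commute with the exponential map) to produce a $\rho$-equivariant deformation realizing $\tilde X$. Your write-up is more explicit than the paper's, but the argument is the same.
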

\begin{proof}
Consider a deformation $\boldsymbol{\tilde{f}}$ of $\tilde{f}$ inside $\AMap_\rho$.
Since all $\tilde{f}_t$ are $\rho$-equivariant, so is the corresponding variational field $\tilde{X}$,
which thus belongs to $\Gamma(\Theta_{\tilde{f}})^\rho$.
Vice versa, given $\tilde{X}\in \Gamma(\Theta_{\tilde{f}})^\rho$, we let $\boldsymbol{\tilde{f}}$
be the geodesic displacement associated to $\tilde{X}$. Since $\tilde{X}$ is $\rho$-invariant,
so are the maps $\tilde{f}_t$ for all $t$. Hence, $\boldsymbol{\tilde{f}}$ is a smooth
deformation of $\tilde{f}$ with fixed monodromy $\rho$ and variational field $\tilde{X}$.
\end{proof}

Consider now a smooth deformation $(\boldsymbol{\tilde{f}},\boldsymbol{\rho})$ of $(\tilde{f},\rho)$
in which the monodromy $\rho_t$ need not be the same at all $t$.
Let $\tilde{X}$ be the variational field of $\boldsymbol{\tilde{f}}$ and $\dotr$
the $1$-cocycle attached to $\boldsymbol{\rho}$.
For every $\gamma\in\pi_1(S)$, differentiating the relation
$\rho_t(\gamma)\circ\tilde{f}_t=\tilde{f}_t\circ\gamma$ we obtain
\begin{equation}\label{eq:vect-equiv}
\gamma_* \tilde{X}=\tilde{X}+\dotr_\gamma\big|_{\widetilde{S}}
\end{equation}
where $\dotr_\gamma\Big|_{\widetilde{S}}$ 
is pull-back to $\widetilde{S}$ of the Killing vector field $\dotr_\gamma$ in $\HH^3$,
viewed as a section of $\tilde{f}^*T\HH^3$.

\begin{defi}[$1$-cocycle attached to a deformation of an equivariant map]
A {\it{$1$-cocycle}} associated to $(\tilde{f},\rho)$
is a couple $(\tilde{X},\dotr)$ such that $\dotr\in\coc^1_\rho$ and
Equation \eqref{eq:vect-equiv} is satisfied. Such vector space
of $1$-cocycles is denoted by $\coc^1_{(\tilde{f},\rho)}$.
\end{defi}

\begin{lemma}[First-order deformations of equivariant maps]\label{lm:first-order}
Let $(\tilde{f}, \rho)$ be a smooth equivariant map. 
The application
\[
\left\{\text{first-order equivariant deformations of $(\tilde{f},\rho)$}\right\}
\lra \coc^1_{(\tilde{f},\rho)} 
\]
that sends a first-order deformation to its associated
$1$-cocycle is a bijection.
\end{lemma}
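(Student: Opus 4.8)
The plan is to construct the two-sided inverse of the map explicitly and check that the constructions are mutually inverse. Given a first-order equivariant deformation of $(\tilde f,\rho)$ — that is, a smooth path $(\boldsymbol{\tilde f},\boldsymbol{\rho})$ with $(\tilde f_0,\rho_0)=(\tilde f,\rho)$ — we have already attached to it the variational field $\tilde X\in\Gamma(\Theta_{\tilde f})$ of $\boldsymbol{\tilde f}$ and the $1$-cocycle $\dotr\in\coc^1_\rho$ of $\boldsymbol{\rho}$ (via Lemma \ref{lemma:def-rho}). Differentiating the equivariance relation $\rho_t(\gamma)\circ\tilde f_t=\tilde f_t\circ\gamma$ at $t=0$, exactly as in the derivation of \eqref{eq:vect-equiv}, shows that the pair $(\tilde X,\dotr)$ satisfies \eqref{eq:vect-equiv}, hence lies in $\coc^1_{(\tilde f,\rho)}$. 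This defines the map in the statement and shows it is well-defined; moreover it factors through the notion of ``first-order agreement'' since both $\tilde X$ and $\dotr$ depend only on first-order data.

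For \emph{injectivity}: two first-order equivariant deformations with the same associated $1$-cocycle have, in particular, the same variational field $\tilde X$, hence agree to first order as deformations of the underlying map $\tilde f$; and they have the same $\dotr$, hence agree to first order as deformations of $\rho$. By our convention, first-order deformations of $(\tilde f,\rho)$ are identified with the pair (variational field, cocycle of the monodromy), so the two deformations coincide. For \emph{surjectivity}: given $(\tilde X,\dotr)\in\coc^1_{(\tilde f,\rho)}$, first use Lemma \ref{lemma:def-rho} to produce a deformation $\boldsymbol{\rho}$ of $\rho$ with attached cocycle $\dotr$, and then Lemma \ref{lemma:exist-def} to produce a deformation $\boldsymbol{\tilde f}$ of $\tilde f$ with $\tilde f_t$ being $\rho_t$-equivariant for all $t$; this is already an equivariant deformation. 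Let $\tilde Y$ be its variational field. Differentiating equivariance gives that $(\tilde Y,\dotr)$ satisfies \eqref{eq:vect-equiv}, so $(\tilde X-\tilde Y,0)\in\coc^1_{(\tilde f,\rho)}$, which by \eqref{eq:vect-equiv} with $\dotr=0$ means $\tilde X-\tilde Y\in\Gamma(\Theta_{\tilde f})^\rho$. Now correct $\boldsymbol{\tilde f}$ by composing with the geodesic displacement along $\tilde X-\tilde Y$: since $\tilde X-\tilde Y$ is $\rho$-invariant, this keeps all $\tilde f_t$ being $\rho_t$-equivariant (the monodromy path is unchanged), and it adds $\tilde X-\tilde Y$ to the variational field, producing a deformation with variational field $\tilde X$ and monodromy cocycle $\dotr$, as required.

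The only genuinely delicate point is the precise sense in which ``first-order equivariant deformation'' is identified with its pair of data, so that injectivity is not a tautology but also not something requiring work beyond bookkeeping; everything else is an application of the two cited lemmas plus the geodesic-displacement correction, which is legitimate because $\HH^3$ is geodesically complete and the exponential map is a diffeomorphism on each tangent space (as already used above). I expect the main obstacle to be purely expository: setting up the correction step so that equivariance is manifestly preserved at every time $t$, rather than only to first order.
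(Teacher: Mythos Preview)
Your overall strategy matches the paper's: for surjectivity, build a $\boldsymbol{\rho}$-equivariant deformation via Lemma~\ref{lemma:exist-def}, compare its variational field $\tilde Y$ to the target $\tilde X$, observe that $\tilde X-\tilde Y$ is $\pi_1(S)$-invariant, and then correct. The gap is in your correction step, and it is not purely expository.

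You write that ``composing with the geodesic displacement along $\tilde X-\tilde Y$ \dots keeps all $\tilde f_t$ being $\rho_t$-equivariant'' because $\tilde X-\tilde Y$ is $\rho$-invariant. But $\tilde X-\tilde Y$ is a section of $\Theta_{\tilde f_0}=\tilde f_0^*T\HH^3$, and it is $\rho_0$-invariant; it does not make sense as a vector field along $\tilde f_t$ for $t\neq 0$, and $\rho_0$-invariance says nothing about $\rho_t$-equivariance. (Indeed, the paper warns just before the proof that a geodesic displacement along a field satisfying \eqref{eq:vect-equiv} need not produce a path of maps equivariant for any deformation of $\rho$.) What is actually needed is to promote the single invariant section $Y_0:=\tilde X-\tilde Y\in\Gamma(\Theta_{f_0})$ to a \emph{smooth family} $Y_t\in\Gamma(\Theta_{f_t})$ with each $Y_t$ being $\rho_t$-invariant, and then set $\tilde f'_t(\tilde p):=\exp_{\tilde f_t(\tilde p)}\bigl(t\cdot \tilde Y_t(\tilde p)\bigr)$. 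The paper does precisely this: it observes that the bundle $\boldsymbol{\Theta_f}$ over $(-\epsilon,\epsilon)\times S$ is smoothly isomorphic to the product $(-\epsilon,\epsilon)\times\Theta_{f_0}$, so the section $Y_0$ at time $0$ extends to a smooth section $\boldsymbol{Y}$; lifting back to $\widetilde S$ gives $\rho_t$-invariant $\tilde Y_t$, and the displayed formula is then manifestly $\rho_t$-equivariant with variational field $\tilde Y+Y_0=\tilde X$. Once you insert this extension step, your argument is the paper's argument.
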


%
%
We have already seen how to associate a cocycle to a deformation: such application
is injective essentially by definition.
Surjectivity of such map is more subtle. The point is  that  it is not true in general that, if $\tilde{X}$ is a variational field on $\tilde{f}$ which satisfies \eqref{eq:vect-equiv} for some $\dotr$, then the geodesic displacement $\boldsymbol{\tilde{f}}_{\tilde{X}}$ is a family of maps
that are equivariant with respect to some deformation $\boldsymbol{\rho}$ of $\rho$. 

\begin{proof}
Let $(\tilde{X},\dotr)$ be an element of $\coc^1_{(\tilde{f},\rho)}$
and let $\boldsymbol{\rho}$ be a deformation of $\rho$ with associated $1$-cocycle $\dotr$.
By Lemma \ref{lemma:exist-def}, there exists
$\boldsymbol{\tilde{\phi}}:(-\epsilon,\epsilon)\times\widetilde{S}\rar\HH^3$
that deforms $\tilde{\phi}$ and such that $\tilde{\phi}_t$ is $\rho_t$-equivariant.
Let $\tilde{X}^\star$ 
be the variational field of $\boldsymbol{\tilde{\phi}}$.
Since both $\tilde{X}$ and $\tilde{X}^\star$ solve Equation \eqref{eq:vect-equiv} with respect to $\rho$ and $\dotr$,
their difference $\tilde{Y}_0:=\tilde{X}-\tilde{X}^\star$ is $\pi_1(S)$-invariant, and so descends to a section $Y_0$ of $\Theta_\phi$.
Since $\boldsymbol{\Theta_{\phi}}$ is smoothly isomorphic to $(-\epsilon,\epsilon)\times \Theta_{\phi}$,
the section $Y_0$ of $\{0\}\times \Theta_{\phi}$ can be extended to a smooth section
$\boldsymbol{Y}$ of $\boldsymbol{\Theta_{\phi}}$.
Consider now the path of maps $\boldsymbol{\tilde{f}}:(-\epsilon,\epsilon)\times\widetilde{S}\rar\HH^3$ defined as
$\tilde{f}_t(\tilde{p}):=\exp_{\tilde{\phi}_t(\tilde{p})}(t\cdot \tilde{Y}_t(\tilde{p}))$
Such family $(\boldsymbol{\tilde{f}},\boldsymbol{\rho})$ is a smooth deformation
of $(\tilde{f},\rho)$ with variational field $\tilde{X}^\star+\tilde{Y}_0=\tilde{X}$, as desired.
\end{proof}

As we wrote above, we consider two equivariant maps
in the same $\PSL_2(\CC)$-orbit as ``geometrically equivalent''.
A typical geometrically trivial deformation $(\boldsymbol{\tilde{f}},\boldsymbol{\rho})$
of $(\tilde{f},\rho)$
can be obtained by setting $\tilde f_t:=g_t \circ \tilde{f}$ and $\rho_t:= \mathrm{Ad}_{g_t}\rho$,
where $\boldsymbol{g}:(-\epsilon,\epsilon)\to\PSL_2(\CC)$ satisfies
$g_0=\Id$ and $\dot{g}_0\in\mathfrak{sl}_2(\CC)$.
In this case, a straightforward computation shows that its associated $(\tilde{X},\dotr)$ satisfy
\[
\tilde{X}=\dot{g}_0\Big|_{\widetilde{S}},
\quad\dotr_\gamma= \dot{g}_0-\mathrm{Ad}_{\rho(\gamma)}\dot{g}_0.
\]

\begin{defi}[$1$-coboundary associated to an equivariant map]
A {\it{$1$-coboundary}} associated to $(\tilde{f},\rho)$
is a couple $(\tilde{X},\dotr)$ such that $\tilde{X}=\dot{g}\Big|_{\widetilde{S}}$ and
$\dotr=\dot{g}-\mathrm{Ad}_{\rho}\dot{g}$ for some $\dot{g}\in\mathfrak{sl}_2(\CC)$.
The vector space of $1$-coboundaries is denoted by $\cob^1_{(\tilde{f},\rho)}$.
\end{defi}

The above discussion can be condensed into the following.

\begin{lemma}[Geometrically trivial first-order deformations]\label{lemma:geom-trivial}
Let $(\tilde{f},\rho)$ be an equivariant map.
The map
\[
\left\{\begin{array}{c}
\text{geometrically trivial}\\
\text{first-order deformations of $(\tilde{f},\rho)$}
\end{array}\right\}\lra\cob^1_{(\tilde{f},\rho)}
\]
is a bijection.
\end{lemma}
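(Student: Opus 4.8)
The statement to prove is Lemma~\ref{lemma:geom-trivial}, asserting that the assignment sending a geometrically trivial first-order deformation of $(\tilde f,\rho)$ to its associated $1$-cocycle $(\tilde X,\dotr)$ is a bijection onto $\cob^1_{(\tilde f,\rho)}$.

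\textbf{Plan of proof.}
The plan is to exhibit the two natural maps and check they are mutually inverse, reusing the explicit computation already carried out in the paragraph preceding the statement. First I would make precise what ``geometrically trivial first-order deformation'' means: it is the first-order datum (i.e., the equivalence class under agreeing to first order) of a deformation of the form $\tilde f_t := g_t\circ\tilde f$, $\rho_t := \mathrm{Ad}_{g_t}\rho$, where $\boldsymbol g:(-\epsilon,\epsilon)\to\PSL_2(\CC)$ is a smooth path with $g_0 = \Id$; such a first-order datum depends only on $\dot g_0\in\mathfrak{sl}_2(\CC)$, because by Lemma~\ref{lm:first-order} a first-order equivariant deformation is determined by its $1$-cocycle, and the computation quoted above shows the $1$-cocycle of $g_t\circ\tilde f$ is $(\dot g_0|_{\widetilde S},\ \gamma\mapsto \dot g_0 - \mathrm{Ad}_{\rho(\gamma)}\dot g_0)$, which depends on $\boldsymbol g$ only through $\dot g_0$.

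\textbf{Key steps.}
Step one: the map into $\coc^1_{(\tilde f,\rho)}$ of Lemma~\ref{lm:first-order}, restricted to geometrically trivial deformations, has image exactly $\cob^1_{(\tilde f,\rho)}$. Indeed, by the quoted computation the image of the trivial deformation attached to $\dot g_0$ is the coboundary associated to $\dot g_0$; conversely, every element of $\cob^1_{(\tilde f,\rho)}$ is by definition of this shape for some $\dot g\in\mathfrak{sl}_2(\CC)$, and taking $g_t := \exp(t\dot g)$ produces a geometrically trivial deformation realising it. So the assignment is surjective onto $\cob^1_{(\tilde f,\rho)}$. Step two: injectivity. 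Lemma~\ref{lm:first-order} already tells us the passage from a first-order equivariant deformation to its $1$-cocycle is a bijection onto $\coc^1_{(\tilde f,\rho)}$; restricting a bijection to a subset is still injective, so two geometrically trivial first-order deformations with the same $1$-cocycle coincide as first-order deformations. Combining the two steps gives the claimed bijection, and in fact identifies it, via the map $\dot g\mapsto$ (coboundary attached to $\dot g$), with the linear surjection $\mathfrak{sl}_2(\CC)\to\cob^1_{(\tilde f,\rho)}$ (whose kernel consists of $\dot g$ fixed by $\mathrm{Ad}_{\rho(\gamma)}$ for all $\gamma$, i.e.\ infinitesimal stabilisers of $\rho$).

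\textbf{Main obstacle.}
There is no substantial obstacle: everything reduces to the definitions and to the already-quoted first-order computation, together with the bijection of Lemma~\ref{lm:first-order}. The only point requiring a line of care is that ``geometrically trivial first-order deformation'' is genuinely a condition on the first-order datum and not on the full path — i.e.\ that whether a first-order deformation is geometrically trivial can be read off its $1$-cocycle — but this is immediate from the fact that the $1$-cocycle of $g_t\circ\tilde f$ depends only on $\dot g_0$ and that, conversely, $\dot g_0$ can be recovered from the cocycle as $\tilde X = \dot g_0|_{\widetilde S}$ (using that $\widetilde S$ is $2$-dimensional, so a Killing field of $\HH^3$ is determined by its restriction to the image of $\tilde f$ when $\tilde f$ has rank $2$ somewhere — or, more cheaply, simply by recording $\dot g_0$ as part of the data). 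Thus the proof is a short bookkeeping argument.
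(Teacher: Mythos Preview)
Your proposal is correct and matches the paper's approach: the paper itself gives no separate proof, simply introducing the lemma with ``The above discussion can be condensed into the following,'' so the content is exactly the computation you cite together with the bijection of Lemma~\ref{lm:first-order}. Your write-up is in fact more detailed than what the paper supplies; the only superfluous remark is the aside about recovering $\dot g_0$ from the cocycle, which (as you yourself note) is irrelevant to the bijection being claimed.
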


We can finally summarize the statement of Lemma \ref{lm:first-order} 
and Lemma \ref{lemma:geom-trivial} in this way.

\begin{cor}[Encoding first-order deformations]\label{cor:encoding-def}
Let $(\tilde{f},\rho)$ be a smooth equivariant map.
\begin{itemize}
\item 
The tangent space $T_{\tilde{f}}\AMap_\rho$ can be identified
to $\Gamma(\Theta_f)$.
\item 
The tangent space $T_{(\tilde{f},\rho)}\AMap$ can be identified
to $\coc^1_{(\tilde{f},\rho)}$.
%
%
\item 
The tangent space at $(\tilde{f},\rho)$ to the $\PSL_2(\CC)$-orbit inside $\AMap$
can be identified to $\cob^1_{(\tilde{f},\rho)}$.
\end{itemize}
Hence, the tangent space at $[\tilde{f},\rho]$ to $\Map$ can be identified
to $\coh^1_{(\tilde{f},\rho)}:=\coc^1_{(\tilde{f},\rho)}/\cob^1_{(\tilde{f},\rho)}$.
\end{cor}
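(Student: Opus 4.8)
The plan is to observe that this corollary is a repackaging of the identifications already established, once one fixes the meaning of ``tangent space'' in this infinite-dimensional setting. Following the Remark identifying $\Gamma(\Theta_{\tilde f})$ with the tangent space to $C^\infty(\widetilde S,\HH^3)$ at $\tilde f$, I would take as working definition that the tangent space to a subset $\mathcal{W}\subseteq\AMap$ at a point is the set of variational fields $(\tilde X,\dotr)$ of smooth paths in $\mathcal{W}$ through that point; this agrees with the genuine tangent space wherever $\mathcal{W}$ is a smooth submanifold (recall $\cX$ is a manifold by Lemma \ref{lemma:rep-smooth}, and $\AMap_\rho$, $\AMap$ are Fr\'echet manifolds).

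With this convention the three bullets are immediate. For the first, the lemma on isomonodromic deformations says that first-order deformations of $(\tilde f,\rho)$ inside $\AMap_\rho$ are exactly the $\rho$-invariant sections $\Gamma(\Theta_{\tilde f})^\rho$, which have already been identified with $\Gamma(\Theta_f)$. For the second, this is precisely Lemma \ref{lm:first-order}: the variational fields $(\tilde X,\dotr)$ of first-order equivariant deformations of $(\tilde f,\rho)$ are exactly the elements of $\coc^1_{(\tilde f,\rho)}$. For the third, the $\PSL_2(\CC)$-orbit through $(\tilde f,\rho)$ is the image of the orbit map $\PSL_2(\CC)\to\AMap$, $g\mapsto g\cdot(\tilde f,\rho)$, so its tangent space at $(\tilde f,\rho)$ is spanned by the variational fields of the paths $t\mapsto g_t\cdot(\tilde f,\rho)$ with $g_0=\Id$; by the computation preceding Lemma \ref{lemma:geom-trivial} these are exactly the geometrically trivial first-order deformations, hence $\cob^1_{(\tilde f,\rho)}$ by that lemma. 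Note $\cob^1_{(\tilde f,\rho)}\subseteq\coc^1_{(\tilde f,\rho)}$, as it must be since the orbit is contained in $\AMap$.

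Finally, to identify $T_{[\tilde f,\rho]}\Map$ with $\coc^1_{(\tilde f,\rho)}/\cob^1_{(\tilde f,\rho)}$, I would argue that near $(\tilde f,\rho)$ the projection $\AMap\to\Map=\PSL_2(\CC)\backslash\AMap$ behaves like that of a principal bundle: the $\PSL_2(\CC)$-action is free there — the stabilizer of $(\tilde f,\rho)$ is the centralizer of $\rho(\pi_1(S))$ in $\PSL_2(\CC)$, which is trivial once $d\tilde f$ has rank $2$ somewhere and $\rho$ is non-elementary — and proper, so the differential of the quotient map at $(\tilde f,\rho)$ is surjective with kernel the tangent space to the orbit. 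This last step is the only point requiring care beyond the bookkeeping above: one must either invoke local freeness and properness of the $\PSL_2(\CC)$-action (valid for non-elementary $\rho$, which is the case relevant to the rest of the paper) or simply adopt $\coc^1_{(\tilde f,\rho)}/\cob^1_{(\tilde f,\rho)}$ as the definition of the space of first-order deformations of $[\tilde f,\rho]$. In either reading, the displayed identification $\coh^1_{(\tilde f,\rho)}:=\coc^1_{(\tilde f,\rho)}/\cob^1_{(\tilde f,\rho)}$ follows at once from the three bullets together with the inclusion $\cob^1_{(\tilde f,\rho)}\subseteq\coc^1_{(\tilde f,\rho)}$.
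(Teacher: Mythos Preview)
Your proposal is correct and matches the paper's approach: the paper presents this corollary without proof, explicitly as a summary of the preceding lemmas (the isomonodromic deformations lemma, Lemma~\ref{lm:first-order}, and Lemma~\ref{lemma:geom-trivial}), which is exactly what you unpack. If anything you are more scrupulous than the paper about the final quotient step, flagging the freeness/properness issue for the $\PSL_2(\CC)$-action and offering the alternative of taking $\coh^1_{(\tilde f,\rho)}$ as a definition; the paper simply states the identification.
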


\subsection{Convexity}\label{ssc:convexity}

One of the main properties of the $1$-Schatten energy is that it is convex along geodesic deformations.
The rest of this section will be devoted to proving the following statement.

\begin{prop}[Convexity of the $1$-Schatten energy]\label{pr:convexity}
Let $(\tilde{f},\rho)\in\AMap$ be an equivariant map
and let $\boldsymbol{\tilde{f}}$ be a deformation of $\tilde{f}$ with fixed monodromy $\rho$
and variational field $\tilde{X}\in\Gamma(\Theta_{\tilde{f}})^\rho$.
If $\boldsymbol{\tilde{f}_{\tilde{X}}}$ is the geodesic displacement of $\tilde{f}$ along $\tilde{X}$,
then the $1$-Schatten energy of the $\rho$-equivariant map $\tilde{f}_{\tilde{X},t}$ satisfies
\begin{itemize}
\item[(i)]
the function $t\to F(\tilde{f}_{\tilde{X},t})$ is convex;
\item[(ii)]
if $\tilde{f_0}$ is an immersion,  then $\frac{d^2}{dt^2}F(\tilde{f}_{\tilde{X},t})\Big|_{t=0}>0$.
\end{itemize}
\end{prop}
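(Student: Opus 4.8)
The plan is to prove convexity pointwise: for almost every $\tilde{p}\in\widetilde{S}$, the function $t\mapsto \|d(\tilde{f}_{\tilde{X},t})_{\tilde{p}}\|_1$ is convex, and strictly convex at $t=0$ when $d\tilde{f}_{\tilde{p}}$ has rank $2$. Integrating over $S$ against the fixed area form $\da$ then yields \emph{(i)} directly, and \emph{(ii)} follows since for an immersion $d\tilde{f}_{\tilde{p}}$ has rank $2$ at every point, so the integrand has strictly positive second derivative on an open set of full measure (one must only check that differentiation under the integral sign is legitimate, which is guaranteed by the smoothness of $t\mapsto F(\tilde{f}_t)$ at immersions recalled earlier, or by dominated convergence using the Lipschitz bounds from the Appendix).

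To get the pointwise statement, I would fix $\tilde{p}$ and analyze the curve $t\mapsto\exp_{\tilde{f}(\tilde{p})}(t\tilde{X}(\tilde{p}))$ in $\HH^3$ together with its differential. The key geometric input is that $\HH^3$ has curvature $-1$: the differential of the exponential map along a geodesic is controlled by Jacobi fields, which grow convexly. Concretely, writing $g_{\tilde{p}}(t):=d(\tilde{f}_{\tilde{X},t})_{\tilde{p}}:T_{\tilde{p}}\widetilde{S}\to T_{\tilde{f}_{\tilde{X},t}(\tilde{p})}\HH^3$, one differentiates $g_{\tilde{p}}(t)$ as a vector-valued family using the Jacobi equation; because sectional curvatures are $-1\le 0$, the second covariant derivative of each Jacobi field points ``outward,'' which translates into convexity of $t\mapsto \langle g_{\tilde{p}}(t)v, g_{\tilde{p}}(t)v\rangle$ for each fixed $v$, hence of the pullback metric $\tilde{f}_{\tilde{X},t}^*h_{\HH^3}$ at $\tilde{p}$ as a quadratic form. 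The remaining, and I expect main, obstacle is passing from convexity of the quadratic form $t\mapsto I_{\tilde{p}}(t)$ to convexity of its $1$-Schatten norm $t\mapsto\|g_{\tilde{p}}(t)\|_1=\tr\sqrt{I_{\tilde{p}}(t)}$ (the trace of the square root, computed with respect to the fixed background $\tilde h_{\tilde{p}}$): the map $A\mapsto\tr\sqrt{A}$ on positive semidefinite symmetric matrices is concave, not convex, so one cannot simply compose. This is precisely why the paper sets up the Appendix on $1$-Schatten norms and families of matrices.

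Thus the heart of the argument is a linear-algebra/convexity lemma — which I would invoke from the Appendix — of the following shape: if $t\mapsto L_t$ is a smooth family of linear maps $W\to V$ from a $2$-dimensional Euclidean space to hyperbolic $3$-space along geodesic displacement, then $t\mapsto\|L_t\|_1$ is convex, with strict convexity at any $t_0$ where $L_{t_0}$ has rank $2$. One route: reduce to the case $\dim V=\dim W$ by noting $\|L\|_1$ depends only on $L^*L$, i.e. on the pullback metric $I$, and $\|L\|_1=\tr\sqrt I$; then express things in terms of the two singular values $\mu_1(t),\mu_2(t)\ge 0$, with $\mu_1^2+\mu_2^2$ and $\mu_1^2\mu_2^2$ governed by the convex quantities coming from Jacobi fields, and verify by a direct computation (using $\da$-trace and $\da$-determinant as the two elementary symmetric functions of $\mu_i^2$) that $\mu_1+\mu_2$ is convex in $t$; strict convexity at rank-$2$ points comes from the fact that the Jacobi-field estimate is strict when the relevant geodesic-variation vector fields are nonzero, which holds precisely when $d\tilde{f}_{\tilde{p}}$ is injective. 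I would carry out the steps in this order: (1) pointwise Jacobi-field computation giving convexity of $t\mapsto I_{\tilde{p}}(t)$ as a quadratic form, with the strict inequality at rank-$2$ points; (2) the Appendix convexity lemma upgrading this to convexity of $\|d\tilde{f}_{\tilde{X},t}\|_1$ pointwise, strict at rank-$2$ points; (3) integration over $S$ with justification of differentiation under the integral, yielding \emph{(i)} and \emph{(ii)}.
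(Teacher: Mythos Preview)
Your overall architecture matches the paper's: reduce to pointwise convexity of $t\mapsto\|d(\tilde f_{\tilde X,t})_{\tilde p}\|_1$ via Jacobi fields, invoke an Appendix lemma, then integrate. The gap is in the mechanism you propose for the Appendix step.

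You set up Step~(1) as ``convexity of $t\mapsto I_{\tilde p}(t)$ as a quadratic form'' and then want Step~(2) to upgrade this to convexity of $\tr\sqrt{I_{\tilde p}(t)}$. As you yourself note, $A\mapsto\tr\sqrt A$ is concave, so convexity of $I(t)$ alone cannot suffice; and indeed it does not. For a scalar example, take $f(t)=1+t-\tfrac14 t^2$: then $(f^2)''(0)=1>0$ but $f''(0)<0$. So a diagonal $I(t)=\mathrm{diag}(f(t)^2,g(t)^2)$ can be convex while $\mu_1+\mu_2=f+g$ is not. Your proposed route through the symmetric functions $\mu_1^2+\mu_2^2$ and $\mu_1^2\mu_2^2$ runs into the same wall: knowing these are convex does not force $\sqrt{\tr I+2\sqrt{\det I}}$ to be convex.

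What the paper actually uses is stronger than convexity of $I(t)$: after parallel-transporting to a fixed fiber, the family $L_t:=\tau_t\circ d\tilde f_{\tilde X,t}$ satisfies the Jacobi ODE $\ddot L_t=\Xi\circ L_t$ with $\Xi=-R(\,\cdot\,,\tilde X)\tilde X\ge 0$. The Appendix lemma (Proposition~\ref{lm:convexity}) takes \emph{this ODE} as its hypothesis and splits
\[
\frac{d^2}{dt^2}\|L_t\|_1=(d^2\|\cdot\|_1)_{L_t}(\dot L_t,\dot L_t)+(d\|\cdot\|_1)_{L_t}(\ddot L_t),
\]
handling the first term by convexity of the $1$-Schatten norm \emph{as a norm on $\Hom(V,W)$} (not convexity of $I$), and the second by a monotonicity computation showing $(d\|\cdot\|_1)_L(AL)\ge 0$ whenever $A\ge 0$. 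Both ingredients are made rigorous via an $\epsilon$-regularization $q_\epsilon(L)=\tr\sqrt{\epsilon^2\Id+L^*L}$ to avoid the non-smoothness at rank-deficient $L$. Your Step~(1) should therefore be replaced by ``derive the ODE $\ddot L=\Xi L$ with $\Xi\ge 0$'', and your Step~(2) should invoke convexity of $\|\cdot\|_1$ on $\Hom(V,W)$ plus the monotonicity $(d\|\cdot\|_1)_L(AL)\ge 0$.

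One further correction on strictness: rank~$2$ at $\tilde p$ is not enough for strict convexity of the pointwise density; you also need $\tilde X(\tilde p)\neq 0$, since otherwise $\Xi=0$ and the geodesic is constant. The paper's strict inequality in~(ii) uses that, for an immersion and a nonzero $\tilde X$, both conditions hold on an open set.
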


The proof of Proposition \ref{pr:convexity} is based on some technical lemmas
on a smooth perturbation of the Schatten norm,
contained in Section \ref{ssc:linear-convexity}.
Essentially the same proof shows that the result still holds for
equivariant maps inside a negatively curved complete manifold.

\begin{proof}[Proof of Proposition \ref{pr:convexity}]
Let us fix $\tilde{p}\in \widetilde{S}$.
We will prove that the function $t\mapsto \|d(\tilde{f}_{\tilde{X},t})_{\tilde{p}}\|_1$ is convex; and 
moreover that it is strictly convex at $t=0$, provided that $d\tilde{f}_{\tilde{X},0}$ has rank $2$ at $\tilde{p}$ and $\tilde{X}(\tilde{p})\neq 0$.

Consider the parallel transport $\tilde{\tau}_t: \Theta_{\tilde{f}_{\tilde{X},t}}\rar \Theta_{\tilde{f}_{\tilde{X},0}}$
along geodesics in $\HH^3$.
We can define a smooth family $\boldsymbol{\tilde{s}}$
of sections of $\Hom(T\widetilde{S},\Theta_{\tilde{f}})$ by setting
$\tilde{s}_t:=\tilde{\tau}_t\circ d\tilde{f}_{X,t}:T\widetilde{S}\rar \Theta_{\tilde{f}}$.
Clearly we have that $\|d\tilde{f}_{X,t}\|_1=\|\tilde{s}_t\|_1$.
So we need to prove that the function $t\mapsto \|\tilde{s}_t\|_1$ is convex.\\

{\bf{Claim.}}~{\it{
The family $\boldsymbol{\tilde{s}}:\RR\rar \Hom(T\widetilde{S},\Theta_{\tilde{f}})$
is a solution of the following Cauchy problem
\[
\begin{cases}
\ddot{\tilde{s}} = \Xi\circ\tilde{s}\\
\tilde{s}_0=d\tilde{f}_{\tilde{X},0}\\
\dot{\tilde{s}}_0=\nabla^{\HH^3} \tilde{X}
\end{cases}
\]
where $\nabla^{\HH^3}$ is the connection on $\HH^3$ and $\Xi: 
\Theta_{\tilde{f}_{X,0}}\rar\Theta_{\tilde{f}_{X,0}}$ is the
self-adjoint operator defined by
\[
    \Xi(\bullet):=- R(\bullet, \tilde{X})\tilde{X}
\]
where $R$ is the Riemann curvature tensor of $\HH^3$.
}}\\

Assuming the above claim,
the operator $\Xi$ is nonnegative because
the curvature of $\HH^3$ is, and Proposition \ref{lm:convexity} shows that the function $t\mapsto \|\tilde{s}_t\|_1$ is convex, thus proving (i).\\

In order to verify the above claim, 
fix a point $\tilde{p}\in\widetilde{S}$ and let 
$\tilde{\alpha}$ be the geodesic in $\HH^3$
defined by $\tilde{\alpha}(t):=d\tilde{f}_t(\tilde{p})=
\exp_{\tilde{f}_{\tilde{X},0}(\tilde{p})}t\tilde{X}(\tilde{p})$.

For every fixed $\tilde{v}\in T_{\tilde{p}}\widetilde{S}$.
define a vector field $\tilde{J}$ along $\alpha$ as
$\tilde{J}(t):=d\tilde{f}_{\tilde{X},t}(\tilde{v})$.
Then $\tilde{J}$ is a Jacobi field with
initial conditions $\tilde{J}(0)=d\tilde{f}_{\tilde{X},0}(\tilde{v})$ and $\dot{\tilde{J}}(0)=(\nabla^{\HH^3}_{\tilde{v}}\tilde{X})_{\tilde{p}}$.

Now, let $\{e_i\}$ be a parallel orthonormal frame along $\tilde{\alpha}$. Putting $\tilde{J}(t)=\sum_i c^i(t) e_i(t)$ we have that
\[
   \ddot c^i(t)= -\langle R(\tilde{J}(t),\dot{\tilde{\alpha}}(t))\dot{\tilde{\alpha}}(t), e_i(t)\rangle~.
\]
Now considering that $\tilde{s}_t(\tilde{v})=\sum_i c^i e_i(0)$,  we deduce that 
\[
   \ddot{\tilde{s}}_t(\tilde{v})=-\tilde{\tau}_t
   \left( R(\tilde{\tau}_t^{-1}(\tilde{v}),\dot{\tilde{\alpha}}(t))\dot{\tilde{\alpha}}(t)\right)=-R(\tilde{v},\tilde{X})\tilde{X}=\Xi(\tilde{s}_t(\tilde{v}))~,
\]
where the second equality holds because $R$ and $\dot{\tilde{\alpha}}$ are parallel along the geodesic displacement and $\dot{\tilde{\alpha}}(0)=\tilde{X}$.
Since $\tilde{p}$ and $\tilde{v}$ were arbitrary, the claim is proven.\\

Finally, in order to prove (ii),
notice that 
negativity of the curvature of $\HH^3$ has the following consequence:
at every point $\tilde{p}\in\widetilde{S}$ where $\tilde{X}(\tilde{p})\neq 0$,
the quadratic form 
$T_{\tilde{p}}\widetilde{S}\ni \tilde{v}\mapsto \langle\Xi(\tilde{v}),\tilde{v}\rangle=-\langle R(\tilde{v},\tilde{X}(\tilde{p}))\tilde{X}(\tilde{p}),\tilde{v}\rangle$ is semi-positive definite.
%
%
%
Moreover, if $d\tilde{f}_{\tilde{X},0}$ has rank $2$ at $\tilde{p}$ and $\tilde{X}(\tilde{p})\neq 0$, then $\Xi\circ \tilde{s}_0\neq 0$. Hence, by Proposition~\ref{lm:convexity}
we deduce that
\[
   \frac{d^2\|d\tilde{f}_t(\tilde{p})\|_1}{dt^2}\Big|_{t=0}>0
\]
and the proof is complete.
\end{proof}

Let us draw the first consequences of the convexity property proven above.

\begin{cor}[$\rho$-equivariant critical immersions as minima]\label{cor:uniqueness-minima}
If $\tilde{f}$ is a smooth critical immersion in $\AMap_\rho$, then 
$\tilde{f}$ the unique critical point and the absolute minimum of $F$
among $\rho$-equivariant maps isotopic to $\tilde{f}$.
\end{cor}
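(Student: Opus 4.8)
The plan is to derive Corollary~\ref{cor:uniqueness-minima} directly from the convexity result of Proposition~\ref{pr:convexity} together with the observation about geodesic displacements. First I would recall that, by the Lemma on uniqueness of geodesic displacements between given maps, if $\tilde{f}$ and $\tilde{f}_1$ are two $\rho$-equivariant maps isotopic to one another, there is a unique first-order deformation $\tilde{X}$ of $\tilde{f}$ with $\boldsymbol{\tilde{f}}_{\tilde{X}}(1,\bullet)=\tilde{f}_1$; moreover $\tilde{X}$ is automatically $\rho$-invariant (it is the difference, read through $\exp^{-1}$, of two $\rho$-equivariant maps), so $\tilde{X}\in\Gamma(\Theta_{\tilde{f}})^\rho$ and the whole path $\tilde{f}_{\tilde{X},t}$ stays inside $\AMap_\rho$. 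Thus the single real-variable function $g(t):=F(\tilde{f}_{\tilde{X},t})$ is defined on $[0,1]$, joins $F(\tilde{f})$ to $F(\tilde{f}_1)$, and is convex by Proposition~\ref{pr:convexity}(i).

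Next I would run the standard convexity argument. Since $\tilde{f}$ is by hypothesis a critical point of $F$ in $\AMap_\rho$, and $\tilde{f}_{\tilde{X},t}$ is an isomonodromic deformation of $\tilde{f}$, we have $g'(0)=0$. A convex function on $[0,1]$ with vanishing derivative at the left endpoint satisfies $g(t)\ge g(0)$ for all $t\in[0,1]$; in particular $F(\tilde{f}_1)=g(1)\ge g(0)=F(\tilde{f})$. As $\tilde{f}_1$ was an arbitrary $\rho$-equivariant map isotopic to $\tilde{f}$, this shows $\tilde{f}$ realizes the absolute minimum of $F$ among such maps.

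For uniqueness I would use the strict convexity in Proposition~\ref{pr:convexity}(ii). Suppose $\tilde{f}_1$ is another critical point of $F$ among $\rho$-equivariant maps isotopic to $\tilde{f}$, with $\tilde{f}_1\ne\tilde{f}$. Then the connecting variational field $\tilde{X}$ is not identically zero, so there is a point $\tilde{p}$ with $\tilde{X}(\tilde{p})\ne 0$. Since $\tilde{f}$ is an immersion, $d\tilde{f}_{\tilde{X},0}$ has rank $2$ everywhere; hence by part (ii) of the proposition the contribution of $\tilde{p}$ to $g''(0)$ is strictly positive, and since all other pointwise contributions are $\ge 0$ (part (i)), we get $g''(0)>0$. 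Combined with $g'(0)=0$ this forces $g$ to be strictly increasing on a right-neighbourhood of $0$; but applying the same reasoning at the endpoint $t=1$ (using that $\tilde{f}_1$ is also critical, so $g'(1)=0$) gives $g$ strictly decreasing on a left-neighbourhood of $1$ — together with convexity of $g$ on $[0,1]$ this is a contradiction (a convex function cannot have $g'(0)=g'(1)=0$ unless it is constant, and constancy is ruled out by $g''(0)>0$). Hence no such $\tilde{f}_1$ exists, and $\tilde{f}$ is the unique critical point.

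The only mildly delicate point — and the one I would be most careful about — is checking that the interpolating path genuinely lies in $\AMap_\rho$ and that the function $g$ is twice differentiable so that Proposition~\ref{pr:convexity} applies verbatim: one needs $\tilde{X}$ to be smooth and $\rho$-invariant (granted by the geodesic-displacement lemma and equivariance of the endpoints) and one should note that $g$ is smooth near any $t_0$ where $\tilde{f}_{\tilde{X},t_0}$ is an immersion, which holds at $t_0=0$ and $t_0=1$ since both endpoints are immersions; convexity then propagates the inequality across the whole interval regardless of the regularity at intermediate times. Everything else is the elementary calculus of convex functions of one variable.
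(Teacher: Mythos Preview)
Your argument is correct and follows the same route as the paper's: connect $\tilde{f}$ to any competitor $\tilde{f}_1$ by the geodesic displacement and apply Proposition~\ref{pr:convexity}. The paper's version is marginally more direct --- it handles both conclusions at once by noting that $g'(0)=0$ together with $g''(0)>0$ and convexity force $g'(1)>0$ (so $\tilde{f}_1$ is not critical) and $g(1)>g(0)$ (so $\tilde{f}$ is a strict minimum), rather than assuming $\tilde{f}_1$ critical and using $g'(1)=0$ to reach a contradiction.
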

\begin{proof}
Let $\tilde{f}_1$ be
a $\rho$-equivariant map $\tilde{f}_1$
different from $\tilde{f}_0=\tilde{f}$.
There is an invariant vector field $\tilde{X}\neq 0$ along $\tilde{f}_0$ such that
the geodesic displacement $\boldsymbol{\tilde{f}_{\tilde{X}}}$  connects $\tilde{f}_0$ to $\tilde{f}_1$.
By Proposition~\ref{pr:convexity}, the function $t\mapsto F(\tilde{f}_{\tilde{X},t})$ is strictly convex.
Such a function though has vanishing first derivative at $t=0$, because $\tilde{f}_0$ is a critical
point for $F$. Hence, its derivative at $t=1$ is strictly positive.
Hence, we deduce that $\tilde{f}_1$ is not a critical point for $F$ and that
$F(\tilde{f}_0)<F(\tilde{f}_1)$.
\end{proof}

Another consequence of the convexity of the $1$-Schatten energy is
the following.



\begin{lemma}[Monodromy of critical immersions]\label{lemma:minimizing-group}
Let $\rho:\pi_1(S)\rar\PSL_2(\CC)$ be a representation.
\begin{itemize}
\item[(i)]
Suppose that there exists a smooth $\rho$-equivariant map $\tilde{f}$
which is critical in $\AMap_\rho$. Then $\rho$ is 
reductive, i.e.~the closure of the image of $\rho$ is a reductive subgroup of $\PSL_2(\CC)$.
\item[(ii)]
Suppose that there exists a smooth $\rho$-equivariant map $\tilde{f}$
which is critical in $\AMap_\rho$ and such that $d\tilde{f}$ 
has rank $2$ at some point. Then $\rho$ is non-elementary.
\end{itemize}
\end{lemma}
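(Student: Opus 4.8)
The plan is to exploit the convexity of the $1$-Schatten energy (Proposition \ref{pr:convexity}) together with the standard dichotomy for subgroups of $\PSL_2(\CC)$. Recall that a non-reductive subgroup of $\PSL_2(\CC)$ fixes a unique point $\xi$ in $\partial\HH^3=\CC P^1$ but no point of $\HH^3$, while an elementary subgroup either fixes a point of $\HH^3$, or fixes a point (or a pair of points) of $\partial\HH^3$, or preserves a geodesic. In each such degenerate case I will produce a one-parameter family of isometries $(g_t)_{t\in\RR}$ of $\HH^3$ that commutes with $\rho$ up to conjugation in a controlled way, compose the critical map $\tilde f$ with $g_t$, and derive a contradiction with the strict convexity at the critical point.

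For part (i): suppose $\rho$ is not reductive, so the closure $\bar{G}$ of $\mathrm{im}(\rho)$ fixes a unique boundary point $\xi\in\partial\HH^3$ but no interior point. Pick a parabolic or loxodromic one-parameter subgroup $(g_t)$ of $\PSL_2(\CC)$ fixing $\xi$; then $g_t$ normalizes $\bar G$ is \emph{not} what we need --- rather, we use that conjugation by $g_t$ moves horospheres based at $\xi$ and hence, for a $\rho$-equivariant $\tilde f$, the maps $g_t\circ\tilde f$ are $\rho$-equivariant only if $g_t$ centralizes $\rho$. The cleaner route: since $\bar G$ fixes $\xi$, choose the one-parameter \emph{parabolic} group $(g_t)$ fixing $\xi$; then $\mathrm{Ad}_{g_t}\circ\rho$ is a genuine deformation of $\rho$ (a path in $\Hom(\pi_1(S),\PSL_2(\CC))$), and $g_t\circ\tilde f$ is $(\mathrm{Ad}_{g_t}\rho)$-equivariant. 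Applying Lemma \ref{lemma:C-lipschitz}, or more precisely combining it with the nearest-point retraction onto a horosphere based at $\xi$, one shows $F(r_s\circ\tilde f)\to 0$ as $s\to\infty$ where $r_s$ is the retraction onto a horoball shrinking toward $\xi$; since all these retractions are $\rho$-equivariant (because $\bar G$ fixes $\xi$) and $1$-Lipschitz, one gets $\inf_{\AMap_\rho}F=0$, which is not attained at the critical immersion $\tilde f$ (where $F(\tilde f)>0$), contradicting Corollary \ref{cor:uniqueness-minima} which says a critical point is the absolute minimum. Hence $\rho$ is reductive.

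For part (ii): assume $\rho$ is reductive (by (i)) but elementary; since it is reductive the remaining cases are that $\rho$ fixes a point $x_0\in\HH^3$, or preserves a geodesic $\gamma$. In the first case, $\rho$ takes values in a conjugate of $\PSO(3)$; but then every $\rho$-equivariant map is, via the retraction to $\{x_0\}$... no --- here instead one observes that the orbit map gives a $\rho$-equivariant constant-type behavior, and more directly: the subgroup $\SO(3)$ has a one-parameter subgroup $(g_t)$ of the \emph{centralizer}? It does not. The honest argument: when $\rho$ fixes $x_0$, the constant map $\tilde f\equiv x_0$ is $\rho$-equivariant with $F=0$, so again the infimum is $0$ and is not attained at an immersion (where $F>0$, and strictly so because $d\tilde f$ has rank $2$ somewhere), contradicting that the critical immersion is the absolute minimum. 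When $\rho$ preserves a geodesic $\gamma$, retracting onto $\gamma$ gives a $1$-Lipschitz $\rho$-equivariant map whose differential has rank $\le 1$ everywhere, so $F(r\circ\tilde f)<F(\tilde f)$ strictly (the $1$-Schatten norm drops wherever $d\tilde f$ had rank $2$, and by hypothesis that happens on an open set), again contradicting minimality. Therefore $\rho$ is non-elementary.

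\textbf{Main obstacle.} The delicate point is part (i): one must be careful that the retractions onto shrinking horoballs based at the fixed point $\xi$ are genuinely $\rho$-equivariant --- this uses precisely that $\mathrm{im}(\rho)$ stabilizes $\xi$ --- and that the $1$-Schatten energy of the composed maps actually tends to $0$ rather than merely decreasing; this requires a quantitative estimate on how the Lipschitz constant of the horoball retraction decays, for which one invokes the explicit geometry of horoballs in $\HH^3$ together with Lemma \ref{lemma:C-lipschitz}. Once $\inf F=0$ is established, the contradiction with $F(\tilde f)>0$ at the critical immersion (via Corollary \ref{cor:uniqueness-minima}) is immediate.
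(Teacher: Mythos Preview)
Your argument for part (ii) is essentially correct and matches the paper's: retract onto the fixed point or the fixed geodesic to produce a $\rho$-equivariant map with strictly smaller $F$, contradicting criticality. (One caveat: you appeal to Corollary~\ref{cor:uniqueness-minima}, but that corollary requires $\tilde f$ to be an immersion, whereas the hypothesis in (ii) is only that $d\tilde f$ has rank $2$ at \emph{some} point. The fix is direct: once you have a $\rho$-equivariant $\tilde g$ with $F(\tilde g)<F(\tilde f)$, the convexity of $t\mapsto F(\tilde f_{\tilde X,t})$ along the geodesic displacement from $\tilde f$ to $\tilde g$ forces a strictly negative one-sided derivative at $t=0$, so $\tilde f$ cannot be critical.)

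Part (i), however, has two genuine gaps. First, the retraction onto a horoball based at the fixed boundary point $\xi$ is \emph{not} $\rho$-equivariant in general: a non-reductive subgroup fixing exactly $\xi$ can contain loxodromic elements (e.g.\ the group generated by $z\mapsto 2z$ and $z\mapsto z+1$ in the upper half-space model with $\xi=\infty$), and loxodromics fixing $\xi$ do not preserve individual horoballs. So ``$\bar G$ fixes $\xi$'' is not enough to make the horoball retraction commute with $\rho$. Second, your conclusion ``$\inf_{\AMap_\rho}F=0$'' is not justified and is likely false when $\rho$ contains such loxodromics: flowing toward $\xi$ kills the horospherical component of $d\tilde f$ but leaves the radial (Busemann) component intact, so $F$ typically decreases to a positive limit, not to $0$. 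And again, Corollary~\ref{cor:uniqueness-minima} does not apply because part (i) is about critical \emph{maps}, not immersions.

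The paper's route avoids all of this by using the \emph{geodesic displacement} toward $\xi$ (i.e.\ $G_s(p)=\exp_p(s\cdot \tilde X(p))$ with $\tilde X(p)$ the unit tangent toward $\xi$) instead of horoball retractions. This flow genuinely commutes with the full stabilizer of $\xi$---in the upper half-space model $G_s(z,y)=(z,e^s y)$ commutes with every $(z,y)\mapsto(az+b,|a|y)$---so $G_s\circ\tilde f$ is $\rho$-equivariant for all $s$. Since $G_s$ is $1$-Lipschitz for $s\ge 0$ and strictly contracts horospherical directions, and since a $\rho$-equivariant $\tilde f$ cannot have image contained in a single geodesic limiting to $\xi$ (else $\rho$ would fix the second endpoint of that geodesic), one gets that $s\mapsto F(G_s\circ\tilde f)$ is convex and strictly decreasing; hence $\tilde f$ is not critical. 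No infimum computation and no appeal to Corollary~\ref{cor:uniqueness-minima} are needed.
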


\begin{proof}
We analyze case by case the possible types of monodromy representations.

If $\rho$ fixes a point $x$ in $\HH^3$, then it is elementary
and reductive and the unique critical point is the constant
map with value $x$. Hence, both (i) and (ii) hold.

If $\rho$ fixes exactly one point $x_\infty\in \partial\HH^3$, then 
it is elementary but not reductive and a $\rho$-equivariant map $\tilde{f}$ cannot have
image contained in a geodesic that limits to $x_\infty$.
As a consequence, the convexity
properties of $F$ imply that $F$ is 
strictly descreasing along the geodesic displacement from $\tilde{f}$
towards $x_\infty$. Thus, there is no $\rho$-equivariant $\tilde{f}$ which is critical for $F$,
and so (i) and (ii) hold.

If $\rho$ fixes exactly two points in $\partial\HH^3$, then it fixes a geodesic $L$ in $\HH^3$.
Hence, $\rho$ is elementary and reductive and
the closest-point retraction $\Pi$ from $\HH^3$ to $L$ is $1$-Lipschitz,
smooth and $\rho$-equivariant. As a consequence, minimizers must take values in $L$
and so both (i) and (ii) hold.

If $\rho$ does not fix any point in $\HH^3$ or in $\partial\HH^3$,
then $\rho$ is not elementary. In this case, the image of $\rho$ cannot have
a finite-index subgroup that fixes a point. Hence, $\rho$ is reductive.
\end{proof}


\section{Minimizing immersions}\label{sec:euler-lagrange}

The aim of this section is to study the space of smooth minimizing immersions
of a given hyperbolic surface $(S,h)$ into germs of hyperbolic manifolds.
This is equivalent to studying the space $\MImm$ of equivalence classes $[\tilde{f},\rho]$ of minimizing smooth equivariant immersions.
The first step is to describe an element of $\MImm$ as a pair $(b,a)$ of endomorphisms of $TS$
that satisfy some relatively elementary properties. The second step is to notice that
the package consisting of all such properties can be expressed in a particularly simple way 
in terms of the $\CC$-linear operator $\phi=b-iJba$ on the complexified tangent bundle $T_{\CC}S$. The last step is to describe the local structure of the space $\Data$ of all such operators $\phi$, and show that $\Data$ is a complex manifold of dimension $-3\chi(S)$.

\subsection{Euler-Lagrange equations} \label{ssc:equations}

In this section, we fix an equivariant immersion $(\tilde{f},\rho)\in\AImm$, consisting
of a representation $\rho:\pi_{1}(S)\rightarrow \PSL_2(\CC)$ together with a smooth $\rho$-equivariant immersion $\tilde{f}$ of $\widetilde{S}$ into $\HH^3$. 
We further consider a smooth isomonodromic deformation $(\boldsymbol{\tilde{f}})$
of $\tilde{f}$ in $\AImm_\rho$
and we determine here the first-order variation of $F(f_t)$ at $t=0$.

As in the previous section, denote by $\tilde{X}=\frac{d}{dt}\tilde{f}_t\Big|_{t=0}$ 
the variational field of $\tilde{f}$, which descends to a section $X\in\Gamma(\Theta_f)$.
Since $\tilde{f}$ is an immersion, the following is immediate.


\begin{lemma}[Decomposition of the variational field]
The variational field  $X$ can be decomposed as $X=X^T+\nu N$, where 
$\nu:S\rar\RR$ is a function,
$X^T$ is tangent to $S$ and $N\in\Gamma(\Theta_f)$ 
is the positively-oriented unit normal vector.
Moreover, the almost-complex structure $J^I$ on $S$ induced by $I$
coincides with the operator $N\times\bullet$ on $S$.
\end{lemma}

After the notation is set as above, we can state a formula for the first variation of $F$.

\begin{prop}[First-order variation of $F$ at an equivariant immersion]\label{pr:extr}
The first-order variation of $F$ along the 
isomonodromic family $(\bm{\tilde{f}})$ with
variational field $X$ satisfies
\[
\frac{d}{dt}F(\tilde{f}_t)\Big|_{t=0}=\int_S (\nu\cdot\tr(ba) + I(bW,J^I X^T))\da~,
\]
where  $W:=\det(b)^{-1}b^{-1}*d^\nabla b$ and $*:\Lambda^2(T^*S)\otimes TS\to TS$ is the Hodge $*$-operator associated with $h$.
\end{prop}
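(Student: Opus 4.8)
The plan is to compute $\frac{d}{dt}F(\tilde f_t)\big|_{t=0}$ by differentiating the integrand $\|df_t\|_1 = \tr(b_t)$ under the integral sign, reduce the computation to a pointwise linear-algebra problem for the first variation of $b$, and then integrate by parts to isolate the coefficients of $\nu$ and of $X^T$.

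\textbf{Step 1: reduce to the variation of $b$.} Since $\tilde f_0$ is an immersion, the family $t\mapsto F(\tilde f_t)$ is smooth near $t=0$ and we may differentiate under the integral; moreover $\|df_t\|_1 = \tr(b_t)$ by the remark relating the $1$-energy density to the $b$-operator. Because $b_t$ is $h$-self-adjoint and positive (for $t$ small), $\frac{d}{dt}\tr(b_t) = \tr\big(\frac{d}{dt}b_t\big)$ directly, but it is more convenient to work with $I_t = h(b_t\cdot, b_t\cdot)$: writing $\dot I$ for $\frac{d}{dt}I_t\big|_{t=0}$, one has the standard identity $\frac{d}{dt}\tr(b_t)\big|_{t=0} = \frac12\langle \dot I, b^{-1}\rangle$ where the pairing is the $h$-trace pairing on symmetric $2$-tensors (equivalently $\tr(\dot b) = \frac12\tr(b^{-1}\dot I)$, obtained by differentiating $b^2$-type relations). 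So the first task is to express $\dot I$ in terms of $X = X^T + \nu N$.

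\textbf{Step 2: first variation of the first fundamental form.} This is the classical first-variation-of-area-type computation for $\tilde I_t = \tilde f_t^* h_{\HH^3}$ along a normal-plus-tangential deformation. Pulling back to $\widetilde S$ and using the connection $\nabla^{\HH^3}$, one gets $\dot{\tilde I}(\tilde v,\tilde w) = \langle \nabla^{\HH^3}_{\tilde v}\tilde X, d\tilde f(\tilde w)\rangle + \langle d\tilde f(\tilde v), \nabla^{\HH^3}_{\tilde w}\tilde X\rangle$, i.e.\ $\dot I = 2\,\mathrm{Sym}(\text{tangential part of }\nabla^{\HH^3}\tilde X)$. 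Splitting $X = X^T + \nu N$ and using the definitions: the normal part $\nu N$ contributes via the shape operator $a$ (since $\nabla^{\HH^3}_w N = d\tilde f(a w)$), giving a term $-2\nu\, I(a\cdot,\cdot)$ up to sign conventions; the tangential part $X^T$ contributes $\mathcal L_{X^T} I$, a Lie-derivative term. Plugging into $\tr(\dot b) = \frac12\tr(b^{-1}\dot I)$ yields two contributions, one proportional to $\nu$ and one to $X^T$.

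\textbf{Step 3: integrate by parts and use Codazzi.} The $\nu$-term, after taking the $h$-trace, produces $\nu\cdot(\text{something involving }a\text{ and }b)$; tracking the index-raising with $b$ versus $h$ and $I$ carefully should collapse it to $\nu\,\tr(ba)$, recalling $\tr$ here is the $h$-trace and $ba$ is the relevant composition. The $X^T$-term is the integral over $S$ of a total-derivative-type expression coming from $\mathcal L_{X^T}I$; integrating by parts on the closed surface $S$ moves the derivative onto $b$ (and onto the area form, but $\da$ is the $h$-area form, not the $I$-area form, which is exactly why a genuine $d^\nabla b$ term survives rather than everything cancelling). The Codazzi equation $d^\nabla b = $ (the relevant torsion/curvature expression) and the Hodge $*$ associated to $h$ combine the leftover derivative terms into $W := \det(b)^{-1}b^{-1}*d^\nabla b$, producing the claimed $I(bW, J^I X^T)$. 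The identification $J^I = N\times\bullet$ from the decomposition lemma is what lets us write the tangential term with $J^I$ rather than an abstract rotation.

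\textbf{Main obstacle.} The delicate point is Step 3: the bookkeeping of the two distinct metrics $h$ and $I$ (hence two distinct Hodge stars, area forms, and index-raisings) and the two connections $\nabla = \nabla^h$ and $\nabla^I$. The term $W$ is built from $d^\nabla b$ with $\nabla$ the \emph{background} Levi-Civita connection and $*$ the \emph{background} Hodge operator, so the integration by parts must be organized so that all derivatives land on $b$ measured against $h$; the difference between $\nabla^I$ and $\nabla$ is itself an algebraic expression in $b$ and $\nabla b$, and one must check that these correction terms either vanish by self-adjointness/symmetry or get absorbed into $W$. I expect this to require a careful local computation in an $h$-orthonormal frame, using $d^\nabla b = 0$ is \emph{not} assumed here (that is the Euler–Lagrange consequence, derived afterward), so $W$ genuinely appears. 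Once the formula is in hand, setting the variation to zero for all $(\nu, X^T)$ will give $\tr(ba) = 0$ and $bW = 0$ (hence $d^\nabla b = 0$ and $\tr(ba)=0$), which is presumably how Theorem A's equations are extracted in the subsequent subsection.
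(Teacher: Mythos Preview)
Your outline is correct and follows the same overall strategy as the paper: differentiate $\tr(b_t)$ under the integral, compute the first variation of $I$ along $X=X^T+\nu N$, and separate the tangential and normal contributions. Two points where the paper is more efficient than your plan are worth noting.

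First, rather than computing $\tr(\dot b)=\tfrac12\tr(b^{-1}\dot I^\sharp)$ and then disentangling the tangential Lie-derivative term, the paper observes that both $b^{-1}\dot b$ and the self-adjoint derivative $A^X=\nabla^I X^T+\nu a$ have the same $I$-self-adjoint part (since both produce $\dot I$), hence $b^{-1}\dot b=A^X+\eta J^I$ for some function $\eta$; since $bJ^I=Jb$ is $h$-traceless, this immediately gives $\tr(\dot b)=\tr(bA^X)=\tr(b\nabla^I X^T)+\nu\,\tr(ba)$, with no sign ambiguity and no separate treatment of $\mathcal L_{X^T}I$.

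Second, what you flag as the ``main obstacle''---comparing $\nabla^I$ and $\nabla=\nabla^h$ and tracking the resulting $\nabla b$ terms---is exactly the content of a lemma the paper quotes from \cite{cyclic2}: for any vector field $w$, $\nabla^I_\bullet w=b^{-1}\nabla_\bullet(bw)+I(\bullet,W)J^I w$, with $W=\det(b)^{-1}b^{-1}*d^\nabla b$. Applying this with $w=X^T$ gives $\tr(b\nabla^I X^T)=\Div_h(bX^T)+I(bW,J^I X^T)$ in one line, and the divergence integrates to zero. Your proposed local frame computation would rederive this identity, so your approach would work, but recognizing this formula saves the bookkeeping you anticipated.
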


\begin{proof}
Clearly,
\[
\frac{dF(\tilde{f}_t)}{dt}\Big|_{t=0}=\int_S\tr(\dot b)\da
\]
so we need to compute $\dot{b}$.
In order to do so, we compute the first-order variation of the metric $I_t$ induced on $S$ by $\tilde{f}_t$ at $t=0$ in two different way.
On one hand,
differentiating the identity $I_t=h(b_t,b_t)$ at $t=0$, we get
\[
   \dot I=I(b^{-1}\dot b\bullet, \bullet)+I(\bullet, b^{-1}\dot b\bullet)~.
\]
On the other hand, the self-adjoint derivative $\Ae^{\tilde{X}}_{\tilde{f}}$ of $\tilde{X}$ (see Defintion \ref{def:AS}) satisfies $\Ae^{\tilde{X}}_{\tilde{f}}=\nabla^{\tilde{I}} \tilde{X}^T+\tilde{\nu} \tilde{a}$, and it
thus descends to an operator on $TS$, which we denote by $\Ae^{X}$.
It follows that $\dot{I}$ can be also written as
\[
  \dot I=I(A^X\bullet, \bullet)+I(\bullet, A^X\bullet)~.
\]
Comparing those two equations, we find that $b^{-1}\dot b$ and $A^X$ have the same self-adjoint component, and therefore
\[
  b^{-1}\dot b=A^X+g J^I=\nabla^I X^T+\nu a+\eta J^I~,
\]
where $\eta:S\rar\RR$ is a function.
Since $bJ^I=Jb$ is traceless, it follows that
\[
  \tr(\dot b)=\tr (bA^X)=\tr(b\nabla^I X^T)+\nu\tr(ba)~.
\]
By Lemma 3.7 of \cite{cyclic2}, given any vector field $w$ on $S$,
we have 
$\nabla^I_\bullet w=b^{-1}\nabla_\bullet(bw)+ I(\bullet, W)J^I w$.
Taking $w=X^T$, we obtain
$$ \tr(b\nabla^IX^T)=\Div_h(bX^T) + I(bW, J^IX^T)~$$
and so
\[
\tr(\dot{b})=\Div_h(bX^T) + I(bW, J^IX^T)+\nu\,\tr(ba)
\]
Since the integral of $\Div_h(bX^T)\da$ over $S$ vanishes, the conclusion follows.
\end{proof}

Summarizing, critical points of $F$ corresponds to pairs $(b,a)$ such that $b$ is
an $h$-self-adjoint positive Codazzi operator and $\tr (ba)=0$. If we take in account also the Gauss-Codazzi equation of the immersions, we obtain the following statement, realizing the first step announced at the beginning of the section.

\begin{cor}[Euler-Lagrange equations for a critical immersion]\label{cr:extr}
A pair of operators $(b,a)$ on $(S,h)$ corresponds to 
a critical equivariant immersion $(\tilde{f},\rho)$
if and only if the following equations are satisfied:
\begin{eqnarray}
d^\nabla b=0\,, && d^\nabla(ba)=0\,, \label{eq:str1}\\
\tr(Jb)=0\,, && \tr(ba)=0\,, \label{eq:str2}\\
  \tr(Jb^2a)=0\,, && \det b-\det(ba)=1\,, \label{eq:str3}\\
  b>0. && \label{eq:str4}
\end{eqnarray}
\end{cor}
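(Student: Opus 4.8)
The plan is to assemble the Euler--Lagrange equations from two inputs already established in the excerpt: the Gauss--Codazzi equations \eqref{eq:gauss-cod} that characterize immersion data of equivariant immersions, and the first-variation formula of Proposition \ref{pr:extr}. First I would recall that by the correspondence between equivariant immersions up to $\PSL_2(\CC)$ and solutions of \eqref{eq:gauss-cod}, the pair $(I,a)$ — equivalently the pair $(b,a)$ with $I(v,w)=h(bv,bw)$ — comes from an equivariant immersion if and only if $b>0$ is $h$-self-adjoint, $a$ is $I$-self-adjoint, $K_I=\det(a)-1$, and $d^{\nabla^I}\!a=0$. The task is then to re-express these two integrability conditions, together with the criticality condition, purely in terms of the background metric $h$ and the operators $b,a$.

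The criticality condition is the easy half. By Proposition \ref{pr:extr}, $\tilde f$ is critical among isomonodromic deformations exactly when the integrand $\nu\cdot\tr(ba)+I(bW,J^IX^T)$ has vanishing integral for every variational field $X=X^T+\nu N$. Since $\nu$ and $X^T$ range over all smooth functions and tangent vector fields independently, this forces $\tr(ba)=0$ (the normal part) and $bW=0$, hence $W=0$ since $b$ is invertible; but $W=\det(b)^{-1}b^{-1}*d^\nabla b$, so $W=0$ is equivalent to $d^\nabla b=0$, i.e.\ $b$ is Codazzi with respect to $h$. This gives the first equation of \eqref{eq:str1} and the second equation of \eqref{eq:str2}. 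The condition $\tr(Jb)=0$ in \eqref{eq:str2} is automatic: $b$ is $h$-self-adjoint while $J$ is $h$-skew-adjoint, so $Jb$ is traceless. (I would state this as a one-line linear-algebra remark.)

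The main work — and the step I expect to be the real obstacle — is translating the Gauss--Codazzi pair $\{K_I=\det(a)-1,\ d^{\nabla^I}\!a=0\}$ into $h$-language and matching it with the remaining equations $d^\nabla(ba)=0$, $\tr(Jb^2a)=0$, and $\det b-\det(ba)=1$. The key computational tool is the comparison of the two Levi-Civita connections $\nabla^I$ and $\nabla=\nabla^h$: since $I=h(b\cdot,b\cdot)$, one has the standard formula $\nabla^I_v w=b^{-1}\nabla_v(bw)+I(v,W)J^I w$ with $W$ as above (this is Lemma 3.7 of \cite{cyclic2}, already invoked in the proof of Proposition \ref{pr:extr}), and under the criticality assumption $W=0$ this simplifies to $\nabla^I_v w=b^{-1}\nabla_v(bw)$. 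Feeding this into the Codazzi equation $d^{\nabla^I}\!a=0$ and using $d^\nabla b=0$ should convert it, after conjugating by $b$, into $d^\nabla(ba)=0$ — the second equation of \eqref{eq:str1} — up to an extra term that one checks vanishes precisely because of criticality; tracking that extra term with the $J^I$-factor is where I expect the delicate bookkeeping. For the Gauss equation: $K_I$ relates to $K_h=-1$ by the standard conformal-type change under $b$, and $\det(a)$ (an $I$-invariant) equals $\det_h(ba)/\det_h(b)$; combined with $K_I=\det a-1$ and $K_h=-1$ this should yield, again modulo the criticality-forced terms, both the algebraic identity $\det b-\det(ba)=1$ in \eqref{eq:str3} and the auxiliary equation $\tr(Jb^2a)=0$, the latter arising as the ``imaginary part'' or skew component of the converted Gauss equation. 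Finally $b>0$ in \eqref{eq:str4} is just the definition of $b$ as the positive square root. The converse direction — that any $(b,a)$ solving \eqref{eq:str1}--\eqref{eq:str4} defines an $I$-self-adjoint $a$ with $(I,a)$ satisfying Gauss--Codazzi and $(b,a)$ critical — runs the same computations backwards, the point being that $\tr(ba)=0$ and $\tr(Jb^2a)=0$ together encode exactly that $a$ is $I$-self-adjoint and that the converted equations reassemble into \eqref{eq:gauss-cod}.
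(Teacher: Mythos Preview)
Your approach matches the paper's, and most of the pieces are assembled correctly. There is one misattribution worth fixing: the equation $\tr(Jb^2a)=0$ does \emph{not} come from the Gauss equation. The Gauss equation is a single scalar identity and yields only $\det b-\det(ba)=1$ (via $K_I=K_h/\det b=-1/\det b$, which holds precisely because $b$ is Codazzi, together with $K_I=\det a-1$ and $\det a=\det(ba)/\det b$). The condition $\tr(Jb^2a)=0$ is instead the translation of ``$a$ is $I$-self-adjoint'': since $J^I=b^{-1}Jb$, the $I$-self-adjointness of $a$ reads $\tr(J^I a)=\tr(b^{-1}Jba)=0$, and using $b^{-1}=-(\det b)^{-1}JbJ$ this becomes $\tr(Jb^2a)=0$. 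So in the converse direction it is $\tr(Jb^2a)=0$ alone that encodes $I$-self-adjointness of $a$, not the pair $\tr(ba)=0$ and $\tr(Jb^2a)=0$ together.

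A minor simplification: once $W=0$, the comparison formula reduces exactly to $\nabla^I_v w=b^{-1}\nabla_v(bw)$ with no residual $J^I$-term, so $d^{\nabla^I}a=b^{-1}d^\nabla(ba)$ directly and there is no ``delicate bookkeeping'' to track.
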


\begin{proof}
The first equation of (\ref{eq:str1}) and the second equation of
(\ref{eq:str2}) are simply the extremality conditions  following from Proposition \ref{pr:extr}. 

The first equation of (\ref{eq:str2}) is equivalent to the fact that $b$ is $h$-self-adjoint. Condition \eqref{eq:str4} is clearly necessary.

Since $\tilde{a}$ is the shape operator of an immersion
with first fundamental form $\tilde{I}$, we have that $d^{\nabla^I}a=0$.
On the other hand, since $b$ satisfies the Codazzi equation, we have 
$d^{\nabla^I}(\bullet)=b^{-1}d^\nabla  (b\bullet)$
and so $d^{\nabla^I}a=b^{-1}d^\nabla(ba)$: the second equation of (\ref{eq:str1}) follows.

Imposing that $a$ is $I$-self-adjoint, we have that $\tr (J^Ia)=0$. But $J^I=b^{-1}Jb$ and it follows that $\tr (b^{-1}Jba)=0$. Since $b$ is $h$-self-adjoint, 
$b^{-1}=-(\det b)^{-1} JbJ$, and the first equation of (\ref{eq:str3}) follows.

Finally by the Gauss equation, $K_I=K_{\HH^3}+\det(a)=
-1+\det(a)$. On the other hand, since $b$ is a Codazzi operator and $h$ is hyperbolic, $K_I=K_h/\det(b)=-1/\det b$. The second equation of (\ref{eq:str3}) easily follows by comparing these identities. 
\end{proof}

The proof of Lemma \ref{lm:basic} follows directly from this corollary.

\begin{proof}[Proof of Lemma \ref{lm:basic}]
  If $f:\tilde{S}\to \HH^2$ is a $\rho$-equivariant local diffeomorphism, then $a=0$ so Equation \eqref{eq:str1} reduces to $d^\nabla b=0$, \eqref{eq:str2} is equivalent to $b$ being self-adjoint for $h$, and \eqref{eq:str3} to $\det(b)=1$. Those are precisely the conditions for $f$ to be minimal Lagrangian, see e.g. \cite{L6}.
\end{proof}

\subsection{Complex interpretation of the Euler-Lagrange equations}

Given a pair of linear operators $(b,a)$ on $TS$, it is convenient to introduce the operator $\phi:T_\CC S\rar T_\CC S$ on the complexified tangent bundle  $T_\CC S=\CC\otimes_\RR TS$ defined as
\[
   \phi:=b-iJba~.
\]
Denoting (with some abuse) by $\nabla$ the $\CC$-linear extension to $T_\CC S$ of the Levi Civita connection of $h$, the Euler-Lagrange equations in Corollary \ref{cr:extr} can be rephrased in a more compact way, a more precise version of Theorem \ref{main:immersion-data}.

\begin{cor}[Complex Euler-Lagrange equations for a critical immersion]\label{cr:extcplx}
The operator $\phi=b-iJba$ corresponds to a critical equivariant immersion
if and only if $\phi\in\Cod$, i.e.
\begin{eqnarray}
& d^\nabla\phi=0~, \label{eq:strcplx1}\\
& \text{$\phi$ is $h$-self-adjoint } \label{eq:strcplx2}\\
& \det(\phi)=1~, \label{eq:strcplx3}\\
&  \text{$\Re(\phi)$ is positive definite.} \label{eq:strcplx4}
\end{eqnarray}
\end{cor}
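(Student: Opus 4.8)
The plan is to show that the four scalar/operator equations of Corollary \ref{cr:extr} are equivalent, under the dictionary $\phi=b-iJba$, to the four conditions \eqref{eq:strcplx1}--\eqref{eq:strcplx4}. The natural strategy is to decompose $\phi$ into its real and imaginary parts, $\Re(\phi)=b$ and $\Im(\phi)=-Jba$, and translate each complex condition into a pair of real conditions on $b$ and $a$, comparing with Corollary \ref{cr:extr}. First I would handle \eqref{eq:strcplx4}: since $\Re(\phi)=b$ this is literally condition \eqref{eq:str4}, so there is nothing to do.

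Next I would treat the Codazzi condition \eqref{eq:strcplx1}. Because $\nabla$ is extended $\CC$-linearly, $d^\nabla\phi = d^\nabla b - i\, d^\nabla(Jba)$, and since $J$ is $\nabla$-parallel on the hyperbolic surface $(S,h)$, one has $d^\nabla(Jba)=J\,d^\nabla(ba)$. Hence $d^\nabla\phi=0$ is equivalent to the simultaneous vanishing $d^\nabla b=0$ and $d^\nabla(ba)=0$, which are exactly the two equations of \eqref{eq:str1}. For \eqref{eq:strcplx2}, self-adjointness of a $\CC$-linear operator with respect to the complex-bilinear extension of $h$ is equivalent to self-adjointness of both $\Re(\phi)=b$ and $\Im(\phi)=-Jba$; the first is the statement $\tr(Jb)=0$ (first equation of \eqref{eq:str2}, using that in dimension $2$ an endomorphism is $h$-self-adjoint iff $J$-trace vanishes), while $h$-self-adjointness of $Jba$ unwinds, via $b$ being self-adjoint and $J^2=-\Id$, to the two conditions $\tr(ba)=0$ and $\tr(Jb^2a)=0$, i.e. the second equation of \eqref{eq:str2} together with the first of \eqref{eq:str3}. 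Here I would be slightly careful: one should check that the pair ``$Jba$ is $h$-self-adjoint'' really does split into precisely those two scalar equations; the clean way is to write a general endomorphism of a $2$-dimensional oriented inner product space as a combination of $\Id$, $J$, and two traceless self-adjoint generators, and read off coefficients.

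Finally, for \eqref{eq:strcplx3}, I would compute $\det\phi$ using the $2\times 2$ identity $\det(\Re(\phi)+i\,\Im(\phi)) = \det\Re(\phi) - \det\Im(\phi) + i\big(\text{mixed term}\big)$, more precisely $\det(P+iQ)=\det P-\det Q+i\,(\text{tr of the adjugate pairing of }P,Q)$ for $2\times 2$ matrices. With $P=b$ and $Q=-Jba$, the real part gives $\det b-\det(Jba)=\det b-\det(ba)$ (since $\det J=1$), so the real part of $\det\phi=1$ is the second equation of \eqref{eq:str3}; the imaginary part of $\det\phi$ is a bilinear expression in $b$ and $Jba$ which I expect to vanish automatically once $b$ is self-adjoint and the self-adjointness conditions on $Jba$ hold --- checking this vanishing is the one genuinely computational point, and I anticipate it is the main (minor) obstacle. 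Conversely, assembling the equations of Corollary \ref{cr:extr} yields all four complex conditions, and then an appeal to Corollary \ref{cr:extr} itself (which already identifies such $(b,a)$ with critical equivariant immersions) closes the loop. So the only real work is the linear-algebra bookkeeping over the $2$-dimensional bundle $T_\CC S$.
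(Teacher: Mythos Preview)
Your overall plan---split $\phi$ into real and imaginary parts and match each complex condition with a pair of real ones from Corollary~\ref{cr:extr}---is exactly the paper's approach, and your treatment of \eqref{eq:strcplx1} and \eqref{eq:strcplx4} is correct. However, the allocation of scalar conditions between \eqref{eq:strcplx2} and \eqref{eq:strcplx3} is wrong, and this is not just a cosmetic mismatch: it leads you to an incorrect expectation about $\Im(\det\phi)$.

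In dimension $2$, an operator $M$ is $h$-self-adjoint if and only if $\tr(JM)=0$, a \emph{single} scalar condition. Applying this to $\Im(\phi)=-Jba$ gives $\tr(J(-Jba))=\tr(ba)=0$, nothing more. Together with $\tr(Jb)=0$ (self-adjointness of $\Re(\phi)=b$), condition \eqref{eq:strcplx2} is therefore equivalent to exactly the two equations of \eqref{eq:str2}, not to three conditions. The equation $\tr(Jb^2a)=0$ does \emph{not} come from self-adjointness.

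Where it does come from is the determinant. Using the $2\times 2$ identity $\det(B+C)=\det B+\det C-\tr(JBJC)$ for $B$ symmetric (which follows from $\det(B)\Id=-B^TJBJ$), one finds with $B=b$ and $C=-iJba$ that
\[
\det\phi=\det b-\det(ba)+i\,\tr(Jb^2a)~.
\]
Thus $\det\phi=1$ is equivalent to the \emph{pair} of equations in \eqref{eq:str3}: the real part gives $\det b-\det(ba)=1$, and the imaginary part gives $\tr(Jb^2a)=0$. Your anticipation that $\Im(\det\phi)$ vanishes automatically once self-adjointness holds is therefore false; it is an independent condition (geometrically, it encodes that $a$ is $I$-self-adjoint). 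Once this reallocation is made, your argument goes through and coincides with the paper's.
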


\begin{proof}
The fact that \eqref{eq:strcplx1} and \eqref{eq:strcplx2} are equivalent to \eqref{eq:str1} and \eqref{eq:str2} respectively is straightforward.

Equation \eqref{eq:strcplx3} is equivalent to Equation \eqref{eq:str3}.
In fact, if $B,C$ are complex $2\times 2$ matrices and
$J$ is a complex skew-symmetric $2\times 2$ matrix such that $J^2=-\Id$,
then $\det(B)\Id=-B^T JBJ$, and so
$\det(B+C)=\det(B)+\det(C)-\tr(JB^T JC)$. If moreover $B$ is symmetric, we have
\[
\det(B+C)=\det(B)+\det(C)-\tr(JBJC)\,.
\]
As a consequence,
$$ \det(\phi)=\det(b)-\det(ba) +i\tr(Jb^2a)\,. $$
Finally \eqref{eq:strcplx4} is clearly equivalent to \eqref{eq:str4}.
\end{proof}



\begin{remark}
Equation \eqref{eq:strcplx3} in Corollary \ref{cr:extcplx} can be replaced by
$\tr\left((J\phi)^2\right)=-2$.
\end{remark}

\subsection{Local structure of the space of immersion data}

We now turn to a more precise analysis of the space $\Data$, with the goal of proving Theorem \ref{main:manifold-structure} stating that $\Data$ is a finite-dimensional complex manifold. 

\begin{defi}[Space of Codazzi operators]
The {\it{space of complex Codazzi operators}} $\Cod$
is the space of smooth $h$-self-adjoint bundle morphism $\phi:T_\C S\to T_\C S$ satisfying the Codazzi equation $d^\nabla \phi=0$. 
\end{defi}

Clearly $\Cod$ is infinite-dimensional. It contains a subspace that is in one-to-one correspondence with the space $C^\infty(S,\CC)$ of $C^\infty$ complex-valued function on $S$, as shown in the following lemma.


\begin{lemma}[The operator $\cod$]
For every $u\in\C^\infty(S,\CC)$ define
\[
\cod(u): = u\Id - \Hess(u)\in\End(T_\CC S)
\]
where $\Id\in\End(T_\CC S)$ is the identity map, $\Hess(u)\in\End(T_\CC S)$
is defined as $\Hess(u)=\nabla(\grad u)$ and the gradient is computed with respect to the metric $h$.
Then
\begin{itemize}
\item[(a)]
$\cod(u)$ is $h$-self-adjoint and satisfies the Codazzi equation;
\item[(b)]
the map $\cod:C^\infty(S,\CC)\rar \Cod$ is $\CC$-linear,
continuous and injective onto a closed subspace $\Codtr$.
\end{itemize}
\end{lemma}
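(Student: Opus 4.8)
The statement has two parts. For part (a), I would argue directly that $\cod(u)$ is $h$-self-adjoint and Codazzi. Self-adjointness is a pointwise linear-algebra fact: $\Id$ is obviously $h$-self-adjoint, and $\Hess(u) = \nabla(\grad u)$ is $h$-self-adjoint because the covariant Hessian of a function is a symmetric $2$-tensor (this uses that $\nabla$ is torsion-free and that $\grad$ is the metric dual of $du$); complex-linearity of the extension preserves this. For the Codazzi equation $d^\nabla \cod(u) = 0$, I would compute $(d^\nabla \cod(u))(v,w) = \nabla_v(\cod(u)w) - \nabla_w(\cod(u)v) - \cod(u)[v,w]$ and split into the $u\Id$ part and the $\Hess(u)$ part. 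The $u\Id$ part contributes $(dv)(u)\,w - (dw)(u)\,v$ type terms, i.e. essentially $du \wedge \Id$ contracted, and the $\Hess(u)$ part contributes the ``third covariant derivative'' antisymmetrized, which by the Ricci identity equals a curvature term $R(v,w)\grad u$. On a surface of constant curvature $-1$ one has $R(v,w)z = -(h(w,z)v - h(v,z)w)$, and this curvature term exactly cancels the $u\Id$ contribution. So the key computation is the commutation of covariant derivatives applied to $\grad u$, combined with the explicit form of the curvature tensor of $h$. Everything extends by $\CC$-linearity to $T_\CC S$, so it suffices to check it for real $u$.

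For part (b), $\CC$-linearity and continuity of $\cod$ are immediate since $u\mapsto u\Id$ and $u\mapsto\Hess(u)$ are $\CC$-linear differential operators, hence continuous for the $C^\infty$ topologies. For injectivity, suppose $\cod(u) = 0$, i.e. $\Hess(u) = u\Id$. Tracing gives $\Delta u = 2u$ (with the sign convention making $\Delta$ the negative of the trace of the Hessian, or $\tr\Hess(u) = 2u$), so $u$ is an eigenfunction of the Laplacian. On a closed surface the relevant eigenvalue has the wrong sign to support a nonzero solution: integrating $u\cdot\tr\Hess(u) = 2u^2$ against $\da$ and integrating by parts yields $-\int_S |\grad u|^2\,\da = 2\int_S u^2\,\da$, forcing $u \equiv 0$. (For complex $u$ apply this to real and imaginary parts, or use $\int |u|^2$.) This also shows the differential operator $\cod$ has closed range: it is an injective, continuous linear map of Fréchet spaces, but more usefully $\cod$ is (up to lower-order terms) the Hessian, an overdetermined elliptic operator, and its image $\Codtr$ is closed — I would identify $\Codtr$ as the orthogonal complement (in a suitable sense) of the space of traceless transverse operators, or simply invoke that an elliptic operator between sections of vector bundles over a compact manifold has closed range. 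Alternatively, and more elementarily for this paper's purposes, closedness of $\Codtr$ in $\Cod$ follows once one shows $\phi \in \Codtr$ if and only if $\phi$ is ``pure trace'' in the sense that it has no $\cQ_\CC$-component in the decomposition quoted before Theorem~B; I would defer the sharpest version of this to Proposition~\ref{pr:decomposition}.

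The main obstacle, I expect, is the closedness of the range $\Codtr$: injectivity and continuity are routine, but ``injective continuous linear map of Fréchet spaces'' does not by itself give a closed (or complemented) image. The honest route is the elliptic theory: $\cod$ differs from $-\Hess$ by a zeroth-order term, $\Hess$ is overdetermined-elliptic, so $\cod$ has finite-dimensional cokernel obstruction in each Sobolev completion and closed range, and the $C^\infty$ statement follows by elliptic regularity. Since the paper will anyway establish the full splitting $\Cod = C^\infty(S,\CC)\oplus \cQ_\CC$ in Proposition~\ref{pr:decomposition}, the cleanest presentation is to prove (a) and the ``$\CC$-linear, continuous, injective'' part of (b) here by the direct computations above, and to phrase ``onto a closed subspace $\Codtr$'' as a consequence of that forthcoming decomposition, so that no heavy functional analysis is needed at this point in the exposition.
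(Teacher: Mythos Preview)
Your proposal is correct and follows essentially the same route as the paper: for (a) you split $d^\nabla\cod(u)$ into the $u\Id$ and $\Hess(u)$ pieces and use the curvature identity on the hyperbolic surface to see they cancel, exactly as the paper does; for (b) you argue $\CC$-linearity and continuity are immediate, prove injectivity by tracing, and defer closedness of the image to Proposition~\ref{pr:decomposition}, which is precisely the paper's organization. The only cosmetic difference is that the paper dispatches the equation $2u-\Delta u=0$ by the maximum principle, whereas you use integration by parts; both are standard and equally short.
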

\begin{proof}
Concerning claim (a), note that $\cod(u)$ is $h$-self-adjoint by definition, because $\Hess(u)$ is.
Consider now two vector fields $v,w$ on $S$. Then
  \begin{eqnarray*}
    (d^\nabla(u\Id))(v,w) & = & \nabla_v((u\Id)(w)) - \nabla_w((u\Id)(v)) - u\Id([v,w]) \\
                          & = & \nabla_v(uw) -\nabla_w(uv) - u(\nabla_vw-\nabla_wv) \\
    & = & du(v)w - du(w)v~. 
  \end{eqnarray*}
  It follows that
  $$ (d^\nabla(u\Id))(v,w) = \da(v,w)J\grad u ~. $$
  On the other hand, since $h$ is a hyperbolic metric on $S$, 
  \begin{eqnarray*}
    (d^\nabla \Hess(u))(v,w) & = & \nabla_v(\nabla_w\grad u) - \nabla_w(\nabla_v \grad u) - \nabla_{[v,w]}\grad u \\
                             & = & - R(v,w)\grad u \\
    & = & \da(v,w)J\grad u~. 
  \end{eqnarray*}
  As a consequence, $d^\nabla(u\Id-\Hess(u))=0$ and so $\cod(u)\in\Cod$.

Concerning claim (b), continuity is obvious,
since both spaces are endowed with the smooth topology.
As for the injectivity, suppose that $\cod(u)=0$ and so $\tr(\cod(u))=0$.
Since $\tr(\cod(u))=2u-\Delta u$ (where the Laplacian
is computed with respect to the metric $h$),
an easy application of the maximum principle shows that $u=0$.

The closure of the image $\Codtr$ of the map $\cod$ is a consequence
of Proposition \ref{pr:decomposition} below.
\end{proof}

%
%
%
%

We can now provide a canonical decomposition of self-adjoint complex Codazzi tensors on $S$ which will be useful for the proof of Theorem \ref{main:manifold-structure}, see \cite{bonsante-seppi:codazzi}.

\begin{prop}[Canonical decomposition of Codazzi operators]\label{pr:decomposition}
The map
\[
\Hod:\xymatrix@R=0in{
C^\infty(S,\CC)\times \cQ_\CC\ar[r] & \Cod\\
(u,q,q')\ar@{|->}[r] & \cod(u)+b_q+ib_{q'}
}
\]
is an $\RR$-linear isomorphism,
where $\cQ_\CC:=\cQ\otimes_\RR\CC$ is the complexification of the space of $J$-holomorphic quadratic differentials on $S$,
and $b_q,b_{q'}$ are the endomorphism of $TS$ associated to the
quadratic forms $\Re(q),\Re(q')$ via $h$.
%
\end{prop}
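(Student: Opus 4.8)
The plan is to verify separately that $\Hod$ is injective and surjective, since $\RR$-linearity and continuity are evident. Injectivity will follow from the corresponding statement for the real operator: if $\cod(u) + b_q + i b_{q'} = 0$, then taking real and imaginary parts gives $\cod(u_1) + b_q = 0$ and $\cod(u_2) + b_{q'} = 0$ where $u = u_1 + i u_2$, so it suffices to show that the real map $(u,q)\mapsto \cod(u)+b_q$ from $C^\infty(S,\RR)\times\cQ$ to real self-adjoint Codazzi operators is injective. For this, observe that $\cod(u)+b_q = 0$ forces $\tr(b_q) = 0$ (since $b_q$ is traceless, being associated to a trace-free quadratic differential with respect to a conformal metric) hence $\tr(\cod(u)) = 2u - \Delta u = 0$, which by the maximum principle yields $u = 0$; then $b_q = 0$ and, since $q\mapsto b_q$ is injective, $q = 0$.

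The substantive part is surjectivity: given an arbitrary $h$-self-adjoint complex Codazzi operator $\phi$, one must produce $u \in C^\infty(S,\CC)$ and $q, q'\in\cQ$ with $\phi = \cod(u) + b_q + ib_{q'}$. By splitting into real and imaginary parts, it again suffices to treat a real self-adjoint Codazzi operator $\psi$. The standard approach (as in \cite{bonsante-seppi:codazzi}) is to split $\psi$ into its trace part and trace-free part: write $\psi = \tfrac12\tr(\psi)\Id + \psi_0$ where $\psi_0$ is trace-free self-adjoint, hence corresponds to a quadratic differential; the Codazzi condition $d^\nabla\psi = 0$ then translates, via the decomposition, into a $\db$-type equation on the $(2,0)$-part together with a PDE relating the trace function to the non-holomorphic part of that quadratic differential. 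Concretely, one seeks $u$ solving an elliptic equation of the form $2u - \Delta u = \tr(\psi) + (\text{correction from the non-holomorphic part})$, after which $\psi - \cod(u)$ is trace-free, self-adjoint, and Codazzi, and one checks that a trace-free self-adjoint Codazzi operator on a hyperbolic surface is exactly $b_q$ for a genuine $J$-holomorphic quadratic differential $q$ — this last step uses that the Codazzi equation for the trace-free part is equivalent to holomorphicity of the associated quadratic differential. The solvability of the elliptic equation for $u$ follows because the operator $u \mapsto 2u - \Delta u$ is invertible on $C^\infty(S,\RR)$ (its kernel is trivial by the maximum principle, and it is self-adjoint Fredholm of index zero).

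The main obstacle is the surjectivity step, and specifically the bookkeeping that shows the Codazzi condition decouples cleanly into "holomorphic quadratic differential" plus "image of $\cod$". One must be careful that the trace-free part of $\psi - \tfrac12\tr(\psi)\Id$ need not be holomorphic a priori, so the role of $\cod(u)$ is precisely to absorb the anti-holomorphic obstruction; identifying the correct elliptic equation for $u$ (so that the remainder becomes holomorphic) is where the real work lies. I would either cite \cite{bonsante-seppi:codazzi} for this decomposition or reproduce the short computation identifying $d^\nabla$ applied to a trace-free self-adjoint operator with the $\db$-operator on quadratic differentials, together with the computation $d^\nabla\cod(u) = 0$ already carried out above, which guarantees $\cod(u)$ contributes nothing to the Codazzi defect and hence the splitting is well-posed. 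Finally, closedness of $\Codtr = \mathrm{im}(\cod)$ is immediate once $\Hod$ is known to be an isomorphism, since $\Codtr$ is then the image of the closed subspace $C^\infty(S,\CC)\times\{0\}$ under a topological isomorphism.
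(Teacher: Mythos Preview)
Your approach is correct and is essentially the paper's own argument, but you have made the surjectivity step sound harder than it is. There is no ``correction from the non-holomorphic part'' in the elliptic equation and no separate ``anti-holomorphic obstruction'' to absorb: one simply solves $2u-\Delta u=\tr(\psi)$, which is possible since $-\Delta+2$ is invertible on $C^\infty(S,\RR)$. Then $\psi-\cod(u)$ is self-adjoint (difference of self-adjoint operators), Codazzi (difference of Codazzi operators, since $d^\nabla\cod(u)=0$ was already verified), and traceless (by the choice of $u$). At that point the classical fact you cite---that a traceless $h$-self-adjoint Codazzi tensor is precisely $b_q$ for a holomorphic quadratic differential $q$---finishes the job immediately. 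The naive splitting $\psi=\tfrac12\tr(\psi)\Id+\psi_0$ you start with is a detour: $\tfrac12\tr(\psi)\Id$ is not Codazzi in general, which is why you found yourself hunting for a correction, but replacing it by $\cod(u)$ (which \emph{is} Codazzi and has the same trace) resolves everything in one stroke. The paper's proof is exactly this two-step argument, and your injectivity and closedness remarks match it as well.
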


\begin{proof}
We recall that endomorphisms of $TS$ which are $h$-self-adjoint, traceless and satisfy Codazzi
are exactly of type $b_q$ for some holomorphic quadratic differential $q$.
Hence, $\Hod$ is well-defined.

Note that $\Hod$ is continuous and $\RR$-linear.
In order to describe its inverse, let $\phi\in\Cod$.
Define $\tau:=\tr(\phi)$ and denote by $\Delta u = \tr(\Hess(u))$ the Laplacian of $u$ (with respect to $h$). The equation
  $$ -\Delta u +2u=\tau $$
  has a unique solution, since $\Delta$ as defined here is a negative operator
  and so $-\Delta+2$ is invertibile.
Clearly $\phi-\cod(u)$ is self-adjoint and Codazzi, and is also traceless since by construction $\tr(\cod(u))=-\Delta u+2u=\tr(\phi)$. The real and imaginary parts of $\phi-\cod(u)$ are therefore self-adjoint, Codazzi and traceless, and so each of them is equal to the endomorphism
associated to the real part of a holomorphic quadratic differential, as claimed.
%
%
It is easy to see that $\Hod^{-1}$ is continuous, since $(-\Delta+2)^{-1}:C^\infty(S,\CC)\rar C^\infty(S,\CC)$ is a continuous operator. 
\end{proof}

As mentioned in the introduction, the previous proposition allows for the definition of a map
$$ Q:\Cod\to \cQ_\C~, $$
simply by composing $\Hod^{-1}$ with the projection onto $\cQ_\CC$.

Recalling that $\Data\subset\Cod$,
we are now fully equipped to state Theorem B, which we recall here.

\begin{mainthm}{B}[Manifold structure on the space of minimizing maps]
Let $(S,h)$ be a hyperbolic surface.
The space $\Data$ of immersion data is a complex submanifold of $\Cod$ of complex dimension $6g-6$.
Moreover, the restriction of $Q$ over $\Data$ is a local biholomorphism.
\end{mainthm}

The proof will use an additional map that extends the $Q$ defined above.
We denote by $\Pi$ the map
\[
\Pi: \xymatrix@R=0in{
\Cod\ar[r] & C^\infty(S,\CC)\oplus \cQC\\
\phi\ar@{|->}[r] & \big(\Pi_1(\phi),Q(\phi)\big), 
}
\]
where $\Pi_1:\Cod\rar C^\infty(S,\CC)$ is defined as
\[
\Pi_1(\phi):=\tr((J\phi)^2)\,. 
\]
Since $\Data=\Pi_1^{-1}(2)$, Theorem B is an immediate consequence of the following proposition.

\begin{prop}[$\Pi$ is a local diffeomorphism]
Fix $\phi\in \Data$. Then $\Pi$ is a local diffeomorphism at $\phi$.
As a consequence, the restriction of $Q$ to $\Data$ is a local diffeomorphism at $\phi$.
\end{prop}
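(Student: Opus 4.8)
The plan is to prove that $\Pi$ is a local diffeomorphism at $\phi\in\Data$ by appealing to the implicit/inverse function theorem in the relevant Fréchet (or tame Banach-scale) setting, which reduces everything to showing that the linearization $d\Pi_\phi$ is an isomorphism onto $C^\infty(S,\CC)\oplus\cQ_\CC$. Since $\Pi_1(\phi)=\tr((J\phi)^2)$ and $Q=\mathrm{pr}_{\cQ_\CC}\circ\Hod^{-1}$ is already linear and continuous, the content is entirely in the first component. So first I would compute $d(\Pi_1)_\phi(\dot\phi)$ for a tangent vector $\dot\phi\in\Cod$ (i.e.\ a smooth $h$-self-adjoint Codazzi endomorphism of $T_\CC S$): using $\tr((J\phi)^2)$, one gets $d(\Pi_1)_\phi(\dot\phi)=2\,\tr(J\phi J\dot\phi)$. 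The key algebraic identity is the polarization of the determinant that already appeared in the proof of Corollary~\ref{cr:extcplx}: for $h$-self-adjoint $B$ one has $\tr(JBJC) = \det(B)\tr(B^{-1}C)$ when $B$ is invertible, so $d(\Pi_1)_\phi(\dot\phi) = -2\det(\phi)\,\tr(\phi^{-1}\dot\phi) = -2\,\tr(\phi^{-1}\dot\phi)$ using $\det\phi=1$. (Equivalently this is $-2$ times the derivative of $\det$, consistent with $\Data$ being the level set of $\det$.)

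Next I would use the canonical decomposition of Proposition~\ref{pr:decomposition}: write $\dot\phi = \cod(\dot u) + b_{\dot q} + i b_{\dot q'}$ with $\dot u\in C^\infty(S,\CC)$ and $\dot q,\dot q'\in\cQ$, so that $Q(\dot\phi)=(\dot q,\dot q')$ reads off the last two components directly and $d\Pi_\phi$ is upper-triangular with respect to the splitting $C^\infty(S,\CC)\oplus\cQ_\CC$: the $\cQ_\CC$-part of $d\Pi_\phi(\dot\phi)$ is exactly $(\dot q,\dot q')$, and it only remains to see that, after fixing $(\dot q,\dot q')$ (say $=0$, by triangularity it suffices), the map $\dot u\mapsto \tr(\phi^{-1}\cod(\dot u))$ is an isomorphism $C^\infty(S,\CC)\to C^\infty(S,\CC)$. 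Expanding, $\tr(\phi^{-1}\cod(\dot u)) = \tr(\phi^{-1})\dot u - \tr(\phi^{-1}\Hess(\dot u))$, which is a second-order linear differential operator $L_\phi$ acting on $\dot u$ whose principal symbol at a covector $\xi$ is $-\langle \phi^{-1}\xi^\sharp,\xi^\sharp\rangle$ (up to sign/constant), hence is a negative-definite quadratic form in $\xi$ because $\Re(\phi)$ is positive definite and $\det\phi=1$ forces $\phi^{-1}$ (real part) positive as well. Thus $L_\phi$ is (real-)elliptic; one then shows it is invertible on $C^\infty(S,\CC)$ by the usual elliptic package — Fredholm of index $0$ on Sobolev completions, plus triviality of kernel and cokernel.

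The kernel/cokernel vanishing is the step I expect to be the main obstacle, since $\phi$ is complex-valued so $L_\phi$ is \emph{not} self-adjoint and a naive maximum-principle argument (as used for the real operator $-\Delta+2$ in Proposition~\ref{pr:decomposition}) does not literally apply. The plan here is: for the kernel, if $L_\phi\dot u=0$ then $\tr(\phi^{-1}\cod(\dot u))=0$, i.e.\ $\dot\phi_0:=\cod(\dot u)$ is a \emph{traceless} (with respect to the pairing $\langle A,B\rangle_\phi:=\tr(\phi^{-1}B)$-orthogonality to $\Id$\ldots) — more usefully, $\cod(\dot u)$ is a Codazzi tensor lying in $\Codtr$ only if $\dot u=0$ by injectivity of $\cod$, so one must instead argue that $\cod(\dot u)$ being in the kernel means it is tangent to $\Data$ at $\phi$ and then run a real-analytic or integral argument; alternatively, split $\dot u = u_1 + i u_2$ and write $L_\phi$ as a system, using positivity of $\Re(\phi^{-1})$ and compactness of $S$ to get an a priori $L^2$-estimate $\|\nabla \dot u\|^2 \lesssim \mathrm{Re}\langle L_\phi\dot u,\dot u\rangle + \|\dot u\|^2$ that, combined with the zeroth-order term $\tr(\phi^{-1})\dot u$ whose real part is positive, closes to show $\dot u=0$. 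For the cokernel one observes $L_\phi^* = L_{\bar\phi^{\,T}}$-type operator (the formal adjoint) has the same structure with $\bar\phi$ in place of $\phi$ — and $\bar\phi\in\Data$ too since conjugation preserves self-adjointness, Codazzi, $\det=1$ and positivity of the real part — so the cokernel vanishes by the identical argument. With $L_\phi$ an isomorphism, $d\Pi_\phi$ is an isomorphism, and the inverse function theorem (in the appropriate tame Fréchet sense, cf.\ Hamilton, or simply by working on Banach scales and bootstrapping elliptic regularity) gives that $\Pi$ is a local diffeomorphism at $\phi$; restricting to $\Data=\Pi_1^{-1}(2)$ and projecting to $\cQ_\CC$ then shows $Q|_\Data$ is a local diffeomorphism at $\phi$, which is the claimed statement and yields Theorem~B.
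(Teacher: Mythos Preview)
Your strategy is correct and coincides with the paper's: reduce to invertibility of $d\Pi_\phi$, use the block-triangular structure coming from $\Hod$ to reduce further to invertibility of $L_\phi:\dot u\mapsto 2\tr(J\phi J\,\cod(\dot u))$ on $C^\infty(S,\CC)$, and then prove this second-order elliptic operator is an isomorphism. Your computation of the linearization and the identification of the principal symbol are right.

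Where your outline is vague is exactly the point you flag as the obstacle, and the paper's resolution is sharper than the Fredholm-plus-kernel/cokernel route you sketch. Rather than separately analysing $\ker L_\phi$ and $\ker L_\phi^*$, the paper establishes the two-sided coercivity estimate
\[
c^{-1}\|\dot u\|_{H^1}^2 \;\le\; -\Re\langle L_\phi\dot u,\dot u\rangle_{L^2}\;\le\; c\,\|\dot u\|_{H^1}^2
\]
directly, which gives invertibility in one stroke (Lax--Milgram plus elliptic regularity). The key device is an integration-by-parts identity valid for any \emph{real} $h$-self-adjoint Codazzi operator $\varphi$:
\[
\int_S \dot u\,\tr\bigl(J\varphi J\,\Hess(\dot u')\bigr)\,\da
\;=\;\int_S \det(\varphi)\,\bigl\langle \varphi^{-1}\grad\dot u,\grad\dot u'\bigr\rangle\,\da,
\]
which is \emph{symmetric} in $\dot u,\dot u'$ (it follows from $\nabla^*(J\varphi J)=0$, a consequence of $d^\nabla\varphi=0$). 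Splitting $\phi=\phi_{_\Re}+i\phi_{_\Im}$ and $\dot u=\dot u_{_\Re}+i\dot u_{_\Im}$, the cross-terms in $-\Re\langle L_\phi\dot u,\dot u\rangle$ that involve $\phi_{_\Im}$ and mix $\dot u_{_\Re}$ with $\dot u_{_\Im}$ cancel precisely by this symmetry. What remains involves only $\phi_{_\Re}=\Re(\phi)>0$, namely $\int_S\tr(\phi_{_\Re})|\dot u|^2+\int_S\det(\phi_{_\Re})\langle\phi_{_\Re}^{-1}\grad\dot u,\grad\dot u\rangle$, which is manifestly equivalent to $\|\dot u\|_{H^1}^2$. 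This cancellation is the substantive point your sketch is missing; once you have it, the non-self-adjointness of $L_\phi$ causes no trouble and there is no need to invoke $\bar\phi$ or analyse the cokernel separately.
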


\begin{proof}
It is sufficient to show that, if $\phi\in \Data$, then $d\Pi_\phi: \Cod=\Codtr\oplus\cQC \rightarrow C^\infty(S,\CC)\oplus \cQC$ is invertible.
This is equivalent to showing that
$(d\Pi_1)_\phi|_{\Codtr}$
is invertible.
Since $\Codtr$ is the image of $\cod$, it is enough to prove that
\[
L_\phi:=(d\Pi_1)_\phi\circ \cod:C^\infty(S,\CC)\rar C^\infty(S,\CC)
\]
is invertible, which is in fact the content of Lemma \ref{lemma:L} below.
\end{proof}

\begin{lemma}\label{lemma:L}
  For all $\phi\in \Data$, the operator $L_\phi$ is invertible.
\end{lemma}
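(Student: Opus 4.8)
I want to show that $L_\phi = (d\Pi_1)_\phi\circ\cod : C^\infty(S,\CC)\to C^\infty(S,\CC)$ is invertible. The first step is a direct computation of the differential. Writing $\Pi_1(\phi)=\tr((J\phi)^2)$, the variation in the direction $\cod(u)$ is
\[
L_\phi(u) = 2\,\tr\big(J\phi\,J\,\cod(u)\big) = 2\,\tr\big(J\phi\,J\,(u\Id-\Hess(u))\big)\,.
\]
Since $\phi\in\Data$ we have $\det\phi=1$, hence $J\phi J = -\phi^{-1}$ (using the identity $\det(B)\Id=-B^{T}JBJ$ for symmetric $B$, which gives $JBJ=-\det(B)B^{-1}$ for $h$-self-adjoint $B$). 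So
\[
L_\phi(u) = -2\,\tr\big(\phi^{-1}(u\Id-\Hess(u))\big) = -2u\,\tr(\phi^{-1}) + 2\,\tr(\phi^{-1}\Hess(u))\,.
\]
Thus $L_\phi$ is a second-order linear differential operator on complex-valued functions whose principal symbol is governed by $\phi^{-1}$, and since $\Re(\phi)$ is positive definite one expects $\phi^{-1}$ (or rather its real part) to be positive definite as well, making $L_\phi$ a (complex) elliptic operator of the form $u\mapsto \mathrm{tr}(\phi^{-1}\Hess(u)) + (\text{lower order})$.

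**Strategy for invertibility.** Because $S$ is closed, $L_\phi$ is a Fredholm operator of index $0$ between the appropriate Sobolev (or smooth Fréchet) spaces, so it suffices to prove injectivity; surjectivity and the smooth inverse then follow from elliptic regularity. For injectivity I would argue by contradiction: suppose $L_\phi(u)=0$ with $u$ not identically zero. The cleanest route is probably to relate $L_\phi$ back to the geometry. The key point is that the equation $\Pi_1(\phi)=2$ cutting out $\Data$ is, by Corollary \ref{cr:extcplx}, equivalent to $\det\phi=1$, and the kernel of $dL_\phi$ measures infinitesimal deformations of a minimizing immersion that fix the holomorphic quadratic differential $Q(\phi)$. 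So a nonzero $u\in\ker L_\phi$ would, via $\cod(u)$, produce a nontrivial isomonodromic-in-$Q$ deformation inside $\Data$. To rule this out I would use the maximum principle directly on the PDE: at an interior maximum of $\Re(u)$ (or of a suitable real functional of $u$), the Hessian term $\tr(\phi^{-1}\Hess(u))$ has a definite sign forced by positivity of $\Re(\phi^{-1})$, which combined with the zeroth-order term $-2u\,\tr(\phi^{-1})$ forces $u$ to vanish — this is the same mechanism used in the proof that $\cod$ is injective, where $-\Delta u + 2u=0 \Rightarrow u=0$, now with a variable-coefficient elliptic operator in place of $-\Delta+2$.

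**Main obstacle.** The delicate point is that $u$ and $\phi$ are complex, so $L_\phi$ is an operator on $\CC$-valued functions with $\CC$-valued coefficients, and a naive maximum principle on $|u|^2$ or on $\Re(u)$ need not apply directly: one has to check that the relevant quadratic form $\Re(\phi^{-1})$ (or the real part of the full symbol, after separating real and imaginary parts of the equation into a $2\times 2$ elliptic system) is definite, using only that $\Re(\phi)>0$ and $\det\phi=1$. Concretely one writes $\phi = \beta + i\gamma$ with $\beta>0$ and $\gamma$ $h$-self-adjoint, computes $\phi^{-1} = (\beta + \gamma\beta^{-1}\gamma)^{-1} - i(\ldots)$, and must verify that $\Re(\phi^{-1}) = (\beta + \gamma\beta^{-1}\gamma)^{-1}$ is positive definite — which is clear since $\beta + \gamma\beta^{-1}\gamma \geq \beta > 0$. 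With that in hand the real part of $L_\phi(u)=0$, at an extremum of $\Re(u)$, yields the sign contradiction; running the same argument on $\IM(u)$ (or iterating) forces $u\equiv 0$. Once injectivity is established, the index-zero Fredholm property of the elliptic operator $L_\phi$ on the closed surface $S$ gives bijectivity, and elliptic regularity gives that $L_\phi^{-1}$ preserves $C^\infty$, completing the proof of Lemma \ref{lemma:L}.
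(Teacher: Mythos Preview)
Your computation of $L_\phi(u)=2\tr\big(J\phi J\,\cod(u)\big)=-2u\,\tr(\phi^{-1})+2\tr(\phi^{-1}\Hess(u))$ is correct, as is the observation that $\Re(\phi^{-1})=(\beta+\gamma\beta^{-1}\gamma)^{-1}>0$ when $\phi=\beta+i\gamma$ with $\beta>0$. The ellipticity and index-zero reduction to injectivity are also fine.

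The gap is in the injectivity step. Writing $\phi^{-1}=A+iB$ and $u=u_1+iu_2$, the real part of $L_\phi(u)=0$ reads
\[
-u_1\tr A + u_2\tr B + \tr(A\,\Hess u_1) - \tr(B\,\Hess u_2)=0.
\]
At a maximum of $u_1$ the terms $-u_1\tr A$ and $\tr(A\,\Hess u_1)$ are both nonpositive, but the cross terms $u_2\tr B$ and $-\tr(B\,\Hess u_2)$ carry no sign: $B=\Im(\phi^{-1})$ is self-adjoint but indefinite in general, and $\Hess u_2$ is completely uncontrolled at an extremum of $u_1$. So no contradiction follows, and ``iterating'' on $u_2$ runs into the symmetric problem. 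This is the standard obstruction to applying the maximum principle to coupled elliptic systems; positivity of the diagonal block $A$ is not enough.

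The paper does not attempt a pointwise argument. Instead it proves the coercivity estimate
\[
\tfrac{1}{c}\|\dot u\|_{H^1}^2 \le \Re\langle -L_\phi\dot u,\dot u\rangle \le c\|\dot u\|_{H^1}^2
\]
by integrating by parts. The key computation (Sublemma~\ref{sublemma:div}) is that for a \emph{real} Codazzi self-adjoint $\varphi$ one has
\[
\int_S \dot u\,\tr\big(J\varphi J\,\Hess(\dot u')\big)\,\da=\int_S \det(\varphi)\,\langle\varphi^{-1}\grad\dot u,\grad\dot u'\rangle\,\da,
\]
which is \emph{symmetric} in $\dot u,\dot u'$. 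Splitting $\phi=\phi_\Re+i\phi_\Im$ and $\dot u=\dot u_\Re+i\dot u_\Im$, this symmetry forces the two cross terms involving $\phi_\Im$ in $\Re\langle L_\phi\dot u,\dot u\rangle$ to cancel exactly, leaving only the $\phi_\Re$ contributions:
\[
-\Re\langle L_\phi\dot u,\dot u\rangle=\int_S \tr(\phi_\Re)\,|\dot u|^2\,\da+\int_S \det(\phi_\Re)\big(\langle\phi_\Re^{-1}\grad\dot u_\Re,\grad\dot u_\Re\rangle+\langle\phi_\Re^{-1}\grad\dot u_\Im,\grad\dot u_\Im\rangle\big)\,\da,
\]
which is manifestly equivalent to the $H^1$ norm since $\phi_\Re>0$. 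The point is that the troublesome coupling through $\Im(\phi)$ disappears only \emph{after} integration; it cannot be removed pointwise, which is why your maximum-principle route stalls.
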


Before proving Lemma \ref{lemma:L}, we will need the following computation.

\begin{sublemma}\label{sublemma:div}
Let $\varphi:TS\rar TS$ be an $h$-self-adjoint operator that satisfies the Codazzi equation.
Then
\begin{equation}  \label{eq:vw}
\int_S \dot{u}\cdot \tr((J\varphi J)\Hess(\dot{u}')) \da  = \int_S \det(\varphi)\langle \varphi^{-1}\grad(\dot{u})\,,\grad(\dot{u}')\rangle \da
\end{equation}
for all $\dot{u},\dot{u}'\in C^\infty(S,\RR)$.
\end{sublemma}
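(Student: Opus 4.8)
The plan is to rewrite the left-hand side as an integral of a divergence plus the desired right-hand side, using integration by parts on $S$. First I would recall the pointwise identity for $h$-self-adjoint operators in dimension $2$: for any such $\varphi$ one has $\varphi^{-1} = \det(\varphi)^{-1}(\tr(\varphi)\Id - \varphi)$, equivalently $J\varphi J = \varphi - \tr(\varphi)\Id$, hence $\det(\varphi)\varphi^{-1} = -J\varphi J$ when (as is implicit here) $\det(\varphi)=1$; but to stay safe I would keep $\det(\varphi)$ general and use $\det(\varphi)\varphi^{-1} = \tr(\varphi)\Id - \varphi = -J\varphi J$, the last equality being the Cayley--Hamilton identity $\varphi^2 - \tr(\varphi)\varphi + \det(\varphi)\Id = 0$ multiplied by $\varphi^{-1}$ and rearranged via $J^2=-\Id$. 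Thus the right-hand side becomes $-\int_S \langle J\varphi J\,\grad(\dot u), \grad(\dot u')\rangle\,\da = \int_S \langle \varphi\, J\grad(\dot u), J\grad(\dot u')\rangle\,\da$.

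Next I would expand $\tr((J\varphi J)\Hess(\dot u'))$ and integrate the Hessian by parts against $\dot u$. Writing $\Hess(\dot u') = \nabla(\grad \dot u')$, one gets $\tr((J\varphi J)\Hess(\dot u')) = \langle J\varphi J, \nabla \grad \dot u'\rangle$ (pairing of endomorphisms), and $\int_S \dot u\,\langle J\varphi J,\nabla\grad\dot u'\rangle\,\da$ should be transformed, using that $\varphi$ (hence $J\varphi J$) is Codazzi so that $\Div$ of $J\varphi J$ relates to $\grad(\tr\varphi)$, into $-\int_S \langle (J\varphi J)\grad\dot u, \grad\dot u'\rangle\,\da$ up to a divergence term that integrates to zero on the closed surface $S$. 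Concretely: move one derivative off $\dot u'$ via $\Div((J\varphi J)^{t}(\dot u\,\grad\dot u')) = \dot u\,\tr((J\varphi J)\nabla\grad\dot u') + \langle (J\varphi J)\grad\dot u,\grad\dot u'\rangle + \dot u\,\langle\Div(J\varphi J),\grad\dot u'\rangle$, and observe that the Codazzi condition on $\varphi$ forces the last term to combine with a symmetric counterpart (obtained by also integrating by parts in the other variable, or directly from $\Div(J\varphi J) = -J\,\Div(\varphi)\cdot J^{-1}$ type identities) so that it does not contribute, or contributes only a further divergence. This is exactly the kind of computation already invoked in the proof of Proposition \ref{pr:extr} through Lemma 3.7 of \cite{cyclic2}, so I would cite that lemma: it states $\nabla^{I}_\bullet w = b^{-1}\nabla_\bullet(bw) + I(\bullet,W)J^{I}w$, which is the integrated-by-parts form of the Codazzi identity I need, applied with $b$ replaced by (a square root related to) $\varphi$.

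The main obstacle I anticipate is bookkeeping the divergence terms and the orientation/sign conventions for $J$ and the Hodge $*$: one must check that every boundary term genuinely vanishes because $S$ is closed, that the Codazzi hypothesis is used in exactly the right place to kill the would-be extra term $\dot u\,\langle\Div(J\varphi J),\grad\dot u'\rangle$, and that $\langle (J\varphi J)\grad\dot u,\grad\dot u'\rangle = -\langle \varphi\,J\grad\dot u, J\grad\dot u'\rangle = -\det(\varphi)\langle\varphi^{-1}\grad\dot u,\grad\dot u'\rangle$ with the correct sign, so that the two sides match rather than differing by a sign. Once the signs are pinned down and the Codazzi cancellation is verified, the identity \eqref{eq:vw} follows immediately by integrating the divergence to zero.
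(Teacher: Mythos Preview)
Your overall strategy matches the paper's: integrate by parts once to move a derivative from $\Hess(\dot u')$ onto $\dot u$, and finish with the Cayley--Hamilton identity $J\varphi J=-\det(\varphi)\varphi^{-1}$. That part is fine.

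The gap is precisely at the step you flag as the ``main obstacle''. The extra term $\dot u\,\langle\Div(J\varphi J),\grad\dot u'\rangle$ does not cancel against a ``symmetric counterpart'', nor is it a further divergence: it simply vanishes pointwise. The identity you need is
\[
\nabla^*(J\varphi J)\;=\;-J\,(d^\nabla\varphi)(e_1,e_2)\;=\;0,
\]
which the paper checks in two lines using an orthonormal frame $(e_1,e_2)$ and $Je_1=e_2$, $Je_2=-e_1$. Note this relates the \emph{divergence} of $J\varphi J$ to the \emph{exterior covariant derivative} $d^\nabla\varphi$, not to $\Div\varphi$; your tentative formula ``$\Div(J\varphi J)=-J\,\Div(\varphi)\,J^{-1}$'' is the wrong identity, since Codazzi says nothing about $\Div\varphi$. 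The reference to Lemma~3.7 of \cite{cyclic2} is also a red herring here: that lemma compares the connections $\nabla^I$ and $\nabla$ and is not what makes the extra term vanish. Once you establish $\nabla^*(J\varphi J)=0$, your integration-by-parts line collapses to
\[
\int_S \dot u\,\tr\big((J\varphi J)\Hess(\dot u')\big)\,\da
=-\int_S\langle (J\varphi J)\grad\dot u,\grad\dot u'\rangle\,\da,
\]
and your Cayley--Hamilton step gives the conclusion with the correct sign.
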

\begin{proof}
  The key observation is that, since $\varphi$ is a real Codazzi operator from $TS$ to $TS$, then the operator $J\varphi J$ is divergence-free for $h$, i.e.~$\nabla^*(J\varphi J)=0$. 
  Indeed, if $(e_1, e_2)$ is a local orthonormal frame, then
  \begin{eqnarray*}
    \nabla^*(J\varphi J) & = & J\nabla^*(\varphi J) \\
                         & = & -J(\nabla_{e_1}(\varphi Je_1) + \nabla_{e_2}(\varphi Je_2) - \varphi J(\nabla_{e_1}e_1 + \nabla_{e_2}e_2)) \\
                         & = & -J(\nabla_{e_1}(\varphi e_2) - \nabla_{e_2}(\varphi e_1) - \varphi (\nabla_{e_1}e_2-\nabla_{e_2}e_1) \\
                         & = & -J (d^\nabla\varphi) (e_1, e_2) \\
                         & = & 0~.
  \end{eqnarray*}
  As a consequence,
  \begin{eqnarray*}
    \int_S \dot{u} \cdot\tr((J\varphi J)\Hess(\dot{u}')) \da & = & \int_S \langle \dot{u}(J\varphi J),\,
    \nabla \grad(\dot{u}')\rangle \da \\
                                            & = & \int_S \langle \nabla^*(\dot{u} (J\varphi J)),\,\grad(\dot{u}')\rangle\da \\
                                            & = & \int_S \langle -(J\varphi J)\grad(\dot{u}),\,\grad(\dot{u}')\rangle \da ~,
  \end{eqnarray*}
  and the conclusion follows by observing that $J\varphi J = -\det(\varphi)\varphi^{-1}$.
 \end{proof}

%
We now have all the ingredients to prove Lemma \ref{lemma:L}.

\begin{proof}[Proof of Lemma \ref{lemma:L}]
  We denote by $\langle\cdot,\cdot \rangle$ the $L^2$ scalar product on complex-valued functions on $S$, defined by
  $$ \langle u_1,u_2\rangle = \Re\left(\int_S u_1\ol{u}_2 \,\da\right)~, $$
  and by $H^1(S)$ the Sobolev space of complex-valued $L^2$ functions with $L^2$ derivative on $S$. The operator $L_\phi$ extends to 
  a second-order linear differential operator $L_\phi:H^1(S)\rar H^{-1}(S)$
  as $L_\phi=2\tr(J\phi J\cdot\cod(\bullet))$.

  We will show that $L_\phi$ is continuous and strongly elliptic, by proving that there exists a constant $c>1$ such that for all $\dot u\in C^\infty(S, \C)$,
  \begin{equation}
    \label{eq:cc}
    \frac 1c \| \dot u\|^2_{H^1} \leq \Re \langle -L_\phi(\dot u),\dot u\rangle \leq c\| \dot u\|^2_{H^1}~. 
  \end{equation}
  The fact that $L_\phi$ is invertible from $C^\infty(S,\C)$ to itself then follows from standard elliptic regularity arguments (see, for instance,
  Theorem 3 in Section 6.3.1 of \cite{evans}).

  Let $\dot u=\dot u_{_\Re}+i\dot u_{_\Im}$, where $\dot u_{_\Re}$ and $\dot u_{_\Im}$ are smooth real-valued functions on $S$, and let $\phi=\phi_{_\Re}+i\phi_{_\Im}$, where $\phi_{_\Re}$ and $\phi_{_\Im}$ are real operators from $TS$ to $TS$. Then:
  \begin{eqnarray*}
    \Re\langle L_\phi \dot u,\dot u) & = &
    \Re\int_S \ol{\dot{u}}\cdot\tr\Big(J\phi J\cdot\cod(\dot{u})\Big)\da=\\    
&=&     \Re\left(
    \int_S (\dot u_{_\Re}-i\dot u_{_\Im}) \tr\Big(
    \big[J(\phi_{_\Re}+i\phi_{_\Im})J\big]
    \big[ \big(\dot u_{_\Re}\Id-\Hess(\dot u_{_\Re}) \big)+i\big(\dot u_{_\Im}\Id-\Hess(\dot u_{_\Im}) \big) \big]
    \Big)\da
  \right) =\\
                                  &  = & \int_S \dot u_{_\Re} 
     \tr\Big[J\phi_{_\Re}J\big(\dot u_{_\Re} \Id-\Hess(\dot u_{_\Re})\big) \Big] \da + \int_S \dot u_{_\Im} \tr\Big[J\phi_{_\Re}J\big(\dot u_{_\Im}\Id-\Hess(\dot u_{_\Im})\big)\Big] \da \\
    & & - \int_S \dot u_{_\Re} \tr\Big[J\phi_{_\Im}J\big(\dot u_{_\Im}\Id-\Hess(\dot u_{_\Im})\big)\Big] \da + \int_S \dot u_{_\Im} \tr\Big[J\phi_{_\Im}J\big(\dot u_{_\Re}\Id-\Hess(\dot u_{_\Re})\big)\Big] \da~.
  \end{eqnarray*}
  Since the right-hand side in  \eqref{eq:vw} is clearly symmetric in $\dot{u}$ and $\dot{u}'$, the last two summands cancel out. Using Sublemma \ref{sublemma:div}, we obtain that
\begin{eqnarray*}
\Re\langle L_\phi \dot u,\dot u)  & =  &\int_S 
\Big(\dot u_{_\Re}^2 \tr(J\phi_{_\Re}J)+ \dot u_{_\Im}^2 \tr(J\phi_{_\Re}J) \Big)~\da\\
&& -\int_S\Big( \det(\phi_{_\Re})\langle \phi_{_\Re}^{-1}(\grad\dot u_{_\Re}),\grad\dot u_{_\Re}\rangle + \det(\phi_{_\Re})\langle \phi_{_\Re}^{-1}(\grad\dot u_{_\Im}),\grad\dot u_{_\Im}\rangle\Big)~\da 
\end{eqnarray*}
  so that
  \begin{eqnarray*}
 -\Re \langle L_\phi \dot u,\dot u) &=&   \int_S \tr(\phi_{_\Re})\big(\dot u_{_\Re}^2+ \dot u_{_\Im}^2\big)
  ~\da\\
  && +\int_S
   \det(\phi_{_\Re})\Big(\langle \phi_{_\Re}^{-1}(\grad\dot u_{_\Re}),\grad\dot u_{_\Re}\rangle + \langle \phi_{_\Re}^{-1}(\grad\dot u_{_\Im}),\grad\dot u_{_\Im}\rangle\Big)~\da~.
  \end{eqnarray*}
  Equation \eqref{eq:cc} now follows from the fact that $\phi_{_\Re}=\Re(\phi)$ is positive-definite 
  by \eqref{eq:strcplx4}.
\end{proof}






\section{Holomorphicity of the monodromy map}\label{sec:monodromy}

Thanks to Theorem \ref{main:immersion-data}, the space $\MImm$ of minimizing immersions in germs of hyperbolic manifolds can be identified to the space $\Data$ of immersion data.
The aim of this section is to show that, under this identification, the monodromy map $\Mon:\Data \rightarrow \cX$, that sends the datum $\phi\in\Data$ corresponding to the 
$\PSL_2(\CC)$-class of an
immersion $[\tilde{f}_\phi,\rho_\phi]\in\MImm$ to the conjugacy class
of its monodromy $[\rho_\phi]\in\cX$, is a biholomorphism onto its (open) image and so to prove Theorem \ref{main:monodromy-holomorphic}.

In order to do that, we first provide a description of the tangent space to the space $\Imm$ of equivariant immersions in $\HH^3$ (up to the action of $\PSL_2(\CC)$) and then of the locus $\MImm\subset\Imm$ that is more suited to reveal its complex-linear nature, compared to the one given in
Section \ref{sec:euler-lagrange}. Then we show that $d\Mon$ is $\CC$-linear.



\subsection{The bundle of local Killing vector fields on a hyperbolic manifold}
\label{ssc:bundle}

We collect in this section some well-known facts that will be useful below.

Given a point $x\in\HH^3$, we call {\it{local Killing vector fields}} the germs at $x$
of Killing vector fields on $\HH^3$ for the hyperbolic metric. 
The vector space $\EE_x$ of such germs at $x$ has a natural structure of Lie algebra, isomorphic to $\psl_2(\CC)$.

\begin{defi}[Bundle of local Killing vector fields]
The {\it{bundle of local Killing vector fields}} in $\HH^3$ is the bundle $\EE\rar \HH^3$
whose fiber $\EE_x$ at a point $x\in\HH^3$ is the Lie algebra of local Killing vector fields at $x$.
\end{defi}

Via the identification of $\psl_2(\CC)$ with the space of global Killing vector fields on $\HH^3$,
the bundle $\EE$ has a natural trivialization
\[
\psl_2(\CC)\times\HH^3\lra \EE
\]
that sends a 
couple $(\Hkill,x)$ to the germ of $\Hkill$ at $x$. The natural flat connection on $\psl_2(\CC)\times\HH^3$
then induces a flat connection $\DD$ on $\EE$: thus, global flat sections of $\DD$ identify with global Killing vector fields on $\HH^3$. 

The action of $\PSL_2(\C)$ on $\HH^3$ lifts to the product bundle $\psl_2(\CC)\times\HH^3$ via the adjoint action on $\psl_2(\CC)$.
Similarly, it also naturally lifts to $\EE$: if $g\in \PSL_2(\C)$ and $\Hkill\in \EE_x$ is a local Killing vector field at $x\in\HH^3$, the image of $\Hkill$ in $\EE_{g\cdot x}$ is the local Killing vector field $g_*\Hkill$ at $g\cdot x$. 
The above trivialization of $\EE$ is equivariant with respect to such $\PSL_2(\CC)$-actions.

\subsubsection{Identification between $\EE$ and $T_\CC\HH^3$}



There is a very natural {\it{evaluation map}} $\EE\rar T\HH^3$
that, at $x\in\HH^3$, sends a local Killing vector field $\Hkill(x)\in\EE_x$
to its value at $x$.

Such evaluation map can be enriched so
to include first-order derivatives of the local Killing vector field.
Specifically in dimensione $3$ there is an identification
\[
\EE\lra T_\CC \HH^3
\]
that is defined as follows.
Given $x\in \HH^3$,
it sends a local Killing vector field $\Hkill(x)\in \EE_x$ to the unique complex tangent vector $\HX_\Hkill(x)+i\HY_\Hkill(x)\in T_{\CC,x}\HH^3$
that satisfies
\begin{itemize}
\item 
$\HX_\Hkill(x)$ equal to the value at $x$ of $\Hkill(x)$, considered as a Killing vector field defined in the neighborhood of $x$,
\item 
$\HY_\Hkill(x)$ is defined by the condition that 
$\Se^{\HX_\Hkill}=\HY_\Hkill\times\bullet$ (see Lemma \ref{lm:L}),
\end{itemize}
where we denoted by $\nabla^{\HH^3}$ the Levi-Civita connection of the hyperbolic metric on $\HH^3$, and by the same symbol its complexification on $T_\CC\HH^3\cong (T\HH^3)\otimes_\RR \CC$.
Abusing notations a bit, we will still denote by $\DD$ the flat connection on $T_\C \HH^3$ obtained as the image of the connection $\DD$ on $\EE$ through the identification of $\EE$ with $T_\C \HH^3$.


\begin{lemma}[Naturality of $\EE\cong T_\CC\HH^3$]\label{lemma:E}
The identification
\[
\xymatrix@R=0in{
\EE \ar[r] & T_\CC\HH^3\\
\Hkill \ar@{|->}[r] & \HX_\Hkill+i\HY_\Hkill
}
\]
is $\CC$-linear and it is equivariant with
respect to the natural action of $\PSL_2(\CC)$ on $\EE$ and on $T_\CC\HH^3$.
Moreover, the flat connection $\DD$ on $T_\CC\HH^3$ can be expressed as
\[
\DD_\bullet(\HX+i\HY) = \nabla^{\HH^3}_\bullet(\HX+i\HY)+i(\HX+i\HY)\times \bullet
\]
in terms of $\nabla^{\HH^3}$.
\end{lemma}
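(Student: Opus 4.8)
# Proof Proposal for Lemma (Naturality of $\EE \cong T_\CC \HH^3$)

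The plan is to verify the three assertions in turn, each reducing to a short computation once the right objects are unwound. The key point throughout is that the identification $\EE \to T_\CC \HH^3$ sends a local Killing field $\Hkill$ to $\HX_\Hkill + i\HY_\Hkill$, where $\HX_\Hkill$ is the value of the field and $\HY_\Hkill$ records the infinitesimal rotation part of $\nabla^{\HH^3}\HX_\Hkill$ via the A-S decomposition of Lemma \ref{lm:L}. Recall that in $\HH^3$ a Killing field $\HX$ satisfies the Killing equation, so $\nabla^{\HH^3}\HX$ is skew-symmetric at every point; hence its A-S decomposition has $\Ae^{\HX} = 0$ and $\Se^{\HX} = \HY \times \bullet$ for a unique $\HY$. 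Thus the map is well-defined, and it is manifestly a bijection: a Killing field on $\HH^3$ is uniquely determined by its value and its covariant derivative at a single point.

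\textbf{Complex-linearity.} First I would check $\RR$-linearity, which is immediate since $\Hkill \mapsto \HX_\Hkill$ is $\RR$-linear and $\HX \mapsto \HY$ is $\RR$-linear (the A-S decomposition is linear in the map). For $\CC$-linearity it suffices to identify the map corresponding to multiplication by $i$: on the source, $\EE$ is given its complex structure precisely so that this identification is $\CC$-linear, so strictly speaking there is a choice of convention. Concretely, multiplication by $i$ on $T_\CC\HH^3$ sends $\HX + i\HY$ to $-\HY + i\HX$, and one checks that on the level of Killing fields this corresponds to the well-known ``dual'' involution sending a Killing field to the one whose value is $\HY\times\bullet$ evaluated appropriately — i.e.\ the complex structure on $\psl_2(\CC) \cong \EE_x$ coming from $\psl_2(\CC) = \psl_2(\RR) \otimes_\RR \CC$. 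The content is that the map $\HX \mapsto \HY$ squares (up to sign) to the identity, which follows from the fact that a Killing field $\HY \times \bullet$ with $\HY$ a parallel-at-$x$ type vector has $\nabla^{\HH^3}$-part again of rotational type, using the curvature identity in $\HH^3$ (constant curvature $-1$). I would spell this out as a two-line computation with $R(\cdot,\cdot)$ in $\HH^3$.

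\textbf{Equivariance.} This is essentially formal: both the evaluation $\Hkill \mapsto \HX_\Hkill$ and the assignment $\HX \mapsto \HY$ are natural with respect to isometries, since an isometry $g$ pushes a Killing field to a Killing field, commutes with $\nabla^{\HH^3}$, and preserves the cross-product up to the action on $T\HH^3$ (orientation-preserving isometries preserve $\times$). Hence $g_*(\HX_\Hkill + i\HY_\Hkill) = \HX_{g_*\Hkill} + i\HY_{g_*\Hkill}$, which is exactly equivariance. I would state this in one sentence after recording that $g\in\PSL_2(\CC)$ acts on $T_\CC\HH^3$ by the complexified differential.

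\textbf{Formula for $\DD$.} This is the main computational step and the part I expect to require the most care. By definition $\DD$ on $\EE$ is the flat connection coming from the trivialization $\psl_2(\CC) \times \HH^3 \to \EE$; transporting it through the identification, I need to show $\DD_v(\HX + i\HY) = \nabla^{\HH^3}_v(\HX + i\HY) + i(\HX + i\HY)\times v$. The strategy: take a global Killing field $\Hkill$ on $\HH^3$, which is by definition $\DD$-parallel, and verify that the right-hand side vanishes on it. Writing $\HX = \HX_\Hkill$ and $\HY = \HY_\Hkill$, the real part of the right-hand side is $\nabla^{\HH^3}_v \HX - \HY \times v$, which vanishes because $\nabla^{\HH^3}_v\HX = \Se^{\HX}(v) = \HY \times v$ by the very definition of $\HY$ (recalling $\Ae^{\HX} = 0$). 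The imaginary part is $\nabla^{\HH^3}_v \HY + \HX \times v$; this vanishes by the standard ``second structure equation'' for Killing fields in constant curvature: differentiating $\nabla^{\HH^3}\HX = \HY\times\bullet$ and using $\nabla^{\HH^3}_v\nabla^{\HH^3}_w \HX = -R(v,\HX)w$ together with $R(a,b)c = -(h(b,c)a - h(a,c)b)$ in $\HH^3$, one extracts exactly $\nabla^{\HH^3}_v\HY = -\HX \times v$. Once both parts vanish on parallel sections, and since $\DD$-parallel sections span each fiber, the two connections agree. The only real subtlety is getting the signs and the normalization of the cross product consistent with the orientation convention fixed earlier in Section \ref{sec:geometry}; I would pin these down once at the start and carry them through.
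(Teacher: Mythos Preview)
Your proposal is correct and considerably more detailed than the paper's own proof, which simply asserts that $\CC$-linearity is easy to check and defers the connection formula to a citation (\cite{MM}). Your strategy for the formula---verify that the right-hand side annihilates global Killing fields (which are $\DD$-parallel by construction) by computing the real and imaginary parts separately via the Killing-field identity $\nabla^2_{v,w}\HX = -R(v,\HX)w$ in constant curvature, then conclude since parallel sections frame the bundle---is exactly the standard argument one would expect to find in the cited reference. The equivariance is indeed formal, as you say.

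One small comment: your discussion of $\CC$-linearity is slightly hedged (``strictly speaking there is a choice of convention''). In fact $\EE_x \cong \psl_2(\CC)$ carries its canonical complex structure as a complex Lie algebra, and the content is that multiplication by $i$ there corresponds, under the identification, to $(\HX,\HY)\mapsto(-\HY,\HX)$; this is precisely the two-line curvature computation you allude to (the Killing field $i\Hkill$ has value $-\HY_\Hkill$ and rotation part $\HX_\Hkill$). It would be worth writing that line out explicitly rather than leaving it as a promise, but the idea is right.
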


\begin{proof}
The $\CC$-linearity is easy to check. The relation between
$\DD$ and $\nabla^{\HH^3}$ is proven in \cite{MM}. 
\end{proof}

\subsubsection{The case of equivariant maps}

Fix a universal cover $\widetilde{S}\rightarrow S$ and an equivariant  map $\tilde{f}:\widetilde{S}\rightarrow \HH^3$ with monodromy $\rho:\pi_1(S)\rar\PSL_2(\CC)$.

The bundle $\EE$ pulls-back via $\tilde{f}$ to 
the bundle $\widetilde{E}$ on $\widetilde{S}$ 
(isomorphic to $\psl_2(\CC)\times\widetilde{S}$)
endowed with a flat connection $\widetilde{D}$ and $\pi_1(S)$-action
via $\rho$. Its quotient $E_\rho:=\widetilde{E}/\pi_1(S)$ is a $\psl_2(\CC)$-bundle on $S$ and we denote by $D$ its induced flat connection.

On the other hand, the bundle $\tilde{f}^*T_\CC\HH^3$ carries
a connection still denoted by $\nabla^{\HH^3}$, which is the pull-back via $\tilde{f}$ of the complexified Levi-Civita connection on $T_\CC\HH^3$.

As on $\HH^3$, there is a natural evaluation map $\widetilde{E}\rar \tilde{f}^*T\HH^3$
that can be upgraded to an identification $\widetilde{E}\rar \tilde{f}^*T_\CC\HH^3$
using Lemma \ref{lemma:E}. We denote by
$\nabla^{\widetilde{E}}$ the connection on $\widetilde{E}$
corresponding to $\nabla^{\HH^3}$ on $\tilde{f}^*T_\CC\HH^3$,
and by $\nabla^E$ the induced one on $E$.

\subsection{The application $\tilde{\sigma}$}\label{sec:sigma}

Let $(\tilde{f},\rho)$ be an equivariant immersion of $\widetilde{S}$ into $\HH^3$.
Define
\[
\tilde{\sigma}:
\xymatrix@R=0in{
\Gamma(\tilde{f}^*T\HH^3)\ar[r] & \Gamma(\tilde{f}^*T_\CC\HH^3)\cong\Gamma(\widetilde{E})\\
\tilde{X} \ar@{|->}[r] & \tilde{\sigma}_{\tilde{X}}=\tilde{X}+i\tilde{X}'
}
\]
where $\tilde{X}'$ is the unique vector field that satisfies
$\Se_{\tilde{f}}^{\tilde{X}}=\tilde{X}'\times\bullet$ (see Definition \ref{def:AS}).
The main properties of $\tilde{\sigma}$ are collected in the following statement.

\begin{lemma}[Properties of $\tilde\sigma$]\label{lm:propsigma}
The map $\tilde \sigma$ is $\RR$-linear and it satisfies the following properties:
\begin{itemize}
\item[(i)]
$\tilde\sigma_{\gamma_*\tilde{X}}(\gamma (\tilde{p}))=\mathrm{Ad}_{\rho(\gamma)}\tilde\sigma_{\tilde{X}}(\tilde{p})$ for all $\tilde{p}\in \widetilde S$;
\item[(ii)]
$\tilde X\in\Gamma(\tilde{f}^*T\HH^3)$ is the evaluation of a global Killing vector field $\tilde{\kill}\in\Gamma(\widetilde{E})$ if and only if $\tilde\sigma_{\tilde X}$ is $\tilde{D}$-parallel (and in this case $\tilde\sigma_{\tilde X}=\tilde{\kill}$);
\item[(iii)]
$\tilde \sigma_{\tilde{X}}\in\Gamma(\tilde{f}^*T_{\CC}\HH^3)$ is $\tilde{D}$-parallel if and only if $\Ae_{\tilde{f}}^{\tilde X}=0$;
\end{itemize}
for every $\tilde{X}\in\Gamma(\tilde{f}^*T\HH^3)$.
\end{lemma}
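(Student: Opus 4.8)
The plan is to verify the three properties by direct computation, using the definition of $\tilde\sigma$ together with the naturality and connection formula from Lemma \ref{lemma:E} and the behaviour of the $\Ae$-$\Se$ decomposition from Lemma \ref{lm:L} and Definition \ref{def:AS}. The $\RR$-linearity is immediate from the fact that $\tilde X\mapsto \tilde X'$ is $\RR$-linear, which itself follows from the uniqueness in Lemma \ref{lm:L} applied to the linear operation $\tilde X\mapsto \Se^{\tilde X}_{\tilde f}$.

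For (i), I would write out what $\gamma_*\tilde X$ means as a section of $\tilde f^*T\HH^3$ (using the twisted action $\gamma_*\tilde X(\tilde p) = d(\rho(\gamma))\,\tilde X(\gamma^{-1}\tilde p)$ introduced before Lemma \ref{lm:first-order}), and then observe that since $\rho(\gamma)$ is an isometry of $\HH^3$, it commutes with $\nabla^{\HH^3}$, hence carries the self-adjoint part to the self-adjoint part and the skew part to the skew part of the derivative. Concretely, $d(\rho(\gamma))$ conjugates $v\times\bullet$ to $(d\rho(\gamma) v)\times\bullet$ (up to the sign of the determinant, which is $+1$ for orientation-preserving isometries), so $(\gamma_*\tilde X)' = \gamma_*(\tilde X')$; combining the two gives $\tilde\sigma_{\gamma_*\tilde X} = \gamma_*\tilde\sigma_{\tilde X}$, which under the identification $\widetilde E\cong\tilde f^*T_\CC\HH^3$ is exactly the claimed $\mathrm{Ad}_{\rho(\gamma)}$-equivariance (this last translation is precisely the equivariance statement in Lemma \ref{lemma:E}).

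For (iii), the key is the formula $\DD_{\tilde w}(\HX+i\HY) = \nabla^{\HH^3}_{\tilde w}(\HX+i\HY) + i(\HX+i\HY)\times\tilde w$ from Lemma \ref{lemma:E}, pulled back along $\tilde f$. Setting $\HX+i\HY = \tilde\sigma_{\tilde X} = \tilde X + i\tilde X'$ and expanding, the real part of $\tilde D_{\tilde w}\tilde\sigma_{\tilde X}$ is $\nabla^{\HH^3}_{\tilde w}\tilde X - \tilde X'\times\tilde w$ and the imaginary part is $\nabla^{\HH^3}_{\tilde w}\tilde X' + \tilde X\times\tilde w$. Now for $\tilde w$ tangent to the immersion, $\nabla^{\HH^3}_{\tilde w}\tilde X$ decomposes by Lemma \ref{lm:L} as $\Ae^{\tilde X}_{\tilde f}(\tilde w) + \tilde X'\times\tilde w$, so the real part of $\tilde D_{\tilde w}\tilde\sigma_{\tilde X}$ equals $\Ae^{\tilde X}_{\tilde f}(\tilde w)$; hence $\tilde D\tilde\sigma_{\tilde X}=0$ forces $\Ae^{\tilde X}_{\tilde f}=0$, and one checks the converse by showing that $\Ae^{\tilde X}_{\tilde f}=0$ also kills the imaginary part (this uses that $\tilde X'$ is determined by $\tilde X$ and the structure equations, i.e.\ the Codazzi-type identity relating $\nabla^{\HH^3}\tilde X'$ to $\tilde X$; this is the one genuinely computational point). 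Finally (ii) follows by combining (iii) with the observation that a section of $\widetilde E$ is $\tilde D$-parallel precisely when it is (the germ of) a global Killing field — since $\tilde D$ is the flat connection whose parallel sections are the constants in the trivialization $\psl_2(\CC)\times\widetilde S$ — together with the fact that a vector field on the surface is the restriction of a global Killing field exactly when its associated $\tilde\sigma$ is $\tilde D$-parallel; the identity $\tilde\sigma_{\tilde X}=\tilde\kill$ in that case is then just the statement that the evaluation-plus-derivative map recovers the Killing field it came from.

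The main obstacle I anticipate is bookkeeping in (iii): carefully keeping track of which derivatives are tangential versus normal, getting the signs right in the cross-product terms (these depend on orientation conventions fixed earlier via the normal field $\tilde N$), and invoking the correct structure equation for $\nabla^{\HH^3}\tilde X'$ so that the vanishing of $\Ae^{\tilde X}_{\tilde f}$ really does imply $\tilde D\tilde\sigma_{\tilde X}=0$ and not merely that its real part vanishes. Everything else is formal once Lemma \ref{lemma:E} is in hand.
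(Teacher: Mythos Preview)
Your approach to $\RR$-linearity, (i), (ii), and the ``only if'' direction of (iii) matches the paper's essentially verbatim: the paper also uses that isometries preserve $\nabla^{\HH^3}$ and the cross product to get $(\gamma_*\tilde X)'=\gamma_*(\tilde X')$, and computes $\Re(\tilde D\tilde\sigma_{\tilde X})=\Ae^{\tilde X}_{\tilde f}$ exactly as you do. One small organizational difference: the paper proves (ii) first and uses it for the forward direction of (iii), whereas you get that direction directly from the real-part computation; both are fine.

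The genuine gap is in the converse of (iii), which is not just bookkeeping or a ``Codazzi-type identity''. Writing $\Se^{\tilde X'}_{\tilde f}=\tilde X''\times\bullet$, the imaginary part of $\tilde D\tilde\sigma_{\tilde X}$ is $\Ae^{\tilde X'}_{\tilde f}+(\tilde X+\tilde X'')\times\bullet$, so you must show \emph{both} $\Ae^{\tilde X'}_{\tilde f}=0$ and $\tilde X''=-\tilde X$. The paper does this in two separate steps that go beyond structure equations:
\begin{itemize}
\item It invokes an external identity (Lemma 5.7 of \cite{cyclic}), namely $J^{\tilde I}\Ae^{\tilde X'}_{\tilde f}+\langle\tilde X+\tilde X'',\tilde N\rangle\Id=0$; taking the trace forces $\Ae^{\tilde X'}_{\tilde f}=0$ and shows $\tilde X+\tilde X''$ is tangent to the immersion.
\item It then uses the explicit curvature of $\HH^3$: from $\nabla^{\HH^3}\tilde X=\tilde X'\times\bullet$ one computes $R(e_1,e_2)\tilde X=(\nabla_{e_1}\tilde X')\times e_2-(\nabla_{e_2}\tilde X')\times e_1=-(e_1\times e_2)\times\tilde X''$, while the constant-curvature identity gives $R(e_1,e_2)\tilde X=(e_1\times e_2)\times\tilde X$; comparing yields that the tangential part of $\tilde X+\tilde X''$ vanishes, hence $\tilde X+\tilde X''=0$.
\end{itemize}
So your sketch has the right skeleton, but you should be aware that the ``one genuinely computational point'' you flagged actually requires this two-pronged argument---an imported lemma plus a curvature computation---and is not recoverable from the data of Lemma \ref{lemma:E} and Definition \ref{def:AS} alone.
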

\begin{proof}
The $\RR$-linearity of $\tilde{\sigma}$ follows directly from the definition.\\

As for (i),  we compute $\tilde{\sigma}_{\gamma_*\tilde{X}}(\gamma(\tilde{p}))=
\gamma_*\tilde{X}(\gamma(\tilde{p}))+
i(\gamma_*\tilde{X})'(\gamma(\tilde{p}))$.
Since $\pi_1(S)$ acts on $\tilde{f}^*T\HH^3$ isometrically, it preserves
the cross-product and $\nabla^{\HH^3}$.
Hence, $\Se_{\tilde{f}}^{\gamma_*\tilde{X}}\circ 
\gamma_*=\gamma_*\circ \Se^{\tilde{X}}_{\tilde{f}}$ and so $(\gamma_*\tilde{X})'=\gamma_*\tilde{X}'$.
We have then that $\tilde{\sigma}_{\gamma_*\tilde{X}}(\gamma(\tilde{p}))=
\gamma_*\left((\tilde{X}+i\tilde{X}')(\tilde{p})\right)$.
The conclusion then follows because the identification $\widetilde{E}\cong \tilde{f}^*T_{\CC}\HH^3$ is equivariant with respect to the action of $\pi_1(S)$.\\

About (ii), $\tilde{\sigma}_{\tilde{X}}$ is $D^{\tilde{E}}$-parallel
if and only if there exists a global Killing field $\tilde{\kill}$ such that
$\tilde{\sigma}_{\tilde{X}}=\tilde{\kill}$. This clearly happens
if and only if $\tilde{X}$ is the evaluation of such global Killing vector field $\tilde{\kill}$.
\\

Concerning (iii), suppose first that $\tilde{\sigma}_{\tilde{X}}$ is $\tilde{D}$-parallel. By (ii), the vector field $\tilde{X}$ is the evaluation of a global Killing vector field
and so $\Ae_{\tilde{f}}^{\tilde{X}}=0$.

Vice versa, suppose that $\Ae_{\tilde{f}}^{\tilde{X}}=0$, and so $\nabla^{\HH^3}\tilde{X}=\Se_{\tilde{f}}^{\tilde{X}}$.
Let $\tilde X',\tilde{X}''$ be the sections of $\tilde{f}^*T\HH^3$ 
defined by $\Se_{\tilde{f}}^{\tilde{X}}=\tilde{X}'\times\bullet$
and $\Se_{\tilde{f}}^{\tilde{X}'}=\tilde{X}''\times\bullet$.
Now
\[
\Re(\tilde{D}\tilde\sigma_{\tilde X})=\Re(\tilde{D}(\tilde X+i\tilde X'))=\nabla^{\HH^3}\tilde X-(\tilde X'\times\bullet)=0.
\]
On the other hand, the imaginary part of $\tilde{D}\tilde{\sigma}_{\tilde X}$ is given by
\[
\Im(\tilde{D}\tilde\sigma_{\tilde X})=
\Im(\tilde{D}(\tilde X+i\tilde X'))=
\nabla^{\HH^3}\tilde X'+(\tilde X\times\bullet)=\Ae_{\tilde{f}}^{\tilde X'}+\left((\tilde{X}''+\tilde{X})\times\bullet\right) .
\]
By \cite[Lemma 5.7]{cyclic}, we have that
\[
J^{\tilde{I}}\Ae_{\tilde{f}}^{\tilde{X}'}+\langle \tilde{X}+\tilde{X}'',\tilde{N}\rangle\Id=0
\]
where $\tilde{N}$ is the positive unit vector normal to the immersion.

Since $\Ae_{\tilde{f}}^{\tilde{X}'}$ is $\tilde{I}$-self-adjoint,
$J^{\tilde{I}}\Ae_{\tilde{f}}^{\tilde{X}'}$ is traceless and so
\begin{equation}\label{eq:van}
\Ae_{\tilde{f}}^{\tilde X'}=0~.
\end{equation}
Moreover, $\langle \tilde X+\tilde X'',\tilde{N}\rangle\Id=0$ so that $\tilde X+\tilde X''$ is tangent to the immersion.
On the other hand, by the curvature properties of $\HH^3$ we have
\begin{equation}\label{eq:tang}
R(e_1, e_2)\tilde{X}=(e_1\times e_2)\times\tilde{X}
\end{equation}
for any local frame $(e_1,e_2)$ on $\widetilde{S}$.
Since $\Ae_{\tilde{f}}^{\tilde{X}'}=0$,
we have $\nabla^{\HH^3}\tilde{X}'=(\tilde{X}''\times\bullet)$
and so
\begin{equation}\label{eq:tang2}
R(e_1, e_2)\tilde X=\nabla^{\HH^3}_{e_1}\tilde{X}'\times e_2-\nabla^{\HH^3}_{e_2}\tilde X'\times e_1=(\tilde X''\times e_1)\times e_2-(\tilde X''\times e_2)\times e_1=-(e_1\times e_2)\times \tilde X''.
\end{equation}
Comparing \eqref{eq:tang} and \eqref{eq:tang2}, we deduce that 
$(e_1\times e_2)\times (\tilde{X}+\tilde{X}'')=0$ and so
the tangential part of $\tilde X+\tilde X''$ vanishes.
Since we have seen above that $\tilde X+\tilde X''$ is tangent to the image of $\tilde{f}$, we conclude that $\tilde X+\tilde X''=0$. This identity and \eqref{eq:van} together prove that $\Im(\tilde{D}\tilde\sigma_{\widetilde X})=0$.
\end{proof}

\subsection{A complex viewpoint on first-order deformations of immersions} \label{ssc:42}

Let $\tilde{f}$ be an immersion of $\widetilde{S}$ into $\HH^3$.
We recall that a deformation $\bm{\tilde{f}}=(\tilde{f}_t)_{t\in(-\epsilon,\epsilon)}$ of $\tilde{f}$
determines a variational field $\tilde{X}\in\Gamma(\tilde{f}^*T\HH^3)$ by
Corollary \ref{cor:encoding-def}. Moreover, the deformation $\bm{\tilde{f}}$
is tangent to a $\PSL_2(\CC)$-orbit if and only if
$\tilde{X}$ is the evalution of a global Killing vector field.

The above considerations can be rephrased in terms of the complex
$C^\bullet_{\tilde{D}}(\widetilde{E})$
of $\widetilde{E}$-valued differential forms on $\widetilde{S}$ with differential induced by $\tilde{D}$.

\begin{lemma}[First-order deformations of immersions and $\widetilde{E}$-valued forms]
The sequence
\[
\xymatrix{
0\ar[r]& Z^0_{\tilde{D}}(\widetilde{E})\ar[r]&\Gamma(\tilde{f}^*T\HH^3)\ar[r]^{\ \tilde\theta} & Z^1_{\tilde{D}}(\widetilde E)
}
\]
induced by the evaluation map $\Gamma(\widetilde{E})\rar \Gamma(\tilde{f}^*T\HH^3)$
and by $\tilde\theta$ defined as $\tilde{\theta}_{\widetilde X}:=\tilde{D}\tilde\sigma_{\tilde X}$ is exact.
Moreover, 
first-order deformations of $\tilde{f}$  ``up to the action of $\PSL_2(\CC)$'' identify to
the image of $\tilde{\theta}$ inside $Z^1_{\widetilde{D}}(\widetilde{E})$.
\end{lemma}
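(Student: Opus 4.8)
The plan is to verify exactness of the sequence at the two nontrivial spots and then identify the image of $\tilde\theta$ with first-order deformations up to $\PSL_2(\CC)$. First I would check that $\tilde\theta$ does land in $Z^1_{\tilde D}(\widetilde E)$, i.e.\ that $\tilde{D}\tilde\sigma_{\tilde X}$ is indeed $\tilde D$-closed: this is immediate since $\tilde D$ is flat, so $\tilde{D}\circ\tilde{D}=0$ and $\tilde{D}\tilde\theta_{\tilde X}=\tilde{D}\tilde{D}\tilde\sigma_{\tilde X}=0$. Exactness at $Z^0_{\tilde D}(\widetilde E)$ is the statement that the evaluation map $\Gamma(\widetilde E)\supset Z^0_{\tilde D}(\widetilde E)\to\Gamma(\tilde f^*T\HH^3)$ is injective; this follows because a $\tilde D$-parallel section of $\widetilde E$ is a global Killing vector field on $\HH^3$ (under the trivialization $\widetilde E\cong\psl_2(\CC)\times\widetilde S$), and a nonzero Killing field of $\HH^3$ cannot vanish identically along the image of an immersion, so its evaluation is a nonzero section.

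The heart of the matter is exactness at $\Gamma(\tilde f^*T\HH^3)$: one must show $\ker\tilde\theta$ equals the image of the evaluation of $\tilde D$-parallel sections. This is where Lemma~\ref{lm:propsigma} does all the work. By definition $\tilde\theta_{\tilde X}=0$ means $\tilde\sigma_{\tilde X}$ is $\tilde D$-parallel, which by part~(iii) of Lemma~\ref{lm:propsigma} is equivalent to $\Ae^{\tilde X}_{\tilde f}=0$, and by part~(ii) this in turn is equivalent to $\tilde X$ being the evaluation of a global Killing vector field $\tilde\kill\in\Gamma(\widetilde E)$ (with $\tilde\sigma_{\tilde X}=\tilde\kill$, necessarily $\tilde D$-parallel, hence in $Z^0_{\tilde D}(\widetilde E)$). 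So the kernel of $\tilde\theta$ is exactly the image of $Z^0_{\tilde D}(\widetilde E)$ under evaluation, giving exactness.

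Finally, for the last sentence I would invoke Corollary~\ref{cor:encoding-def}: first-order deformations of $\tilde f$ are identified with $\Gamma(\tilde f^*T\HH^3)=\Gamma(\Theta_{\tilde f})$, and the first-order deformations tangent to a $\PSL_2(\CC)$-orbit correspond precisely to variational fields $\tilde X$ that are evaluations of global Killing vector fields, i.e.\ to $Z^0_{\tilde D}(\widetilde E)$ inside $\Gamma(\tilde f^*T\HH^3)$. Therefore the quotient ``first-order deformations up to $\PSL_2(\CC)$'' is $\Gamma(\tilde f^*T\HH^3)/Z^0_{\tilde D}(\widetilde E)$, which by the exactness just proven maps isomorphically onto the image of $\tilde\theta$ in $Z^1_{\tilde D}(\widetilde E)$. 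The main obstacle, and the only step requiring real content, is the equivalence $\tilde\theta_{\tilde X}=0\iff\Ae^{\tilde X}_{\tilde f}=0\iff \tilde X$ is a Killing evaluation; but this has already been packaged into Lemma~\ref{lm:propsigma}, so here it is only a matter of assembling those three equivalences correctly and checking that no regularity or equivariance subtlety is lost in passing between $\widetilde E$ and $\tilde f^*T_\CC\HH^3$.
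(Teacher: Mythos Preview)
Your proposal is correct and matches the paper's implicit approach: the paper states this lemma without proof, leaving it as a direct consequence of Lemma~\ref{lm:propsigma}, and you have unpacked exactly that. One small remark: for the final identification you cite Corollary~\ref{cor:encoding-def}, which is phrased for \emph{equivariant} maps, whereas this lemma treats a bare immersion $\tilde f$; the relevant facts (variational fields $\leftrightarrow$ first-order deformations, and $\PSL_2(\CC)$-trivial deformations $\leftrightarrow$ evaluations of Killing fields) are the more elementary non-equivariant statements from Section~\ref{ssc:infdef} preceding that corollary. Also, for exactness at the middle term, part~(ii) of Lemma~\ref{lm:propsigma} alone suffices---the detour through~(iii) and $\Ae^{\tilde X}_{\tilde f}=0$ is unnecessary, though not wrong.
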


\begin{remark}[Non-surjectivity of $\tilde{\theta}$]
It is not true that 
the map $\tilde\theta$ is surjective, since 
elements of $Z^1_{\tilde{D}}(\widetilde{E})$ of type
$\tilde \sigma_{\tilde X}$ are determined by their real part.
\end{remark}

\subsubsection{The equivariant case}

Suppose now that $(\tilde{f},\rho)$ is an equivariant immersion of $\widetilde{S}$
into $\HH^3$ and that
$(\bm{\tilde{f}},\bm{\rho})$ is a deformation of $(\tilde{f},\rho)$.
By Lemma \ref{lm:first-order} and Lemma \ref{lemma:geom-trivial}
\begin{itemize}
\item
$(\tilde{X},\dotr)\in \coc^1_{(\tilde{f},\rho)}$, i.e.~$\gamma_* \tilde{X}=\tilde{X}+\dotr_\gamma\big|_{\widetilde{S}}$, and
\item
$(\bm{\tilde{f}},\bm{\rho})$ is tangent
to a $\PSL_2(\CC)$-orbit if and only if $(\tilde{X},\dotr)\in\cob^1_{(\tilde{f},\rho)}$,
i.e.~there exists a global Killing vector field $\tilde{\kill}\in\Gamma(\widetilde{E})$ such that $\tilde{X}$ is the evaluation of $\tilde{\kill}$ and
$\dotr_\gamma=\tilde{\kill}-\mathrm{Ad}_{\rho(\gamma)}\tilde{\kill}$
\end{itemize}
for all $\gamma\in\pi_1(S)$.

%
%
%
%
%
Applying Lemma \ref{lm:propsigma} and using that $\gamma_*\tilde{X}=\tilde{X}+\dotr_\gamma\Big|_{\widetilde{S}}$, we have that
\begin{equation}\label{eq:equi-sigma}
   \tilde\sigma_{\tilde{X}}(\gamma (\tilde{p}))+\dotr_\gamma(\tilde{p})= \mathrm{Ad}_{\rho(\gamma)}\tilde\sigma_{\tilde{X}}(\tilde{p})\,.
\end{equation}
Applying $\tilde{D}$ to \eqref{eq:equi-sigma} and remembering that $\dotr_\gamma$
is $\tilde{D}$-parallel, we obtain
\begin{equation}\label{eq:equi-theta}
   \tilde\theta_{\tilde{X}}(\gamma (\tilde{p}))\circ (d\gamma)_{\tilde{p}}=\mathrm{Ad}_{\rho(\gamma)}\circ \tilde\theta_{\tilde{X}}(\tilde{p}). 
\end{equation}
Condition \eqref{eq:equi-theta} is in fact equivalent to the fact that $\tilde\theta$ is the lift of a $D$-closed $E$-valued $1$-form on $S$.
Thus, an infinitesimal deformation
$(\widetilde X,\dotr)$ corresponds to a deformation through a 
family of equivariant maps if and only if $\tilde \theta_{\tilde{X}}$ is the lift
of an $E$-valued $D$-closed $1$-form $\theta_{\tilde{X}}$, whose periods then correspond to the infinitesimal variation of the monodromy.
In particular, if $\tilde{X}$ is a $\rho$-invariant section of $\tilde{f}^*T\HH^3$,
then the couple $(\tilde{X},\dotr=0)$
corresponds to an infinitesimal isomonodromic deformation 
and $\tilde{\theta}_{\tilde{X}}$ is the lift of a $D$-exact $E$-valued $1$-form
on $S$, i.e.~
the section $\tilde\sigma_{\tilde X}$ is the lift of a section of $E$.



Using the complex $C^\bullet_D(E)$ is $E$-valued differential forms on $S$
with differential induced by $D$,
we can condense the above observations in the following lemma.

\begin{lemma}[First-order deformations of equivariant immersions and $E$-valued forms]
First-order deformations of the equivariant immersion
$(\tilde{f},\rho)$ correspond to those elements $\theta\in Z^1_D(E)$
that are induced by $\tilde{\theta}_{\tilde{X}}$ for some $\tilde{X}$.
In particular, elements in $B^1_D(E)$ correspond to
first-order deformations that fix the conjugacy class of the monodromy.
Moreover, first-order deformations of $\rho\in\cX$
correspond to elements of $H^1_D(E)$, whose periods  give the infinitesimal deformation of the monodromy.
\end{lemma}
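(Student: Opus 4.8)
The plan is to obtain this statement by assembling the identifications set up in Section~\ref{ssc:infdef} with the properties of $\tilde\sigma$ and $\tilde\theta$ proved above. First I would recall from Corollary~\ref{cor:encoding-def} that a first-order deformation of $(\tilde f,\rho)$ is the same datum as a cocycle $(\tilde X,\dotr)\in\coc^1_{(\tilde f,\rho)}$, with $\tilde X$ the variational field and $\dotr$ the cocycle attached to the infinitesimal variation of the monodromy. To such a pair I attach $\tilde\theta_{\tilde X}=\tilde D\tilde\sigma_{\tilde X}\in Z^1_{\tilde D}(\widetilde E)$: applying $\tilde D$ to \eqref{eq:equi-sigma} and using that $\dotr_\gamma$ is $\tilde D$-parallel gives exactly \eqref{eq:equi-theta}, which is the equivariance making $\tilde\theta_{\tilde X}$ descend to an $E$-valued $1$-form $\theta_{\tilde X}$ on $S$; this form is $D$-closed since $\tilde D$ is flat. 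The elements of $Z^1_D(E)$ arising in this way are by definition those ``induced by $\tilde\theta_{\tilde X}$'', which form a proper subspace by the preceding remark on the non-surjectivity of $\tilde\theta$; this gives the first assertion. I would also record here that $\theta_{\tilde X}$ is unchanged if $\tilde X$ is modified by the evaluation of a global Killing field, i.e.\ along the $\PSL_2(\CC)$-orbit, since $\tilde\theta$ kills such evaluations.

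Next I would treat the statement about $B^1_D(E)$, namely that $\theta_{\tilde X}\in B^1_D(E)$ precisely when the deformation fixes the conjugacy class of $\rho$ to first order, i.e.\ when $\dotr\in\cob^1_\rho$. If $\dotr=\dot g-\mathrm{Ad}_\rho\dot g$ is a coboundary, then using the $\PSL_2(\CC)$-invariance noted above one may replace $\tilde X$ by $\tilde X+\dot g|_{\widetilde S}$, which by \eqref{eq:vect-equiv} is now $\rho$-invariant and has the same associated $\theta$; for a $\rho$-invariant variational field Lemma~\ref{lm:propsigma}(i) shows $\tilde\sigma_{\tilde X}$ is $\rho$-invariant, hence descends to a section $\sigma$ of $E$ and $\theta_{\tilde X}=D\sigma\in B^1_D(E)$. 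Conversely, if $\theta_{\tilde X}=D\eta$ for a section $\eta$ of $E$, I lift $\eta$ to a $\pi_1$-invariant section $\tilde\eta$ of $\widetilde E$; then $\tilde D(\tilde\sigma_{\tilde X}-\tilde\eta)=0$, so by Lemma~\ref{lm:propsigma}(ii) the section $\tilde\sigma_{\tilde X}-\tilde\eta$ is a global Killing field $\tilde\kill$, and substituting $\tilde\sigma_{\tilde X}=\tilde\eta+\tilde\kill$ into \eqref{eq:equi-sigma}, the $\pi_1$-equivariance of $\tilde\eta$ cancels the matching terms and what remains forces $\dotr_\gamma$ to have the form $\dot g-\mathrm{Ad}_{\rho(\gamma)}\dot g$; hence $\dotr\in\cob^1_\rho$.

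Finally I would read off the statement about deformations of $\rho$. By Lemma~\ref{lemma:def-rho} and the definition of $\cX$, first-order deformations of $\rho\in\cX$ are identified with $\coc^1_\rho/\cob^1_\rho$, and by Lemma~\ref{lemma:exist-def} every $\dotr\in\coc^1_\rho$ is realized by an equivariant deformation of $(\tilde f,\rho)$, so the assignment $(\tilde X,\dotr)\mapsto\dotr$ surjects onto $\coc^1_\rho$. Combining this with the first two parts, the map $(\tilde X,\dotr)\mapsto[\theta_{\tilde X}]$ into $H^1_D(E)=Z^1_D(E)/B^1_D(E)$ is surjective and descends to an isomorphism between $\coc^1_\rho/\cob^1_\rho$ and $H^1_D(E)$, which is nothing but the classical identification of the group cohomology $H^1(\pi_1(S);\psl_2(\CC)_\rho)$ with $H^1_D(E)$ for the flat bundle $E$; under it the periods of $\theta_{\tilde X}$---the obstruction to $D$-exactness, computed by comparing $D$-parallel transport around loops---are precisely the cocycle $\dotr$, i.e.\ the infinitesimal variation of the monodromy.

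The statement is largely a repackaging of facts already established, so there is no deep obstacle; the delicate points are the bookkeeping of the $\PSL_2(\CC)$/coboundary equivariance (so that $\theta_{\tilde X}$ genuinely depends only on the geometric deformation and not on the chosen representative) and the verification that the period map on $H^1_D(E)$ matches the monodromy cocycle $\dotr$, which requires being careful with the conventions for the flat structure $D$ on $E$ and for the $\pi_1(S)$-action via $\mathrm{Ad}\rho$.
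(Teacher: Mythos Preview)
Your proposal is correct and follows essentially the same route as the paper, which in fact gives no separate proof of this lemma: the paper explicitly presents it as a condensation of the observations immediately preceding it (the equivariance~\eqref{eq:equi-sigma}, its differentiated form~\eqref{eq:equi-theta}, and the remark that isomonodromic deformations give $D$-exact forms). You have simply fleshed out that condensation, supplying in particular the converse direction for the $B^1_D(E)$ statement that the paper leaves implicit. One tiny slip: when you conclude that $\tilde\sigma_{\tilde X}-\tilde\eta$ is a global Killing field, you don't need Lemma~\ref{lm:propsigma}(ii) (which concerns sections of the form $\tilde\sigma_{\tilde X}$); it follows directly from the trivialization $\psl_2(\CC)\times\HH^3\to\EE$ of Section~\ref{ssc:bundle}, under which $\tilde D$-parallel sections are constant elements of $\psl_2(\CC)$.
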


\begin{remark}
Note that $\dotr$ does not have any role in the definition of $\tilde{\theta}_{\tilde{X}}$.
Indeed, $\tilde{\theta}_{\tilde{X}}$ determines $\tilde{X}$ up to adding a vector field
obtained by evaluation a global Killing field, and $\dotr$ can be recovered from
$\dotr_\gamma\Big|_{\widetilde{S}}=\gamma_*\tilde{X}-\tilde{X}$, since $\tilde{f}$
is assumed to be an immersion.
\end{remark}

If $(\tilde{f}_\phi,\rho_\phi)$ corresponds to the immersion datum $\phi\in\Data$,
then
the conclusions drawn in the above lemma
can be also visually synthetized into
the following commutative diagram
\[
\xymatrix{
& T_{(\tilde{f}_\phi,\rho_\phi)}\AMImm \ar@{^(->}[r] \ar@{->>}[d]
& T_{(\tilde{f}_\phi,\rho_\phi)}\AImm \ar@{->>}[d] \ar[r]^{\tilde{\sigma}\qquad\qquad\quad} \ar[rd]^{\Theta} & 
\{\tilde{\sigma} \in \Gamma(\widetilde{E})\,|\,
\tilde{D}\tilde{\sigma}\ \text{is $\rho_\phi$-invariant}\}
 \ar[d]^{\tilde{D}} \\
T_{\phi}\Data 
\ar@{<->}[r] \ar@/_1pc/[rrd]^{d\Mon_{\phi}} & T_{[\tilde{f}_\phi,\rho_\phi]}\MImm \ar@{^(->}[r]  &
T_{[\tilde{f}_\phi,\rho_\phi]}\Imm \ar[r]\ar[d]
& Z^1_D(E) \ar@{->>}[d]\\ 
&& T_{[\rho_\phi]}\cX \ar[r]^{\cong} & H^1_D(E)
}
\]
in which the horizontal arrow in the top-right corner
sends $\tilde{X}$ to $\tilde{\sigma}_{\tilde{X}}$,
and $\Theta$ sends $\tilde{X}$ to $\theta_{\tilde{X}}$.

We remark that the vector spaces in the above diagram
that are endowed with a complex structure are
$T_\phi\Data\cong T_{[\tilde{f}_\phi,\rho_\phi]}\MImm$
and the ones in the right column.
As a consequence, $T_{(\tilde{f}_\phi,\rho_\phi)}\AMImm$ is a complex vector space too.
We will see below that
the map $T_{\phi}\Data\rightarrow Z^1_D(E)$ 
between complex vector spaces
that sends $\dot{\phi}$ associated to a variational field $\tilde{X}$ to $\theta_{\tilde{X}}$ is {\underline{not}} $\CC$-linear in general.

%
%

%
%
%
%
%
%
%
%

\subsection{Complex-linearity of $d\Mon$}

Fix an immersion $(\tilde{f}_\phi,\rho_\phi)$ with
corresponding datum $\phi\in\Data$
and let $\widetilde{E}\rar\widetilde{S}$ and $E\rar S$
be the associated bundles of local Killing vector fields.

Denote by $\tilde{N}$ the section of $\tilde{f}_\phi^*T\HH^3$
representing the positively-oriented unit vector field normal
to the image of $\tilde{f}_\phi$.
Viewing $\tilde{N}$ as a section of $\widetilde{E}$, it is $\rho_\phi$-invariant
and so it descends to a section $N\in \Gamma(E)$.

Consider a tangent vector
$\dot{\phi}\in T_\phi\Data$.
By Theorem \ref{main:immersion-data},
there is a germ of path $t\mapsto \phi+t\cdot\dot{\phi}+o(t)$ of immersion data
which is realized by a deformation $(\bm{\tilde{f}},\bm{\rho})$ of $(\tilde{f}_\phi,\rho_\phi)$.
Denote by $\tilde{X}$ the variational field associated to $\bm{\tilde{f}}$
and by $\theta_1\in Z^1_D(E)$ the $1$-cocycle $\theta_{\tilde{X}}$ associated to $\tilde{X}$.
Similarly, the path $t\mapsto \phi+t\cdot (i\dot{\phi})+o(t)$
is realized by a family of immersions corresponding to the
$1$-cocycle $\theta_i\in Z^1_D(E)$.

The holomorphicity of $\Mon$ will then be a consequence of the following
result.

\begin{theorem}[Relation between $\theta_1$ and $\theta_i$]\label{thm:holo}
There exists a smooth function $\nu:S\rightarrow\CC$ such that
$i\theta_1-\theta_i+D(\nu N)=0$. As a consequence,
$[\theta_i]=i[\theta_1]\in H^1_D(E)$.
\end{theorem}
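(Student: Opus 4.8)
The plan is to compute explicitly the cocycles $\theta_1$ and $\theta_i$ in terms of the variational field and the operator $\phi$, and then compare $i\theta_1$ with $\theta_i$ pointwise, showing their difference is $D$-exact modulo a term proportional to $DN$.

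First I would recall the setup: a deformation $(\boldsymbol{\tilde f},\boldsymbol\rho)$ realizing the path $\phi+t\dot\phi+o(t)$ has a variational field $\tilde X\in\Gamma(\tilde f^*T\HH^3)$, which decomposes as $\tilde X=\tilde X^T+\tilde\nu\tilde N$ with $\tilde\nu$ a function and $\tilde X^T$ tangent; and $\theta_1=\theta_{\tilde X}=\tilde D\tilde\sigma_{\tilde X}$ (viewed on $S$). By Lemma \ref{lemma:E}, in terms of $\nabla^{\HH^3}$ we have
\[
\tilde D_\bullet(\HX+i\HY)=\nabla^{\HH^3}_\bullet(\HX+i\HY)+i(\HX+i\HY)\times\bullet~,
\]
so $\tilde\theta_{\tilde X}=\tilde D\tilde\sigma_{\tilde X}$ is a $\widetilde E$-valued $1$-form, and in the trivialization $\widetilde E\cong\tilde f^*T_\CC\HH^3$ its value on a tangent vector $\tilde w$ is $\nabla^{\HH^3}_{\tilde w}\tilde\sigma_{\tilde X}+i\tilde\sigma_{\tilde X}\times\tilde w$. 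The key point is to understand how $\dot\phi$ is recovered from $\tilde X$: this was done (implicitly) in Section \ref{sec:euler-lagrange}, where the first variation of the immersion datum under an isomonodromic deformation with normal component $\nu$ and tangential component $X^T$ was computed in terms of the self-adjoint derivative $\Ae^X$. I would reuse those formulas: multiplying $\tilde X$ by $i$ in the complexified picture corresponds, at the level of the associated $1$-cocycle, to multiplying $\tilde\sigma_{\tilde X}$ by $i$ — but $i\tilde\sigma_{\tilde X}$ is no longer of the form $\tilde\sigma_{\tilde Y}$ for a real $\tilde Y$ (as the Remark on non-surjectivity of $\tilde\theta$ points out). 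So the comparison is genuinely nontrivial.

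The heart of the argument: I expect that $i\theta_1=\tilde D(i\tilde\sigma_{\tilde X})$, and I need to show this equals $\theta_i=\tilde D\tilde\sigma_{\tilde Y}$ up to $D(\nu N)$ for some $\nu:S\to\CC$, where $\tilde Y$ is the variational field realizing the path $\phi+t(i\dot\phi)$. The plan is: (1) write $i\tilde\sigma_{\tilde X}=i\tilde X-\tilde X'$ and compare with $\tilde\sigma_{\tilde Y}=\tilde Y+i\tilde Y'$ — both represent sections of $\widetilde E\cong\tilde f^*T_\CC\HH^3$; the two paths $\phi+t\dot\phi$ and $\phi+t(i\dot\phi)$ induce first-order changes of $\phi$ that are related by multiplication by $i$, so the corresponding changes $\dot I$, $\dot a$ of the immersion data are related in a prescribed way. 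Using $\phi=b-iJba$, the relation $\dot\phi\mapsto i\dot\phi$ swaps real and imaginary parts with a sign, which translates into: $\tilde Y$ and $i\tilde X-\tilde X'$ have the same image under $\tilde D$ after correcting by a $D$-parallel-up-to-normal term. (2) The normal correction $\nu N$ appears precisely because the normalization of the immersion (the choice of unit normal, or equivalently the degree of freedom in $\eta J^I$ in the formula $b^{-1}\dot b=\nabla^I X^T+\nu a+\eta J^I$ from the proof of Proposition \ref{pr:extr}) is not canonical; $N$ is the $\rho_\phi$-invariant section of $E$ that absorbs this ambiguity, and $D(\nu N)$ is exactly the exact form that accounts for it. I would verify $i\theta_1-\theta_i+D(\nu N)=0$ by a direct computation on tangent vectors, splitting into tangential and normal parts and using Lemma \ref{lm:propsigma}(iii) together with \cite[Lemma 5.7]{cyclic} (the identity $J^{\tilde I}\Ae_{\tilde f}^{\tilde X'}+\langle\tilde X+\tilde X'',\tilde N\rangle\Id=0$) as in the proof of Lemma \ref{lm:propsigma}.

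The main obstacle I anticipate is step (1): precisely identifying the variational field $\tilde Y$ associated to $i\dot\phi$ in terms of $\tilde X$. Multiplication by $i$ on $T_\phi\Data$ is defined via the complex structure on $\Data$ coming from $\Cod=\Codtr\oplus\cQ_\CC$ (Theorem B), not via any naive operation on $\tilde X$, so I must trace through how this complex structure interacts with the correspondence $\tilde X\leftrightarrow\dot\phi$. The resolution should be that the operator $\phi=b-iJba$ itself is the bridge: the complex structure multiplies $\phi$ by $i$, and one checks that the variational formula $\dot\phi=\dot b-iJ(\dot b a+b\dot a)$ (plus lower-order terms from varying $J^I$, $b$) transforms correctly. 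Once the dictionary between $i$-multiplication on data and the corresponding operation on variational fields is pinned down, the rest is the pointwise identity, which I expect to reduce — after using that $\Ae^{\tilde X'}_{\tilde f}=0$ is generally false but $\Ae$ of the relevant combination vanishes by the Euler-Lagrange equations — to a clean cancellation, with the leftover being exactly $D(\nu N)$. The final sentence ($[\theta_i]=i[\theta_1]$ in $H^1_D(E)$) is then immediate since $D(\nu N)\in B^1_D(E)$, which identifies $d\Mon_\phi$ as $\CC$-linear after passing to cohomology.
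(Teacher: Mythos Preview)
Your overall intuition is sound --- split into tangential and normal components, expect a correction of the form $D(\nu N)$ --- but the concrete plan has a genuine gap. You try to relate the variational field $\tilde Y$ realizing $i\dot\phi$ directly to $\tilde X$ (your ``main obstacle''), and hope that tracing the dictionary $\tilde X\leftrightarrow\dot\phi$ through multiplication by $i$ will yield a pointwise identity. This is both harder and less informative than what the paper does, and you never say how the normal component is actually controlled beyond hoping for ``a clean cancellation''.

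The paper's proof avoids identifying $\tilde Y$ altogether. The key step is a universal formula (Proposition~\ref{prop:holo}): for \emph{any} deformation with associated cocycle $\theta$ and first-order change $\dot\phi$, one has $\dot\phi=b\,\theta^T+\eta\,J\phi$ for some real function $\eta$. This is proved by a direct computation of $\dot b$ and $(ba)\dot{}$ in terms of the self-adjoint derivatives $\Ae^{\tilde X}$, $\Ae^{\tilde X'}$ (using the formulas from \cite{cyclic} you mention), and then recognizing the tangential part of $\theta$ in the answer. Applying this formula once to $\dot\phi$ (getting $\theta_1^T,\eta_1$) and once to $i\dot\phi$ (getting $\theta_i^T,\eta_i$), subtracting, and using $J\phi=-ib\,DN$ gives $i\theta_1^T-\theta_i^T+\nu\,DN=0$ with $\nu=\eta_1+i\eta_i$. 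No comparison of $\tilde X$ and $\tilde Y$ is ever needed.

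The second ingredient you are missing is Lemma~\ref{lm:closed}: a $D$-closed $E$-valued $1$-form with vanishing tangential component is zero. This is what disposes of the normal component; its proof is a short linear-algebra argument that uses $\det(a)\neq 1$, which holds because the immersion is critical (indeed $\det(a)<0$). Once you set $\tau:=i\theta_1-\theta_i+D(\nu N)$, the tangential identity above says $\tau^T=0$ (since the tangential part of $D(\nu N)=d\nu\otimes N+\nu\,DN$ is $\nu\,DN$), and Lemma~\ref{lm:closed} forces $\tau=0$. Your proposal gestures at a tangential/normal split but does not isolate either of these two lemmas, and without them the argument does not close.
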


\begin{remark}
Since the function $\nu$ can be nonzero,
the map $T_\phi\Data\rightarrow Z^1_D(E)$ is not $\CC$-linear in general.
\end{remark}

Since $\tilde{f}$ is an immersion, $T\widetilde{S}$ is a subbundle of $\tilde{f}^*T\HH^3$.
Having identified $\tilde{f}^*T_\CC\HH^3$ to $\widetilde{E}$,
it makes sense to decompose $\theta_1$ 
into a component $\theta_1^T$ tangent to the surface and a normal component.
More explicitly, separating real and imaginary parts as $\theta_1=\Re(\theta_1)+i\IM(\theta_1)$,
we have $\theta_1^T=\Re(\theta_1)^T+i\Im(\theta_1)^T$. Indeed, 
$\theta_1^T$ is a $T_\CC S$-valued $1$-form on $S$.

The following proposition relates $\dot\phi$ with $\theta_1^T$ and will be the key 
point to prove Theorem \ref{thm:holo}.

\begin{prop}[First-order variation of immersion data and $1$-cocycles]\label{prop:holo}
There exists a smooth function $\eta:S\rightarrow \RR$ such that
\[
\dot\phi=b\theta_1^T\,+\, \eta J\phi.
\]
\end{prop}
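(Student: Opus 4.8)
The plan is to compute the first-order variation $\dot\phi$ of the immersion datum $\phi = b - iJba$ directly, express it in terms of the variational field $\tilde X$ of the deformation $\boldsymbol{\tilde f}$, and then recognize the tangential part of the $\widetilde E$-valued $1$-form $\tilde\theta_{\tilde X} = \tilde D\tilde\sigma_{\tilde X}$ inside that expression. Recall from the proof of Proposition~\ref{pr:extr} that if $\tilde X = \tilde X^T + \tilde\nu\tilde N$ then $b^{-1}\dot b = \nabla^I X^T + \nu a + \eta J^I$ for some function $\eta$, equivalently $\dot b = b\nabla^I X^T + \nu\, ba + \eta\, bJ^I = b\nabla^I X^T + \nu\, ba + \eta\, Jb$. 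I would begin by carrying out the analogous differentiation for the full operator $\phi = b - iJba$, using $d^\nabla\phi = 0$ along the whole family, the normalization $\det\phi = 1$ (so $\tr(\phi^{-1}\dot\phi) = 0$), and the relations already established in Section~\ref{sec:euler-lagrange}. This will produce $\dot\phi$ as a sum of a term built from the covariant derivative of the tangential part of $\tilde X$, a term proportional to $\phi$ itself (the "$\eta J\phi$" term, reflecting the $U(1)$-ambiguity in the square-root/complex structure), and possibly a term involving $\nu$.

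Next I would unwind the definition of $\tilde\theta_{\tilde X}$. By Lemma~\ref{lemma:E}, $\tilde D_\bullet(\HX + i\HY) = \nabla^{\HH^3}_\bullet(\HX+i\HY) + i(\HX+i\HY)\times\bullet$, and $\tilde\sigma_{\tilde X} = \tilde X + i\tilde X'$ with $\Se^{\tilde X}_{\tilde f} = \tilde X'\times\bullet$. So $\tilde\theta_{\tilde X}(\tilde w) = \nabla^{\HH^3}_{\tilde w}\tilde X + \nabla^{\HH^3}_{\tilde w}(i\tilde X') + i(\tilde X + i\tilde X')\times\tilde w$. Decomposing $\nabla^{\HH^3}\tilde X = \Ae^{\tilde X}_{\tilde f} + \Se^{\tilde X}_{\tilde f}$ via Lemma~\ref{lm:L}, and recalling $\Ae^{\tilde X}_{\tilde f} = \nabla^{\tilde I}\tilde X^T + \tilde\nu\tilde a$, the tangential part $\tilde\theta_{\tilde X}^T$ of this $1$-form (projecting $T_{\tilde f}\HH^3$-values onto the tangent plane $T\widetilde S$) should come out to be essentially $\nabla^I X^T + \nu a$ plus $J^I$-multiples and cross-product corrections — matching, up to the $\eta J^I$ ambiguity, the expression $b^{-1}\dot\phi$ that I obtained in the first step, once I multiply through by $b$ and use $bJ^I = Jb$ and $\phi = b - iJba$. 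The identity $\dot\phi = b\,\theta_1^T + \eta J\phi$ should then fall out by collecting terms, with $\theta_1^T$ being the $T_\CC S$-valued form $\Re(\theta_1)^T + i\Im(\theta_1)^T$ as defined just above the statement.

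The main obstacle I anticipate is the careful bookkeeping of the imaginary part: the term $\nabla^{\HH^3}(i\tilde X')$ brings in $\Ae^{\tilde X'}_{\tilde f}$ and $\Se^{\tilde X'}_{\tilde f}$, and one needs the identity $J^{\tilde I}\Ae^{\tilde X'}_{\tilde f} + \langle \tilde X + \tilde X'',\tilde N\rangle\Id = 0$ from \cite[Lemma~5.7]{cyclic} (already invoked in the proof of Lemma~\ref{lm:propsigma}(iii)) to control $\tilde X'$ in terms of the shape operator $a$ and the normal component. Concretely, $\Ae^{\tilde X'}_{\tilde f}$ relates to $a$-twisted quantities, and one must check that, after projecting to the tangent plane and comparing real and imaginary parts, the coefficient of $a$ in $b\,\theta_1^T$ is exactly $-i\,\nu\, ba$ plus what is needed to reconstruct $-iJba$ inside $\phi$. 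The $\nu$-dependence must ultimately be absorbed: the normal-displacement part of the deformation contributes to $\dot b$ through $\nu\, ba$ (real) and, via $\tilde X'$, to the imaginary part of $\tilde\theta_{\tilde X}^T$ in a way that reassembles $-iJb(\,a\,)$, so that no free $\nu$ survives outside the combination $b\theta_1^T + \eta J\phi$. Getting these cancellations to close — in particular verifying that the leftover terms are precisely a real multiple of $J\phi = Jb + ib a \cdot$(something), i.e.\ of the form $\eta J\phi$ — is where the real work lies; everything else is the linear-algebra identities for $2\times 2$ matrices ($bJ^I = Jb$, $b^{-1} = -(\det b)^{-1}JbJ$, $\det b = 1$ on $\Data$) already used repeatedly in Corollary~\ref{cr:extr} and Corollary~\ref{cr:extcplx}.
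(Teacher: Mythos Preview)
Your plan is essentially the paper's own approach: compute $\dot b$ and $\dot{(ba)}$ separately, assemble $\dot\phi$, compute $\tilde\theta_1^T$ from the definition $\tilde D\tilde\sigma_{\tilde X}$, and match. A few corrections will make the sketch go through cleanly.

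First, the constraints $d^\nabla\phi=0$ and $\det\phi=1$ (and certainly not ``$\det b=1$'', which is false unless $a=0$) play no role here: the identity $\dot\phi=b\theta_1^T+\eta J\phi$ is a pointwise statement about how first-order variations of $(I,a)$ package into $\theta_1^T$, valid for any smooth family of immersions, not just minimizing ones. Drop those from your plan.

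Second, and more substantively, the identity $J^{\tilde I}\Ae^{\tilde X'}+\langle\tilde X+\tilde X'',\tilde N\rangle\Id=0$ that you quote from \cite[Lemma~5.7]{cyclic} holds only under the hypothesis $\Ae^{\tilde X}=0$; that is precisely the assumption under which it is invoked in Lemma~\ref{lm:propsigma}(iii). For a general variational field $\tilde X$ you need instead the formula for $\dot a$ itself, namely \cite[Lemma~5.6]{cyclic}:
\[
\dot{\tilde a}=J^{\tilde I}\Ae^{\tilde X'}-\langle\tilde X+\tilde X'',\tilde N\rangle\,\Id-\Ae^{\tilde X}\tilde a\,.
\]
With this in hand, the term $-b\Ae^{\tilde X}\tilde a$ in $b\dot a$ cancels against the $\dot b\cdot a$ contribution in $\frac{d}{dt}(ba)$, and what remains combines with $\dot b=b\Ae^{\tilde X}+\eta Jb$ to give exactly $b\theta_1^T+\eta J\phi$ once you check (as you outlined) that
\[
\tilde\theta_1^T=\Ae^{\tilde X}+i\bigl(\Ae^{\tilde X'}+\langle\tilde X+\tilde X'',\tilde N\rangle\,J^{\tilde I}\bigr).
\]
No separate tracking of $\nu$ is needed: it is already inside $\Ae^{\tilde X}=\nabla^{\tilde I}\tilde X^T+\tilde\nu\,\tilde a$, and nothing further has to be ``absorbed''.
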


\begin{proof}
Recall that $\tilde{\theta}_1=\tilde{D}\tilde{\sigma}_{\tilde{X}}=\tilde{D}(\tilde{X}+i\tilde{X}')$,
where $\tilde{X}'$ is defined as above in Section \ref{sec:sigma}.
Since $\tilde{f}$ is fixed, we denote the $\tilde{I}$-self-adjoint derivative $\Ae_{\tilde{f}}$
and the $\tilde{I}$-skew-self-adjoint derivative $\Se_{\tilde{f}}$ just by $\Ae$ and $\Se$.

%

As in Lemma 5.5 of \cite{cyclic}, we have
\[
\dot{\tilde{I}}=2\tilde{I}(\Ae^{\tilde{X}}\bullet,\bullet).
\]
Since
$$ \dot{\tilde{I}}=\tilde{h}(\dot{\tilde{b}}, \tilde{b})+\tilde{h}(\tilde{b},\dot{\tilde{b}})
=\tilde{I}(\tilde{b}^{-1}\dot{\tilde{b}}\bullet,\bullet)+\tilde{I}(\bullet,\tilde{b}^{-1}\dot{\tilde{b}}\bullet)~, $$
the operators $\Ae^{\tilde X}$ and $b^{-1}\dot b$ have the same $\tilde{I}$-self-adjoint component, so
\begin{equation}\label{eq:dotb}
\dot{\tilde{b}}=\tilde{b} \Ae^{\tilde{X}}+\tilde{\eta} \tilde{J}\tilde{b}
\end{equation}
for some smooth function $\tilde{\eta}:\widetilde{S}\rightarrow \RR$.
On the other hand, by Lemma 5.6 of \cite{cyclic} we have
\[
\dot{\tilde{a}}=J^{\tilde{I}} \Ae^{\tilde{X}'}-\langle\tilde{X}+\tilde{X}'',\tilde{N}\rangle\En-\Ae^{\tilde{X}} \tilde{a}~.
\]
Since $\frac{d}{dt} \left(\tilde{b}\tilde{a}\right)=\tilde{b}\dot{\tilde{a}}+\dot{\tilde{b}} \tilde{a}$, we get
\begin{eqnarray}
  \frac{d}{dt} \left(\tilde{b}\tilde{a}\right) & =& 
  \tilde{b}J^{\tilde{I}}\Ae^{\tilde{X}'}-\langle\tilde{X}+\tilde{X}'',\tilde{N}\rangle \tilde{b}-\tilde{b}\Ae^{\tilde{X}} \tilde{a}+
\tilde{b}\Ae^{\tilde{X}} \tilde{a}+\tilde{\eta}\tilde{J}\tilde{b}\tilde{a} \\
 & = & \tilde{J}\tilde{b}\Ae^{\tilde{X}'}-\langle\tilde{X}+\tilde{X}'',\tilde{N}\rangle \tilde{b}+\tilde{\eta}\tilde{J}\tilde{b}\tilde{a}~. \label{eq:dotba}
\end{eqnarray}

Since $\tilde{\phi}=\tilde{b}-i\tilde{J}\tilde{b}\tilde{a}$, 
using (\ref{eq:dotb}) and (\ref{eq:dotba}), we obtain that
\begin{eqnarray*}
\dot{\tilde{\phi}} & = & \tilde{b}\Ae^{\tilde{X}}+\tilde{u}\tilde{J}\tilde{b}+i(\tilde{b}(\Ae^{\tilde{X}'}+\tilde{\eta}\tilde{a})+\langle\tilde{X}+\tilde{X}'',\tilde{N}\rangle \tilde{J}\tilde{b})\\
& = & \tilde{b}\Ae^{\tilde{X}}+\tilde{\eta}\tilde{J}\tilde{b}+i\tilde{b}(\Ae^{\tilde{X}'}+\tilde{\eta}\tilde{a}-\langle\tilde{X}+\tilde{X}'',\tilde{N}\rangle \tilde{b}^{-1}\tilde{J}\tilde{b})\\
& = & \tilde{b}(\Ae^{\tilde{X}}+i(\Ae^{\tilde{X}'}+\langle \tilde{X}+\tilde{X}'',\tilde{N}\rangle J^{\tilde{I}}))+\tilde{\eta} \tilde{J}\tilde{b}+i\tilde{\eta}\tilde{b}\tilde{a}~.
\end{eqnarray*}

Notice that
\begin{eqnarray*}
  \tilde{\theta}_1 & = & \tilde D(\tilde X+i\tilde X') \\
  & = & (\nabla \tilde X - \tilde X'\times \bullet) + i(\nabla \tilde X' + \tilde X\times \bullet)\\
  & = & (\Ae^{\tilde X} + \Se^{\tilde X} - \tilde X'\times \bullet) + i(\Ae^{\tilde X'}+\Se^{\tilde X'} + \tilde X\times \bullet)\\
  & = & \Ae^{\tilde{X}}+i(\Ae^{\tilde{X}'}+(\tilde{X}+\tilde{X}'')\times\bullet)~,
\end{eqnarray*}
and so
$\tilde{\theta}_1^T=\Ae^{\tilde{X}}+i(\Ae^{\tilde{X}'}+\langle\tilde{X}+\tilde{X}'',\tilde{N}\rangle J^{\tilde{I}})$. It follows that
\[
\dot{\tilde{\phi}}=\tilde{b}\tilde{\theta}_1^T+\tilde{\eta}\tilde{J}(\tilde{b}-i\tilde{J}\tilde{b}\tilde{a})=\tilde{b}\tilde{\theta}_1^T+
\tilde{\eta}\tilde{J}\tilde{\phi}.
\]
Since all the other tensors are invariant, $\tilde{\eta}$ must come from
a function $\eta:S\rightarrow \RR$ and the result follows.
\end{proof}

As a consequence of the above proposition, we obtain a relation between
the tangent components of $\theta_1$ and $\theta_i$.

\begin{cor}[Tangent components of $\theta_1$ and $\theta_i$]\label{cor:holo}
There exists a smooth complex valued function $\nu:S\rightarrow\CC$ such that
$i\theta_1^T-\theta_i^T+\nu\cdot DN=0$.
\end{cor}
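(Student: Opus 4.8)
The plan is to deduce the corollary from Proposition~\ref{prop:holo} by applying that proposition to both deformations and taking the obvious $\CC$-linear combination. First I would recall the set-up: $\theta_1=\theta_{\tilde{X}}$ is the $1$-cocycle attached to a deformation of $(\tilde{f}_\phi,\rho_\phi)$ whose variational field $\tilde{X}$ realizes the first-order variation $\dot\phi$ of the immersion datum, while $\theta_i$ is the $1$-cocycle of a deformation realizing the variation $i\dot\phi$. Proposition~\ref{prop:holo}, applied to each of these, produces smooth functions $\eta_1,\eta_2:S\to\RR$ with
\[
\dot\phi=b\,\theta_1^T+\eta_1\,J\phi,\qquad i\dot\phi=b\,\theta_i^T+\eta_2\,J\phi .
\]
Multiplying the first identity by $i$ and subtracting the second eliminates $\dot\phi$ and leaves
\[
b\,(i\theta_1^T-\theta_i^T)=(\eta_2-i\eta_1)\,J\phi .
\]

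Next I would make the $1$-form $DN$ explicit. Viewing the positively-oriented unit normal $\tilde{N}$ as a section of $\widetilde{E}\cong\tilde{f}_\phi^*T_\CC\HH^3$, its skew-self-adjoint derivative vanishes: indeed $\nabla^{\HH^3}_\bullet\tilde{N}=d\tilde{f}_\phi(\tilde{a}\,\bullet)$ is tangent to the immersion and $\tilde{I}$-self-adjoint, so in the notation of Section~\ref{sec:sigma} we have $\tilde{\sigma}_{\tilde{N}}=\tilde{N}$. By Lemma~\ref{lemma:E} it follows that $DN=\tilde{D}\tilde{N}$ is purely tangent, and identifying its tangential part with an endomorphism of $TS$ via $d\tilde{f}_\phi$ a short computation gives $DN=a\mp iJ^I$ (the sign depending on the orientation conventions fixed for $\HH^3$). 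On the other hand, writing $\phi=b(\Id-iJ^Ia)$ and using $Jb=bJ^I$ one finds $J\phi=b(J^I+ia)$, so $J\phi$ equals a fixed unimodular scalar times $b\circ DN$; with the conventions of the paper this is $b\circ DN=-i\,J\phi$. Substituting this into the last display and cancelling the operator $b$ — invertible, since $b=\Re(\phi)$ is positive definite by \eqref{eq:strcplx4} — gives $i\theta_1^T-\theta_i^T=i(\eta_2-i\eta_1)\,DN$, so the corollary holds with $\nu:=-i(\eta_2-i\eta_1)=-\eta_1-i\eta_2\in C^\infty(S,\CC)$.

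The one delicate step will be the sign-and-orientation bookkeeping in the identification $\widetilde{E}\cong\tilde{f}_\phi^*T_\CC\HH^3$ and in the resulting formula for $DN$: it is precisely the sign of the $J^I$-term that makes $b\circ DN$ proportional to $J\phi$ rather than to its complex conjugate, and hence makes the final cancellation land on $DN$ itself. Everything else — the two invocations of Proposition~\ref{prop:holo}, the elimination of $\dot\phi$, and reading off $\nu$ — is formal. I would also point out that the term $\nu\,DN$ obtained here is exactly the tangential component of $D(\nu N)$, since the complementary piece $d\nu\otimes N$ is normal; this is what will allow the present corollary to be upgraded to Theorem~\ref{thm:holo}.
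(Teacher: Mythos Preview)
Your proof is correct and follows essentially the same route as the paper: apply Proposition~\ref{prop:holo} to both deformations, eliminate $\dot\phi$, rewrite $J\phi$ as a unimodular multiple of $b\cdot DN$, and cancel the invertible $b$. The paper records the relation as $J\phi=-ib\cdot DN$ (equivalently $b\cdot DN=iJ\phi$) and sets $\nu=\eta_1+i\eta_i$, so your sign in $b\circ DN=-iJ\phi$ is off by a sign relative to the paper's conventions; as you note, this only changes $\nu$ by a sign and is immaterial to the statement.
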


\begin{proof}
By Proposition \ref{prop:holo} 
there exist smooth functions $\eta_1,\eta_i:S\rightarrow\RR$ such that
\begin{align*}
\dot\phi=b\theta_1^T+\eta_1 J\phi\\
i\dot\phi=b\theta_i^T+\eta_i J\phi
\end{align*}
so we deduce that
$b(i\theta_1 - \theta_i)^T+i\nu\cdot J\phi=0$, where $\nu:=\eta_1+i\eta_i$.
Recalling that $DN=-a+i J^I$, or equivalently $J\phi=-ib\cdot DN$, we obtain that
\[
 b\cdot(i\theta_1^T-\theta_i^T+\nu\cdot DN)=0\,.
\]
Since $b$ is invertible, the result follows.
\end{proof}

The relation obtained in the above corollary is almost the wished one.
In order to take care of the normal component of $\theta_1$ and $\theta_i$, we will need the following.

\begin{lemma}[Vanishing $1$-cocycles are detected by their tangent component]\label{lm:closed}
Let $\tau\in Z^1_D(E)$ be any smooth $1$-cocycle. Then $\tau=0$ if and only if $\tau^T=0$.
\end{lemma}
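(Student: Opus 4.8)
The plan is to exploit the structural form of elements of the image of $\tilde\sigma$, combined with the flatness of $D$. Recall that a $1$-cocycle $\tau\in Z^1_D(E)$ arising from a first-order deformation of $(\tilde f,\rho)$ is of the shape $\tau = \tilde\theta_{\tilde X} = \tilde D\tilde\sigma_{\tilde X}$ for a variational field $\tilde X$; from the computation in the proof of Proposition \ref{prop:holo}, such a $\tilde\theta_{\tilde X}$ has tangent part $\tilde\theta^T = \Ae^{\tilde X}+i(\Ae^{\tilde X'}+\langle\tilde X+\tilde X'',\tilde N\rangle J^{\tilde I})$ and normal part entirely determined by the skew-self-adjoint derivatives, namely the $\widetilde{E}$-component transverse to $T\widetilde{S}$ is $(\tilde X+\tilde X'')\times\bullet$ paired against $\tilde N$, i.e.\ $\langle\tilde X+\tilde X'',\tilde N\rangle$ up to a cross-product factor. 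So the statement amounts to showing that if the self-adjoint-derivative part vanishes identically then the section $\tilde\sigma_{\tilde X}$ is $\tilde D$-parallel, hence $\tau = 0$.

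First I would reduce to the universal cover: $\tau^T=0$ on $S$ lifts to $\tilde\tau^T=0$ on $\widetilde S$, where $\tilde\tau=\tilde D\tilde\sigma_{\tilde X}$ for some $\tilde X$. Writing $\tilde\tau = \tilde D(\tilde X+i\tilde X')$ and using Lemma \ref{lemma:E} to express $\tilde D$ in terms of $\nabla^{\HH^3}$, one has $\tilde\tau = \nabla^{\HH^3}\tilde X - \tilde X'\times\bullet + i(\nabla^{\HH^3}\tilde X'+\tilde X\times\bullet)$. Now the key point is that the condition ``$\tilde\tau$ is tangent-free'' constrains things: the tangential part of the \emph{real} part of $\tilde\tau$ is $\Ae^{\tilde X}$, so $\tilde\tau^T=0$ forces $\Ae^{\tilde X}=0$. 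Once $\Ae^{\tilde X}=0$, I can invoke exactly the argument already carried out in the proof of Lemma \ref{lm:propsigma}(iii): from $\Ae^{\tilde X}=0$ one derives via \cite[Lemma 5.7]{cyclic} first that $\Ae^{\tilde X'}=0$ (using that $J^{\tilde I}\Ae^{\tilde X'}$ is traceless), then that $\tilde X+\tilde X''=0$ (comparing the two curvature identities \eqref{eq:tang} and \eqref{eq:tang2}). These two facts together give $\tilde D\tilde\sigma_{\tilde X}=0$, i.e.\ $\tilde\tau=0$, hence $\tau=0$.

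The one thing to be careful about is the direction of the argument: I must check that ``$\tau^T=0$'' genuinely implies $\Ae^{\tilde X}=0$ and not merely that some combination of $\Ae^{\tilde X}$ with other terms vanishes. This is where the splitting into real and imaginary parts matters: $\Re(\tilde\tau)$ is $\nabla^{\HH^3}\tilde X - \tilde X'\times\bullet = \Ae^{\tilde X}+\Se^{\tilde X}-\tilde X'\times\bullet = \Ae^{\tilde X}$ (since $\Se^{\tilde X}=\tilde X'\times\bullet$ cancels), so $\Re(\tilde\tau)$ is \emph{purely} the tangent-to-tangent self-adjoint operator $\Ae^{\tilde X}$ — it has no normal component at all. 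Therefore $\tau^T=0$ already forces $\Re(\tilde\tau)=\Ae^{\tilde X}=0$ on the nose. Then the imaginary part is handled by the chain of identities above, which shows $\Im(\tilde\tau)=0$ as well once $\Ae^{\tilde X}=0$. The converse ($\tau=0\Rightarrow\tau^T=0$) is trivial. I expect the only mild obstacle is bookkeeping the identification $\widetilde E\cong \tilde f^*T_\CC\HH^3$ carefully enough to be sure which components of $\tilde\tau$ are ``tangent'' in the sense of the statement, but since the relevant computation is a verbatim reuse of the proof of Lemma \ref{lm:propsigma}(iii), no genuinely new input is required.
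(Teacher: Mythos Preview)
Your argument has a genuine gap: you assume from the outset that $\tau$ is of the form $\tilde D\tilde\sigma_{\tilde X}$ for some variational field $\tilde X$, but the lemma is stated for an \emph{arbitrary} smooth cocycle $\tau\in Z^1_D(E)$. The paper has already remarked (just after the non-equivariant version of the lemma) that the map $\tilde\theta$ is \emph{not} surjective onto $Z^1_{\tilde D}(\widetilde E)$, since cocycles in its image are determined by their real part. So your reduction to Lemma~\ref{lm:propsigma}(iii) via $\Ae^{\tilde X}=0$ is simply unavailable for a general $\tau$. Worse, in the only application of this lemma (the proof of Theorem~\ref{thm:holo}) the cocycle in question is $\tau=i\theta_1-\theta_i+D(\nu N)$: the factor of $i$ in front of $\theta_1$ takes you out of the image of the $\RR$-linear map $\tilde\sigma$, so even for that specific $\tau$ you cannot write it as $\tilde D\tilde\sigma_{\tilde Y}$.

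The paper's proof is entirely different and does not rely on any such representation. It writes $\tau=\zeta\otimes N$ for a complex $1$-form $\zeta$ (this is what $\tau^T=0$ means), uses $D$-closedness to obtain $\zeta\wedge DN=0$, and then exploits the explicit formula $DN=-a+iJ^I$: expanding $\zeta\wedge DN=0$ in an $I$-orthonormal eigenbasis of $a$ yields a linear system in the components of $\zeta$ whose only nontrivial solutions would force $\det(a)=1$, contradicting $\det(a)<0$ for a critical immersion. This argument uses nothing about $\tau$ beyond $D\tau=0$ and $\tau^T=0$, which is exactly what is needed.
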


\begin{proof}
Clearly, if $\tau=0$, then its tangent component vanishes.

Conversely, suppose that $\tau^T=0$, so that
$\tau=\zeta\otimes N$, where 
$\zeta$ is a (complex-valued) $1$-form on $S$.
We want to prove that $\zeta=0$.

Since $0=D\tau=d\zeta\otimes N+\zeta\wedge DN$, and since $DN$ takes values in $T_\CC S$,
we deduce that $d\zeta=0$ and $\zeta\wedge DN=0$.
 
Fix any point of $S$ and take a $I$-orthonormal basis 
$(e_1, e_2)$ of $TS$ at that point, formed by eigenvectors for $a$. 
Then imposing that $(\zeta\wedge DN)(e_1,e_2)=0$,  and using that 
$DN=-a+i J^{I}$, one gets
\[
\zeta\wedge DN=\left( -(\Re \zeta)\wedge a-(\Im\zeta)\wedge J^I\right)
+i\left( (\Re\zeta)\wedge J^I-(\Im\zeta)\wedge a\right)
\]
and so
\begin{eqnarray*}
0=2\Re(\zeta\wedge DN)(e_1,e_2)=-\left( (\Re\zeta)(e_1)\, a(e_2)+(\Im\zeta)(e_1) \,J^I e_2\right)\ +\ 
\left( (\Re\zeta)(e_2)\, a(e_1)+(\Im\zeta)(e_2)\, J^I e_1\right)~,\\
0=2\Im(\zeta\wedge DN)(e_1,e_2)=\left((\Re\zeta)(e_1)\,J^I e_2-(\Im\zeta)(e_1)\,a(e_2)
\right)\ -\ 
\left((\Re\zeta)(e_2)\,J^I e_1-(\Im\zeta)(e_2)\,a(e_1)\right)~.
\end{eqnarray*}
Putting $a(e_i)=\lambda_i e_i$, by the first equation it follows that
\begin{equation}\label{eq:real}
\begin{array}{l}
\lambda_2(\Re\zeta)(e_1)=(\Im\zeta)(e_2)~,\\
-(\Im\zeta)(e_1)=\lambda_1(\Re\zeta)(e_2)~.
\end{array}
\end{equation}
By the second equation 
\begin{equation}\label{eq:imaginary}
\begin{array}{l}
(\Re\zeta)(e_1)=\lambda_1(\Im\zeta)(e_2)~,\\
-\lambda_2(\Im\zeta)(e_1)=(\Re\zeta)(e_2)~.
\end{array}
\end{equation}

Suppose by contradiction that $\zeta\neq 0$.
Then Equations \eqref{eq:real} and \eqref{eq:imaginary} imply that
$\det(a)=\lambda_1 \lambda_2=1$. 
But for a critical immersion $\det(a)<0$. Such contradiction proves the lemma.
\end{proof}

The $\CC$-linearity of $d\Mon$ is then readily obtained.

\begin{proof}[Proof of Theorem \ref{thm:holo}] 
Let $\nu$ be the function given in Corollary \ref{cor:holo}
and consider the $1$-cocycle
$\tau:=i\theta_1-\theta_i+D(\nu N)\in Z^1_D(E)$.
Since
\[
\tau^T=i\theta_1^T-\theta_i^T+\nu \cdot DN,
\]
Corollary \ref{cor:holo} implies that $\tau^T=0$.
Hence, we conclude that $\tau=0$ by Lemma \ref{lm:closed}.
\end{proof}

We can now give a complete proof of Theorem \ref{main:monodromy-holomorphic}.

\begin{proof}[Proof of Theorem \ref{main:monodromy-holomorphic}]
By Lemma \ref{lemma:minimizing-group}, a minimal immersion corresponding
to $\phi\in\Data$ has non-elementary monodromy.
Moreover, the map $\Mon$ is holomorphic by Theorem \ref{thm:holo}
and it is injective by Corollary \ref{cor:uniqueness-minima}.
Since $\Data$ and $\cX$ are complex manifolds of the same dimension,
$\Mon$ is a biholomorphism onto its image, which is in fact an open
subset of $\cX$.
Note finally that such open subset $\Mon(\Data)$ contains the Fuchsian
locus by Theorem \ref{thm:2d}.
\end{proof}

\subsection{The complexified functional}

As in the introduction,
define now the functional
$\F:\cX\rar\RR_{\geq 0}$ as
\[
\F(\rho):=\inf\left\{F(\tilde{f})\ \big|\ [\rho,\tilde{f}]\in\Imm\right\}.
\]
The uniqueness proven in Corollary \ref{cor:uniqueness-minima}
implies that
the map $\Mon$ that sends the immersion datum $\phi$
to the class $[\rho_\phi]$ of the monodromy representation of
the immersion corresponding to $\phi$ is injective.

Thus, we can identify $\Data$ to $\Mon(\Data)\subset\cX$,
so that
\[
\F(\rho_\phi)=F(\phi)=\int_S \tr(\Re(\phi))\da=
\Re\int_S \tr(\phi)\da
\]
for every $\phi\in\Data$,
by the minimality property of Corollary \ref{cor:uniqueness-minima}.
By the complex nature of $\Data$, such $\F$ can be viewed
as the real part of $\F_{\CC}:\Mon(\Data)\rightarrow\CC$
defined as
\[
\F_{\CC}([\rho]):=\int_S \tr(\Mon^{-1}(\rho))\da\,.
\]
We can now prove the last main result of our paper.

\begin{proof}[Proof of Theorem \ref{main:complexified-F}]
In view of the above discussion, we are only left to show that
$\F_{\CC}$ is a holomorphic function.
Note that the map $\Data\rightarrow\CC$ defined as
$\phi\mapsto\int_S \tr(\phi)\da$ is clearly holomorphic.
Thus the conclusion follows, since $\Mon:\Data\rightarrow\Mon(\Data)$
is a biholomorphism by Theorem \ref{main:monodromy-holomorphic}.
\end{proof}

\section{Questions and applications}\label{sec:problems}


Let $\cT(S)$ be the Teichm\"uller space of hyperbolic metrics on $S$,
$\cML(S)$ be the space of measured laminations on $S$
and let $\cQF(S)$
be the quasi-Fuchsian space of $S$, i.e.~the space of quasi-Fuchsian metrics
on $M=S\times\RR$. The Fuchsian locus in $\cQF(S)$ consists of metrics
for which $S\times\{0\}$ is a totally geodesic hyperbolic surface.
Consider only maps $f:S\rar M$ that are homotopy equivalences.

\subsection{Existence of smooth minimizing maps}\label{q:smooth}

Once a hyperbolic metric $h\in \cT(S)$ on $S$ is fixed,
we have seen (Theorem \ref{main:monodromy-holomorphic}) that there exists a neighborhood $\Omega_h$ of the Fuchsian locus in $\cQF(S)$ consisting of quasi-Fuchsian structures $g$ on $M$ for which there exists a smooth minimizing map $f:(S,h)\to (M,h_M)$. This smooth minimizing map is unique by Theorem \ref{main:manifold-structure}.

However, we do not know how large this neighbourhood $\Omega_h$ is.
Moreover, a priori, $\Omega_h$ might depend on $h$.
 
Does $\Omega_h$ coincide with the whole $\cQF(S)$?
Does $\Omega_h$ at least contain all ``almost Fuchsian'' structures (i.e.~metrics $h_M$
for which $(M,h_M)$ contains an embedded minimal surface with principal curvatures in $(-1,1)$)? 

The following less ambitious statement asks whether the neighbourhood $\Omega_h$
can be chosen to be independent of the hyperbolic metric $h$.

\begin{question}\label{q:uniform}
Is there a neighborhood $\Omega$ of the Fuchsian locus in $\cQF(S)$ such that, for all $h_M\in \Omega$ and all $h\in \cT(S)$, there exists a smooth minimizing map from $(S,h)$ to $(M,h_M)$? 
\end{question}

We believe that given any quasi-Fuchsian structure $h_M$ on $M$, there exists a mapping $f:(S,h)\to (M,h_M)$ in the BV class
which is minimizing in a weak sense (but which might be not smooth) -- we believe that the existence of minimizing BV maps can be obtained by relatively standard methods. 


\subsection{Uniform convexity and uniqueness among non-smooth maps}

Once $h\in\cT(S)$ and $h_M\in\cQF(S)$ are fixed,
one can ask whether a (possibly non-smooth) minimizing map
$f:(S,h)\rar (M,h_M)$ is unique.
Note that such question is
equivalent to the uniqueness of the $\rho$-equivariant minimizing
$\tilde{f}:(\widetilde{S},\tilde{h})\rar \HH^3$, where $\rho:\pi_1(S)\rar\PSL_2(\CC)$
is the monodromy representation associated to $M\cong\HH^3/\rho(\pi_1(S))$.

Since our arguments for the uniqueness of minimizing maps require some regularity,
we believe that uniqueness can be proven among maps of class $C^1$.
One can ask whether uniqueness holds among continuous maps which are minimizing in the weak sense. This question can be related to the convexity of the functional $F$ over the space of maps of lower regularity from $(S,h)$ to $(M,h_M)$, if $(M,h_M)$ is a quasi-Fuchsian (or more generally a complete hyperbolic) 3-dimensional manifold.
It is even less clear whether uniqueness of the $\rho$-equivariant minimizing
map can be proven for maps $\widetilde{S}\rar\HH^3$ in the BV class.

\subsection{Relation between $\F_\CC$ and the complex length}\label{ssc:complex-length}

When considering diffeomorphisms between hyperbolic surfaces, 
the $1$-energy $F$ is closely related to the hyperbolic length of measured laminations. Specifically, let $h^\star\in \cT_S$ be
a hyperbolic metric with monodromy $\rho$, and let $(h_n)_{n\in \N}$ be a sequence of hyperbolic metrics such that $t_n \cdot h_n\to \lambda$ in the sense of convergence of the length spectrum, where $\lambda\in\cML(S)$ and $t_n\to 0$. Call $f_n:(S,h_n)\rar (S,h^\star)$ the unique minimal Lagrangian map homotopic to the identity, which can be in fact viewed as a minimizing
embedding inside the Fuchsian 3-manifold $(M,h_M)$
associated to $h^\star$, and denote by $\F_{h_n}([\rho])$ the $1$-enegy $F(f_n)$.

In \cite{cyclic} we proved that $t_n \cdot \F_{h_n}([\rho])\rar \ell_\lambda([\rho])$,
where $\ell_\lambda([\rho])$ is the length of the lamination $\lambda$ in $(S,h^\star)$, or equivalently in $(M,h_M)$.
Such result can be rephrased
by saying that, if $\rho$ is the (Fuchsian) monodromy representation
of $(S,h^\star)$, then
$\F_\bullet([\rho])$ defines a continuous function
\[
\F_\bullet([\rho]):(\RR_+\times\cT(S))\cup\cML(S)\lra\RR
\]
that restricts to $\ell_\bullet(h^\star)$ on $\cML(S)$.\\

Suppose now that $(M,h_M)$ is a fixed quasi-Fuchsian manifold 
with monodromy $\rho$ and that
$(h_n)$ is a sequence of hyperbolic metrics on $S$ with
$t_n \cdot h_n\rar\lambda$ and $t_n\rar 0$ as above. Assume that for all $n$
there exists a minimizing map $f_n:(S,h_n)\rar (M,h_M)$ with associated immersion
datum $\phi_n$
(which would follow, for example, if the answer to Question \ref{q:uniform} was positive).
Denote by $\F_{\CC,h_n}([\rho])$ the complex number $\int_S \tr(\phi_n)\omega_{h_n}$
associated to the minimizing map $f_n$.

%

\begin{question} \label{q:complex}
Does $t_n \cdot \F_{\CC,h_n}([\rho])\rar \ell_{\CC,\lambda}([\rho])$, where
$\ell_{\CC,\lambda}([\rho])$ is the complex length of the lamination $\lambda$
in $(M,h_M)$? 
\end{question}

More ambitiously, fixed a quasi-Fuchsian manifold $(M,h_M)$ with
monodromy $\rho$,
one could ask whether the complex valued functional $\F_{\CC,\bullet}([\rho])$
can be extended so to define a continuous function
\[
\F_{\CC,\bullet}([\rho]):(\RR_+\times\cT(S))\cup\cML(S)\lra\CC
\]
that restricts to the complex length function $\ell_{\CC,\bullet}(M,h_M)$ on $\cML(S)$.


\subsection{Non-quasi-Fuchsian targets}

This above questions are not necessarily restricted to quasi-Fuchsian manifolds -- given a closed 3-dimensional hyperbolic manifold $(M,h_M)$ and a homotopy class of maps from $(S,h)$ into $(M,h_M)$ that induce an injection $\pi_1(S)\hra\pi_1(M)$, one can ask whether it contains a smooth minimizing immersion,
or whether uniqueness holds among minimizing maps of lower regularity. The arguments used to prove uniqueness of smooth minimizing maps in quasi-Fuchsian manifolds also work in this setting.


\appendix

\section{On the $1$-Schatten norm of linear maps}\label{sec:app}

In this section we recall properties of the $1$-Schatten norm
on the space of linear homomorphisms between vector space of finite dimension
endowed with a positive-definite scalar product.

\subsection{Definition and basic properties}\label{sec:schatten-def}

Let $V$ and $W$ be finitely generated vector spaces, equipped with positive-definite scalar products,
and assume that $\dim V\leq\dim W$.
Any linear map $L:V\to W$ can be factorized as the composition $L=\sigma_L\circ b_L$, where $b_L=\sqrt{L^T\circ L}$ 
is a non-negative $g$-self-adjoint endomorphism of $V$, the map $\sigma_L:V\rar W$ is an isometric linear embedding and $L^T:W\to V$ denotes the adjoint of $L$.

\begin{remark}[Polar decomposition]
While $b_L$ is always well-defined,  $\sigma_L$ is uniquely determined provided that $L$ is injective (or equivalently that $\det b_L\neq 0$). In this case we refer to the decomposition $L=\sigma_L\circ b_L$ as the {\it{polar decomposition}} of $L$.
\end{remark}

\begin{defi}[$1$-Schatten norm of a linear map]
The {\it{$1$-Schatten norm of a linear map}} $L:V\to W$ is 
$\|L\|_1:=\tr(b_L)$, where $b_L:=\sqrt{L^T\circ L}$.
\end{defi}

\begin{remark}[Lipschitz nature of $1$-Schatten norm on a $\Hom$-space]\label{rmk:1-schatten-nonsmooth}
The function $\|\cdot\|_1:\Hom(V,W)\rar\RR$ is 
a genuine norm is on the vector space $\Hom(V, W)$.
Moreover, it is Lipschitz (say, with respect to the natural Riemannian metric on $\Hom(V,W)$) but not $C^1$ at homomorphisms
of non-maximal rank and it is a smooth at homomorphism of maximal rank.
\end{remark}

\begin{remark}[$1$-Schatten norm and Lipschitz linear maps]\label{rmk:1-schatten-lipschitz}
If $A$ is a linear endomorphism of $W$ which is $C$-Lipschitz,
then $\|AL\|_1\leq C\cdot \|L\|_1$.
This easily follows from the fact that
$\|ALv\|_W\leq C\cdot \|Lv\|_W$ for all $v\in V$.
\end{remark}

\subsection{Convexity}\label{ssc:linear-convexity}

In order to study the convexity properties of the $1$-Schatten norm,
we consider a smooth perturbation of it. For brevity, our treatment is limited
to the special case we are interested in. There result we want to prove is the following.

\begin{prop}[Convexity of $1$-Schatten norm along paths with positive acceleration] \label{lm:convexity}
Let $V$ and $W$ be vector spaces of dimension $2$ and $3$ respectively and let 
$A:[0,1]\rar \Hom(W,W)$ be a smooth path of positive self-adjoint operators on $W$.
Consider a smooth path
$T:[0,1]\rightarrow \Hom(V,W)$
of linear operators such that $\ddot T(s)=A(s)\circ T(s)$.
Then the function  $u:[0,1]\rar\RR$ defined as $u(s):= \|T(s)\|_{1}$ is convex. 
Moreover, if the rank of $T(s)$ is $2$ and $A(s)\circ T(s)\neq 0$, then $\ddot u(s)>0$.
\end{prop}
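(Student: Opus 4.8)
The plan is to work pointwise in a smoothly varying orthonormal frame along the path $T(s)$ and reduce the problem to a one-dimensional convexity statement via the singular values of $T(s)$. First I would observe that $\|T(s)\|_1 = \tr\sqrt{T(s)^\ast T(s)}$ is the sum of the singular values $\sigma_1(s) \le \sigma_2(s)$ of $T(s)$. The main point is that, away from the locus where $T(s)$ drops rank, the singular values are smooth functions of $s$, and one can choose smooth orthonormal frames $(v_1(s), v_2(s))$ of $V$ and $(w_1(s), w_2(s), w_3(s))$ of $W$ such that $T(s) v_j(s) = \sigma_j(s) w_j(s)$. Because $\|T(s)\|_1$ is a convex function of the matrix entries of $T(s)$ (being a norm), it is automatically convex along any \emph{affine} path; the content here is that it remains convex along paths with $\ddot T = A\circ T$ with $A \ge 0$, which is the hyperbolic-space analogue of affine. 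I would handle the rank-deficient case separately at the end, using that $\|\cdot\|_1$ is a norm hence convex along the straight segment tangent to $T$ there, together with a continuity/limiting argument.

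The key computation is the second derivative of $u(s) = \sigma_1(s) + \sigma_2(s)$. Differentiating $T^\ast T$ and using the eigenvalue perturbation formulas, I would express $\dot\sigma_j$ and $\ddot\sigma_j$ in terms of $\langle \dot T v_j, w_j\rangle$, $\langle \ddot T v_j, w_j\rangle$, and off-diagonal terms $\langle \dot T v_j, w_k\rangle$ for $k\ne j$. The crucial input is $\ddot T = A \circ T$, so that $\langle \ddot T v_j, w_j \rangle = \sigma_j \langle A w_j, w_j\rangle \ge 0$ since $A$ is positive self-adjoint. Schematically, one gets
\[
\ddot\sigma_j = \sigma_j \langle A w_j, w_j\rangle + (\text{nonnegative off-diagonal contributions}),
\]
where the off-diagonal terms have the form $\sum_{k\ne j} \frac{|\langle \dot T v_j, w_k\rangle - \text{(coupling)}|^2}{\sigma_j \pm \sigma_k}$ and are manifestly $\ge 0$ when $\sigma_1, \sigma_2 > 0$ (this is where $\dim W = 3 > 2 = \dim V$ helps: the third direction $w_3$ contributes a term with denominator $\sigma_j - 0 = \sigma_j > 0$). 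Summing over $j$ gives $\ddot u(s) \ge 0$, and the strict inequality when $\operatorname{rank} T(s) = 2$ and $A(s)\circ T(s) \ne 0$ follows because then some $\langle A w_j, w_j\rangle > 0$ while $\sigma_j > 0$.

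I expect the main obstacle to be organizing the off-diagonal second-order terms so that their nonnegativity is transparent, and in particular correctly treating the degenerate case $\sigma_1 = \sigma_2$ (where the frame is no longer uniquely determined, but $\sigma_1 + \sigma_2$ is still smooth) and the boundary case $\sigma_1 = 0$. For the latter I would argue by approximation: perturb $T$ slightly so that it has full rank on an open dense set of times, apply the nondegenerate estimate, and pass to the limit using that $\|\cdot\|_1$ is continuous and that convexity is a closed condition. An alternative, perhaps cleaner, route I would consider is to bypass singular values entirely: write $\|T\|_1 = \sup \{ \tr(L^\ast T) : \|L\|_{\mathrm{op}} \le 1\}$ as a supremum of linear functionals $s \mapsto \tr(L^\ast T(s))$, each of which satisfies $\frac{d^2}{ds^2}\tr(L^\ast T(s)) = \tr(L^\ast A(s) T(s))$ — but this is \emph{not} obviously nonnegative for a general contraction $L$, so one still needs the structural argument above at the point where the supremum is attained. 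For this reason I would keep the singular-value approach as the backbone and use the variational characterization only as a cross-check.
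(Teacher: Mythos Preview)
Your main approach has a genuine gap: the claim that the off-diagonal contributions to $\ddot\sigma_j$ are individually nonnegative is false. Second-order perturbation theory for the eigenvalues of $T^\ast T$ produces cross-terms with denominators $\sigma_j^2-\sigma_k^2$, which is \emph{negative} for $j=1$, $k=2$. Equivalently, the smaller singular value $\sigma_1$ is \emph{not} a convex function on $\Hom(V,W)$ (only the Ky Fan partial sums are), so $\ddot\sigma_1\geq 0$ can fail even along affine paths. One can hope for cancellation after summing $\ddot\sigma_1+\ddot\sigma_2$, but the conversion from $(\sigma_j^2)''$ to $\sigma_j''$ introduces different weights $1/\sigma_j$, so the cross-terms do not simply cancel; organizing what remains into something manifestly nonnegative is exactly the hard part, and your sketch does not indicate how to do it. The degenerate case $\sigma_1=\sigma_2$ compounds this, since the perturbation formulas you invoke break down there.

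The paper sidesteps these difficulties by a cleaner decomposition. It introduces the smooth regularizations $q_\epsilon(L)=\tr\sqrt{\epsilon^2\Id+L^\ast L}$ and writes
\[
\ddot u_\epsilon(s)=(d^2 q_\epsilon)_{T(s)}(\dot T(s),\dot T(s))\;+\;(dq_\epsilon)_{T(s)}(\ddot T(s)).
\]
The first term is $\geq 0$ because $q_\epsilon$ is a convex function on $\Hom(V,W)$ (this absorbs all the troublesome off-diagonal $\dot T$-terms at once, without ever separating them by singular value). The second term equals $\frac{d}{dt}q_\epsilon\big((\Id+tA(s))T(s)\big)\big|_{t=0}$, which is shown directly to be $\geq 0$ (and $>0$ when $A\circ T\neq 0$) by an explicit formula in terms of $\hat A=2L^\ast A L\geq 0$. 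Convexity of $u$ then follows by letting $\epsilon\to 0$, and strict positivity at full-rank points is read off at $\epsilon=0$. Your alternative variational idea $\|T\|_1=\sup_{\|L\|_{\mathrm{op}}\leq 1}\tr(L^\ast T)$ is in fact closer to this: the supremum handles convexity in $\dot T$, and at the maximizing $L$ (the partial isometry from the polar decomposition) the term $\tr(L^\ast A T)$ \emph{is} nonnegative---this is essentially the paper's second computation. You were right to suspect that route was viable; it is the singular-value-by-singular-value bookkeeping that does not work as stated.
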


Before proving Proposition \ref{lm:convexity}, let us introduce 
suitable regularized
versions of the $1$-Schatten norm and study their properties.\\

Let $V,W$ be real vector spaces endowed with positive-definite scalar products,
of dimension $2$ and $3$ respectively. For any $\epsilon\geq 0$ and for $L\in\Hom(V,W)$
the {\it{$\epsilon$-regularized $1$-Schatten norm}} of $L$ is
\[
   q_\epsilon(L):=\tr\sqrt{\epsilon^2\Id+L^*L}
\]
where $L^*$ is the adjoint of $L$.
Notice that $q_0$ coincides with the $1$-Schatten norm.

\begin{lemma}[Basic properties of regularized $1$-Schatten norms]\label{lm:sh-pr}
The $\epsilon$-regularized $1$-Schatten norm $q_\epsilon:\Hom(V,W)\rightarrow\RR$ satisfies the following properties.
\begin{itemize}
\item[(a)] For any $L\in\Hom(V,W)$,
\begin{equation}\label{eq:sh-ex}
q_\epsilon(L)=\sqrt{\tr(\epsilon^2\Id+ L^*L)+2\sqrt{\det(\epsilon^2 \Id+ L^*L)}}~.
\end{equation}
As a consequence, $q_\epsilon$ is smooth for $\epsilon>0$.
\item[(b)]
The function $q_\epsilon$ is convex for any $\epsilon\geq 0$.
\item[(c)]
Let $L:V\rar W$ be a linear map,
$A:W\rar W$ be nonnegative self-adjoint and let
$\epsilon\geq 0$. In the case $\epsilon=0$, suppose furthermore that $L$ has rank $2$. 
Then 
\[
   \frac{d\,}{dt}q_\epsilon((1+tA)L)\Big|_{t=0}\geq 0~.
\]
Moreover the strict inequality holds if $A\circ L\neq 0$.
\end{itemize}
\end{lemma}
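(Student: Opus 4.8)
The three parts are essentially independent, and I would prove them in that order.

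\emph{Part (a).} The key remark is that $M:=\epsilon^2\Id+L^*L$ is a self-adjoint endomorphism of the \emph{two}-dimensional space $V$, so it has nonnegative eigenvalues $\mu_1,\mu_2$ with $\tr M=\mu_1+\mu_2$ and $\det M=\mu_1\mu_2$. Since $\sqrt M$ has eigenvalues $\sqrt{\mu_1},\sqrt{\mu_2}$, we get $q_\epsilon(L)=\tr\sqrt M=\sqrt{\mu_1}+\sqrt{\mu_2}$, and squaring gives $q_\epsilon(L)^2=\mu_1+\mu_2+2\sqrt{\mu_1\mu_2}=\tr M+2\sqrt{\det M}$; taking the nonnegative square root yields \eqref{eq:sh-ex}. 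For $\epsilon>0$, the functions $L\mapsto\tr M$ and $L\mapsto\det M$ are polynomials in the entries of $L$, and $\mu_i\ge\epsilon^2$ forces $\tr M+2\sqrt{\det M}\ge 4\epsilon^2>0$, so both nested square roots are applied to strictly positive smooth functions; hence $q_\epsilon$ is smooth. I expect no difficulty here.

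\emph{Part (b).} Instead of estimating directly I would use a dilation. Let $\widehat W:=W\oplus V$ carry the orthogonal direct sum scalar product, and for $L\in\Hom(V,W)$ let $\widehat L\in\Hom(V,\widehat W)$ be defined by $\widehat L v:=(Lv,\epsilon v)$. Then $\widehat L^*\widehat L=L^*L+\epsilon^2\Id_V$, so $q_\epsilon(L)=\tr\sqrt{\widehat L^*\widehat L}=\|\widehat L\|_1$ is literally the $1$-Schatten norm of $\widehat L$. Since $L\mapsto\widehat L$ is an affine map $\Hom(V,W)\to\Hom(V,\widehat W)$ and $\|\cdot\|_1$ is a genuine norm (Remark~\ref{rmk:1-schatten-nonsmooth}), hence convex, the composition $q_\epsilon$ is convex for every $\epsilon\ge0$.

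\emph{Part (c).} Here I would differentiate directly. With $L_t:=(\Id+tA)L$ we have $L_t^*L_t=L^*(\Id+tA)^2L$, so $\dot M_0:=\frac{d}{dt}(\epsilon^2\Id+L_t^*L_t)\big|_{t=0}=2L^*AL$. Set $M_0:=\epsilon^2\Id+L^*L$; the hypotheses guarantee that $M_0$ is positive definite (automatically if $\epsilon>0$, and as a consequence of $\mathrm{rank}\,L=2$ if $\epsilon=0$), and when $\epsilon=0$ each $L_t$ keeps rank $2$ for $t$ small, so $q_\epsilon$ is smooth near $L$ by Remark~\ref{rmk:1-schatten-nonsmooth}. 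Spectral calculus, using $\frac{d}{dt}\tr f(M_t)=\tr(f'(M_0)\dot M_0)$ with $f(x)=\sqrt{x}$, then gives
\[
\frac{d}{dt}q_\epsilon(L_t)\Big|_{t=0}=\tr\!\big(\tfrac12 M_0^{-1/2}\dot M_0\big)=\tr\big(M_0^{-1/2}L^*AL\big).
\]
I would conclude by rewriting this trace as $\tr(CC^*)=\|C\|_{HS}^2\ge 0$ with $C:=A^{1/2}LM_0^{-1/4}$, which vanishes precisely when $A^{1/2}L=0$, i.e.\ when $AL=0$; this gives the inequality together with the strict case $AL\ne 0$. The only points needing care are the justification that $q_\epsilon$ is differentiable along the path $L_t$ — which is exactly why the rank-$2$ assumption is imposed when $\epsilon=0$ — and the positivity of $\tr(M_0^{-1/2}L^*AL)$, which in the end rests on the trace of a product of two positive semidefinite operators being nonnegative. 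Parts (a) and (b) are routine once one notices the reformulation in (b).
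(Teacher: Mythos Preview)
Your proof is correct in all three parts. Part (a) matches the paper's argument essentially verbatim, but in parts (b) and (c) you take genuinely different routes.

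For (b), the paper proves midpoint convexity by invoking a matrix inequality of Thompson (there exist isometries $P,Q$ with $b_{L+L'}\le P^*b_LP+Q^*b_{L'}Q$), then compares singular values via Weyl's theorem and a separate sublemma about the function $(t_1,t_2)\mapsto\sqrt{\epsilon^2+t_1^2}+\sqrt{\epsilon^2+t_2^2}$. Your dilation argument --- embedding $L$ into $\widehat L:V\to W\oplus V$, $v\mapsto(Lv,\epsilon v)$, so that $q_\epsilon(L)=\|\widehat L\|_1$ and convexity follows from the fact that a norm precomposed with an affine map is convex --- is shorter, avoids the external reference, and makes the convexity transparent. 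For (c), the paper differentiates the explicit two-dimensional formula \eqref{eq:sh-ex} and obtains the derivative as a combination of $\tr\hat A$ and $\tr(M_0^{-1}\hat A)$ with positive coefficients; your use of the trace identity $\frac{d}{dt}\tr f(M_t)=\tr\!\big(f'(M_0)\dot M_0\big)$ bypasses that formula entirely and gives the cleaner expression $\tr(M_0^{-1/2}L^*AL)$, whose nonnegativity and equality case you read off from $\tr(CC^*)=\|C\|_{HS}^2$. The paper's approach has the advantage of using only the elementary formula from (a); yours is dimension-independent and isolates the mechanism (trace of a product of positive operators) more clearly. The one point a reader might pause on is the differentiation-under-the-trace identity for $f(x)=\sqrt{x}$, but since $M_0$ is positive definite this is standard.
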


Before proving the lemma, we mention the following observation.

\begin{sublemma}\label{sublemma:N}
Let $\epsilon\geq 0$.
Consider the planar domain $\Omega=\{\bm{t}=(t_1,t_2)\in\RR^2| t_2\geq t_1\geq 0\}$ and
define $n_\epsilon:\Omega\rar\RR$ by
$n_\epsilon(t_1,t_2):=\sqrt{\epsilon^2+t_1^2}+\sqrt{\epsilon^2+t_2^2}$.
Then, $n_\epsilon$ is convex. Moreover,
given $(t_1,t_2), (t'_1,t'_2)\in\Omega$ such that $t_2\leq 2'_1$ and $t_1+t_2\leq t'_1+t'_2$, we have $n_\epsilon(t_1,t_2)\leq n_\epsilon(t'_1,t'_2)$.
\end{sublemma}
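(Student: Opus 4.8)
The plan is to reduce everything to one–variable calculus by separating variables: write $n_\epsilon(t_1,t_2)=g(t_1)+g(t_2)$, where $g(s):=\sqrt{\epsilon^2+s^2}$, and then read off both assertions from the convexity and monotonicity of the single scalar function $g$.

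For the convexity of $n_\epsilon$, I would first check that $g$ is convex on all of $\RR$: when $\epsilon>0$ this is just $g''(s)=\epsilon^2(\epsilon^2+s^2)^{-3/2}\ge 0$, and when $\epsilon=0$ one has $g=|\cdot|$, which is convex as well. A finite sum of convex functions each depending on a single coordinate is convex on $\RR^2$, and $\Omega$ is a convex subset of $\RR^2$, so the restriction $n_\epsilon|_\Omega$ is convex. That disposes of the first half with no real computation.

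For the monotonicity statement I would use that $g$ is nondecreasing on $[0,\infty)$, since there $g'(s)=s(\epsilon^2+s^2)^{-1/2}\ge 0$ (this stays valid at $\epsilon=0$). The elementary point is that the hypotheses chain up: from $(t_1,t_2)\in\Omega$ we get $t_1\le t_2$, the assumption gives $t_2\le t_1'$, and from $(t_1',t_2')\in\Omega$ we get $t_1'\le t_2'$, so altogether $0\le t_1\le t_2\le t_1'\le t_2'$; in particular $t_1\le t_1'$ and $t_2\le t_2'$, all four numbers lying in $[0,\infty)$. Applying monotonicity of $g$ coordinatewise and adding gives $n_\epsilon(t_1,t_2)\le n_\epsilon(t_1',t_2')$. (Under this chain the condition $t_1+t_2\le t_1'+t_2'$ is automatic and plays no role; if instead one prefers to argue only from $t_2\le t_2'$ together with $t_1+t_2\le t_1'+t_2'$, one can maximize the convex function $n_\epsilon$ over the bounded polytope $P=\{(t_1,t_2):0\le t_1\le t_2\le t_2',\ t_1+t_2\le t_1'+t_2'\}$, which contains $(t_1,t_2)$; its vertices are $(0,0)$, $(0,t_2')$, $(m,m)$ with $m=(t_1'+t_2')/2$, and $(t_1',t_2')$, and one checks $n_\epsilon\le n_\epsilon(t_1',t_2')$ at each of them, the only nontrivial case $(m,m)$ being exactly midpoint convexity of $g$.)

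There is essentially no serious obstacle here: the whole statement rests on convexity and monotonicity of $s\mapsto\sqrt{\epsilon^2+s^2}$, together with the ordering built into $\Omega$. The only points worth keeping in mind are that $\epsilon=0$ must be allowed, where $g$ is still convex and nondecreasing on $[0,\infty)$ although no longer smooth, and that these two properties of $n_\epsilon$ are exactly what will be fed, through the identification $q_\epsilon(L)=n_\epsilon(\sigma_1,\sigma_2)$ with $\sigma_1\le\sigma_2$ the singular values of $L$, into the proof of Lemma \ref{lm:sh-pr}.
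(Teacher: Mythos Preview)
Your proof is correct, and you were wise to include the parenthetical alternative. The typo ``$t_2\le 2'_1$'' in the statement is meant to be $t_2\le t'_2$, not $t_2\le t'_1$: this is what the paper's own proof establishes, and it is what is actually used in the application to Lemma~\ref{lm:sh-pr}(b), where one only has $\hat\mu_2\le \lambda_2+\lambda'_2=t'_2$ from Weyl's inequality, not $\hat\mu_2\le t'_1$. Under your primary reading $t_2\le t'_1$ the chain $t_1\le t_2\le t'_1\le t'_2$ does indeed make the sum condition redundant and the conclusion immediate from coordinatewise monotonicity---but this proves a statement too weak for the intended application. Your parenthetical polytope argument, however, handles the correct hypothesis $t_2\le t'_2$ and is fully valid.

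The paper takes a different route for this second part: it uses the tangent-plane inequality $n_\epsilon(\bm{t}')\ge n_\epsilon(\bm{t})+\nabla n_\epsilon(\bm{t})\cdot(\bm{t}'-\bm{t})$ from convexity, then observes that on $\Omega$ one has $\partial_{t_2}n_\epsilon\ge\partial_{t_1}n_\epsilon\ge 0$ (since $g'$ is nondecreasing), so that $\partial_{t_1}n_\epsilon\,(t'_1-t_1)+\partial_{t_2}n_\epsilon\,(t'_2-t_2)\ge \partial_{t_1}n_\epsilon\,\big((t'_1+t'_2)-(t_1+t_2)\big)\ge 0$. Your vertex-maximization argument is arguably more elementary (no differentiability needed, and the $\epsilon=0$ case requires no separate comment), while the paper's gradient argument is shorter and more transparently displays where each hypothesis enters. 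Both are fine; just be aware that your main argument should be the polytope one.
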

\begin{proof}
Clearly, the function $t\mapsto \sqrt{\epsilon^2+t^2}$ is increasing and convex
and so $n_\epsilon$ is convex too.
It follows that $n_\epsilon(t'_1,t'_2)\geq n_\epsilon(t_1,t_2)+
(\pa_{t_1}n_\epsilon)_{\bm{t}}(t'_1-t_1)+(\pa_{t_2}n_\epsilon)_{\bm{t}}(t'_2-t_2)$.
Now,
the simple and key remark is that  $(\partial_{t_2} n_\epsilon)_{\bm{t}}\geq (\partial_{t_1} n_\epsilon)_{\bm{t}}\geq 0$ for every $\bm{t}\in\Omega$. 
Thus, 
$ (\pa_{t_1}n_\epsilon)_{\bm{t}}(t'_1-t_1)+(\pa_{t_2}n_\epsilon)_{\bm{t}}(t'_2-t_2)
\geq (\pa_{t_1}n_\epsilon)_{\bm{t}}(t'_1-t_1+t'_2-t_2)\geq 0 $ and the conclusion follows.
\end{proof}

The relevance of the above sublemma
relies on the fact that, given $L\in\Hom(V,W)$, we have that $q_\epsilon(L)=n_\epsilon(\lambda_1,\lambda_2)$, where $\lambda_1\leq\lambda_2$ are the singular values of $L$.

\begin{proof}[Proof of Lemma \ref{lm:sh-pr}]
If $\lambda_1,\lambda_2$ are the singular values of $L$, that is, the eigenvalues of $b_L=\sqrt{L^*L}$,
then
\[
   q_\epsilon(L)=\sqrt{\epsilon^2+\lambda_1^2}+\sqrt{\epsilon^2+\lambda_2^2}\,.
\]
So $q_\epsilon(L)^2=2\epsilon^2+\lambda_1^2+\lambda_2^2+2\sqrt{(\epsilon^2+\lambda_1^2)(\epsilon^2+\lambda_2^2)}$.
As the eigenvalues of $\epsilon^2\Id+L^*L$ are $\epsilon^2+\lambda_1^2$ and $\epsilon^2+\lambda_2^2$, the identity \eqref{eq:sh-ex} is immediately verified.
Moreover, $q_\epsilon(L)^2$ can be written as
$q_\epsilon(L)^2=2\epsilon^2+\tr(L^*L)+2\sqrt{\epsilon^4+2\epsilon^2\tr(L^*L)+\det(L^*L)}$,
which shows that $q_\epsilon$ is smooth for $\epsilon>0$. This proves (a).

In order to prove (b), note that $q_\epsilon$ is continuous and so it is enough
to show that $2q_\epsilon(L+L')\leq q_\epsilon(2L)+q_\epsilon(2L')$ for all $L,L'\in\Hom(V,W)$.
Consider then
$L, L'\in\Hom(V,W)$ and denote by $\lambda_1\leq\lambda_2$
the singular values of $L$, by
$\lambda'_1\leq \lambda'_2$
the singular values of $L'$ and by
$\mu_1\leq \mu_2$ the singular values of $L+L'$.
By Theorem 2 of \cite{thompson:convex} there are linear isometries $P,Q:V\rar V$  such that
\[
     b_{L+L'}\leq  P^* b_L P+ Q^* b_{L'} Q
\]
where $\leq$ means that the difference is a nonnegative self-adjoint matrix.

If $\hat{\mu}_1\leq\hat{\mu}_2$ are the eigenvalues of 
$P^* b_L P+Q^* b_{L'}Q$, we deduce that $\mu_i\leq\hat{\mu}_i$
for $i=1,2$. Thus $q_\epsilon(L+L')\leq n_\epsilon(\hat{\mu}_1, \hat{\mu}_2)$. 
Moreover, $\hat{\mu}_1+\hat{\mu}_2=\tr(b_{L+L'})\leq \tr(P^* b_L P+Q^* b_{L'}Q)=\tr(b_L)+\tr(b_{L'})=\lambda_1+\lambda'_1+\lambda_2+\lambda'_2$.
On the other hand, by the classical Weyl's Theorem, $\hat{\mu}_2\leq \lambda_2+\lambda'_2$.
Hence, using Sublemma \ref{sublemma:N}, we get
\[
  2q_\epsilon(L+L')\leq 2n_\epsilon(\hat{\mu}_1,\hat{\mu}_2)\leq 2 n_\epsilon(\lambda_1+\lambda'_1,\lambda_2+\lambda'_2)\leq n_\epsilon(2\lambda_1, 2\lambda_2)+ n_\epsilon(2\lambda'_1,2\lambda'_2)= q_\epsilon(2L)+q_\epsilon(2L')\,,
\]
which shows that $q_\epsilon$ is convex.

As for (c), note that $t\mapsto q_\epsilon((1+tA)L)$ is a smooth function near $t=0$.
For $\epsilon>0$ this is clear, because $q_\epsilon$ is smooth.
For $\epsilon=0$ this depends on the fact that $L$ has rank $2$
and so has $(1+tA)L$ for $|t|$ small.
A straightforward computation using \eqref{eq:sh-ex} shows that
\begin{equation}\label{eq:pos}
   \frac{d\,}{dt}q_\epsilon((1+tA)L)\Big|_{t=0}=\frac{1}{2q_\epsilon(L)}\left( \tr(\hat{A})+\sqrt{\det(\epsilon^2\Id+L^*L)}\tr((\epsilon^2\Id+L^*L)^{-1}\hat{A})\right)~,
\end{equation}
where we have put
\[
  \hat{A}:=\frac{d\,}{dt}\Big(((\Id+tA)L)^*(\Id+tA)L\Big)\Big|_{t=0}=2L^*AL~.
\]
Now, $\hat{A}$ and $(\epsilon^2\Id+L^*L)$ are respectively non-negative and positive self-adjoint operators of $V$, so the  derivative in \eqref{eq:pos} is non-negative.
Finally, if $A\circ L\neq 0$, then $\hat{A}\neq 0$, and this implies the strict positivity of the derivative.
\end{proof}

After such preparation, we can now prove the main statement of this section.

\begin{proof}[Proof of Proposition \ref{lm:convexity}]
For $\epsilon\geq 0$ let $u_\epsilon(s)=q_\epsilon(T(s))$,
so that $u_0=u$.
We remark that for $\epsilon>0$ the function $u_\epsilon$ is smooth at every $s\in[0,1]$, whereas for $\epsilon=0$ it is smooth at those $s\in[0,1]$ such that
$T(s)$ has rank $2$.
At those points we have
\[
\ddot u_\epsilon (s)= (d^2 q_\epsilon)_{T(s)}(\dot T(s), \dot T(s))+(dq_\epsilon)_{T(s)}(\ddot T(s))
\]
where $(d^2 q_\epsilon):V\times V\rar W$ is the Hessian of $q_\epsilon$
and
we view $\ddot{u}_\epsilon(s)$, $\dot{T}(s)$ and $\ddot{T}(s)$ as elements of $\Hom(V,W)$.

As $q_\epsilon$ is convex, $(d^2 q_\epsilon)_{T(s)}(\dot T(s), \dot T(s))\geq 0$.
Moreover, $(dq_\epsilon)_{T(s)}(\ddot T(s))=\frac{d}{dt}q_\epsilon
\Big((1+tA(s))T(s)\Big)\Big|_{t=0}$. Hence,
by Lemma \ref{lm:sh-pr}(c) the term $(dq_\epsilon)_{T(s)}(\ddot T(s))$ is non-negative, and in fact strictly positive  if $A(s)\circ T(s)\neq 0$.

For $\epsilon=0$ this shows the positivity of $\ddot u$ at those $s\in[0,1]$ such that the rank of $T(s)$ is $2$ and $A(s)\circ T(s)\neq 0$.
On the other hand, for $\epsilon>0$ the function $u_\epsilon$ turns out to be convex.  The convexity of $u$ follows, since $u=\lim_{\epsilon\to 0}u_\epsilon$.
\end{proof}




\bibliographystyle{amsplain}
\bibliography{biblio}

\end{document}